\newtheorem{theorem}{Theorem}[section]
\newtheorem{lemma}[theorem]{Lemma}
\newtheorem{proposition}[theorem]{Proposition}
\theoremstyle{definition}
\newtheorem{definition}[theorem]{Definition}
\theoremstyle{remark}
\newtheorem{remark}[theorem]{Remark}
\numberwithin{equation}{section}
\def\rnum#1{\expandafter{\romannumeral #1}} 
\def\Rnum#1{\uppercase\expandafter{\romannumeral #1}}
\newcommand{\N}{{\mathbb{N}}} 
\newcommand{\Z}{{\mathbb{Z}}} 
\newcommand{\R}{{\mathbb{R}}} 
\newcommand{\C}{{\mathbb{C}}} 
\def\~#1{\widetilde #1}
\def\({\left(}
\def\){\right)}
\def\[{\left[}
\def\]{\right]}
\def\<{\left\langle}
\def\>{\right\rangle}
\begin{document}

\title[Nonlinear Schr\"odinger equation with the inverse-square potential]{Scattering and blow-up dichotomy of the energy-critical nonlinear Schr\"odinger equation with the inverse-square potential}


\author{Masaru Hamano}
\address{Faculty of Science and Engineering, Waseda University 3-4-1 Okubo, Shinjuku-ku, Tokyo 169-8555, JAPAN}
\email{m.hamano3@kurenai.waseda.jp}

\author{Masahiro Ikeda}
\address{Center for Advanced Intelligence Project, Riken, Japan / Department of Mathematics, Faculty of Science and Technology, Keio University, 3-14-1 Hiyoshi, Kohoku-ku, Yokohama, 223-8522, Japan}
\email{masahiro.ikeda@riken.jp / masahiro.ikeda@keio.jp}







\begin{abstract}
In this paper, we consider the energy critical nonlinear Schr\"odinger equation with a repulsive inverse square potential.
In particular, we deal with radial initial data, whose energy is equal to the energy of static solution to the corresponding nonlinear Schr\"odinger equation without a potential.
We investigate time behavior of the radial solutions with such initial data.
\end{abstract}

\maketitle

\tableofcontents


\section{Introduction}

Our problem is
\begin{align}\tag{NLS$_\gamma$} \label{NLS}
	i\partial_tu + \Delta_\gamma u
		= - |u|^{\frac{4}{d-2}}u, \quad (t,x) \in \R \times \R^d,
\end{align}
where $d \geq 3$, $\gamma > - (\frac{d-2}{2})^2$, $\Delta_\gamma = \Delta - \frac{\gamma}{|x|^2}$, and $u : \R \times \R^d \longrightarrow \C$ is an unknown function.
In particular, we consider the Cauchy problem of \eqref{NLS} with initial data
\begin{align}\tag{IC} \label{IC}
	u(0,x) = u_0(x).
\end{align}
Let $- \Delta_\gamma^0$ denote the natural action of $- \Delta + \frac{\gamma}{|x|^2}$ on $C_c^\infty(\R^d \setminus \{0\})$, where $C_c^\infty(\R^d \setminus \{0\})$ is a set of infinitely differentiable functions with a compact support on $\R^d \setminus \{0\}$.
Since $- \Delta_\gamma^0$ satisfies
\begin{align}
	\<-\Delta_\gamma^0 f, f\>_{L^2}
		= \int_{\R^d}\[|\nabla f(x)|^2 + \frac{\gamma}{|x|^2}|f(x)|^2\]dx
		= \int_{\R^d}\left|\nabla f(x) + \frac{\sigma}{|x|^2}xf(x)\right|^2dx \label{124}
\end{align}
for $\gamma > - (\frac{d-2}{2})^2$ and any $f \in C_c^\infty(\R^d \setminus \{0\})$, where $\sigma := \frac{d-2}{2} - \frac{1}{2}\sqrt{(d-2)^2 + 4\gamma}$, $- \Delta_\gamma^0$ is a positive semi-definite symmetric operator for $\gamma > - (\frac{d-2}{2})^2$.
We consider $- \Delta_\gamma$ as the self-adjoint extension of $-\Delta_\gamma^0$.
However, it is known that the extension is not unique for $- (\frac{d-2}{2})^2 < \gamma < - (\frac{d-2}{2})^2 + 1$ (see \cite{KalSchWalWus75}).
In this case, we take $- \Delta_\gamma$ as the Friedrichs extension among possible extensions.
Since the operators $- \Delta_\gamma$ and $1 - \Delta_\gamma$ are non-negative, these operators are well-defined on the domain
\begin{align*}
	H_\gamma^1(\R^d)
		:= \{f \in H^1(\R^d) : \|f\|_{H_\gamma^1} < \infty\}
\end{align*}
with a norm $\|f\|_{{H}_\gamma^1}^2 := \|(1-\Delta_\gamma)^\frac{1}{2}f\|_{L^2}^2 = \|f\|_{L^2}^2 + \|(-\Delta_\gamma)^\frac{1}{2}f\|_{L^2}^2$.
We notice that
\begin{align*}
	\|f\|_{\dot{H}_\gamma^1}^2
		:= \|(-\Delta_\gamma)^\frac{1}{2}f\|_{L^2}^2
		= \|\nabla f\|_{L^2}^2 + \int_{\R^d}\frac{\gamma}{|x|^2}|f(x)|^2dx.
\end{align*}

The solution $u$ formally satisfies the following mass and energy :
\begin{align}
	M[u(t)]
		& := \|u(t)\|_{L^2}^2
		= M[u_0], \label{122} \\
	E_\gamma [u(t)]
		& := \frac{1}{2}\|u(t)\|_{\dot{H}_\gamma^1}^2 - \frac{1}{2^\ast}\|u(t)\|_{L^{2^\ast}}^{2^\ast}
		= E_\gamma[u_0], \label{123}
\end{align}
where $2^\ast := \frac{2d}{d-2}$.

Here, we consider \eqref{NLS} with $\gamma = 0$, that is,
\begin{align}\tag{NLS$_0$} \label{NLS0}
	i\partial_t u + \Delta u
		= -|u|^\frac{4}{d-2}u, \qquad (t,x) \in \R \times \R^d.
\end{align}
\eqref{NLS0} is invariant under the scale transformation :
\begin{align*}
	u(t,x)
		\mapsto u_\lambda(t,x) := \lambda^\frac{d-2}{2}u(\lambda^2t,\lambda x)
\end{align*}
for $\lambda > 0$.
In other words, if $u$ is a solution to \eqref{NLS0}, then $u_\lambda$ is also a solution to \eqref{NLS0}.
(To tell the truth, \eqref{NLS} is also invariant under the same scale transformation.)
The scale transformation change an initial data $u_0(x)$ to $\lambda^\frac{d-2}{2}u_0(\lambda x)$.
For the changing, $\|\lambda^\frac{d-2}{2}u_0(\lambda \,\cdot\,)\|_{\dot{H}^1} = \|u_0\|_{\dot{H}^1}$ holds and hence, \eqref{NLS} (with nonlinear power $\frac{d+2}{d-2}$) is called energy critical.

We turn to time behavior of the solution $u$ to \eqref{NLS}.
We define scattering and blow-up to \eqref{NLS}.

\begin{definition}[Scattering and blow-up]
Let $T_{\max}$ denote the maximal forward (resp. backward) existence time of a solution $u$ to \eqref{NLS}.
\begin{itemize}
\item (Scattering)
A solution $u$ to \eqref{NLS} scatters in positive (resp. negative) time if $T_{\max} = \infty$ (resp. $T_{\min} = -\infty$) and there exist $\psi_+ \in H^1(\R^d)$ (resp. $\psi_- \in H^1(\R^d)$) such that
\begin{align*}
	\lim_{t\rightarrow\infty}\|u(t) - e^{it\Delta_\gamma}\psi_+\|_{H_x^1}
		= 0
	\quad
	\(\text{resp. }
	\lim_{t\rightarrow-\infty}\|u(t) - e^{it\Delta_\gamma}\psi_-\|_{H_x^1}
		= 0\).
\end{align*}
If $u$ scatters in positive and negative time, then we say for simplicity that $u$ scatters.
\item (Blow-up)
A solution $u$ to \eqref{NLS} blows up in positive (resp. negative) time if $T_{\max} < \infty$ (resp. $T_{\min} > - \infty$).
If $u$ blows up in positive and negative time, then we say for simplicity that $u$ blows up.
\end{itemize}
\end{definition}

\eqref{NLS} has also the following explicit static solution
\begin{align}
	W_\gamma(x)
		= [d(d-2)\beta^2]^\frac{d-2}{4}\left(\frac{|x|^{\beta-1}}{1+|x|^{2\beta}}\right)^\frac{d-2}{2}, \label{125}
\end{align}
where $\beta = \sqrt{1+\frac{4\gamma}{(d-2)^2}}$.
That is, $W_\gamma$ satisfies
\begin{align}\tag{SP} \label{SP}
	- \Delta_\gamma W_\gamma
		= |W_\gamma|^\frac{4}{d-2}W_\gamma.
\end{align}
For $\gamma \in (-\frac{(d-2)^2}{4},0]$, $W_\gamma$ is characterized by the next Sharp Sobolev embedding.

\begin{proposition}[Sharp Sobolev embedding, \cite{KilMiaVisZhaZhe17}]\label{Sharp Sobolev embedding}
Sharp Sobolev embedding :
\begin{align}\label{001}
	\|f\|_{L^{2^\ast}}
		\leq C_S(\gamma)\|f\|_{\dot{H}_\gamma^1}
\end{align}
holds for any $f \in \dot{H}^1(\R^d)$, where $C_S(\gamma)$ denotes the best constant, that is,
\begin{align*}
	C_S(\gamma)
		:= \sup_{f \in H_\gamma^1(\R^d) \setminus \{0\}}\frac{\|f\|_{L^{2^\ast}}}{\|f\|_{\dot{H}_\gamma^1}}.
\end{align*}
If $\gamma \in(-\frac{(d-2)^2}{4},0]$, then the equality in \eqref{001} is attained by the solution $W_\gamma$.
On the other hand, if $\gamma > 0$, then $C_S(\gamma) = C_S(0)$ and the equality in \eqref{001} is never attained by non-trivial functions.
\end{proposition}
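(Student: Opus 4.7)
The plan is to treat the three claims separately, reducing each to the classical Sobolev inequality (the $\gamma = 0$ case, whose sharp constant is $C_S(0)$ and extremizer is $W_0$).

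For $\gamma \geq 0$, the pointwise estimate $\|f\|_{\dot{H}_\gamma^1}^2 = \|\nabla f\|_{L^2}^2 + \gamma\||x|^{-1}f\|_{L^2}^2 \geq \|\nabla f\|_{L^2}^2$ combined with classical Sobolev immediately gives \eqref{001} together with $C_S(\gamma) \leq C_S(0)$. For $-(\frac{d-2}{2})^2 < \gamma < 0$, the sharp Hardy inequality yields $\|f\|_{\dot{H}_\gamma^1}^2 \geq \bigl(1 + \tfrac{4\gamma}{(d-2)^2}\bigr)\|\nabla f\|_{L^2}^2$ with strictly positive coefficient, so \eqref{001} again follows from classical Sobolev; moreover $\|f\|_{\dot{H}_\gamma^1} \leq \|\nabla f\|_{L^2}$ in this regime forces $C_S(\gamma) \geq C_S(0)$.

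To establish the equality $C_S(\gamma) = C_S(0)$ when $\gamma > 0$, I would translate $W_0$: choose $x_n \in \R^d$ with $|x_n| \to \infty$ and set $f_n(x) := W_0(x - x_n)$. Then $\|f_n\|_{L^{2^\ast}}/\|f_n\|_{\dot{H}^1} = C_S(0)$ is preserved, and after the change of variable $y = x - x_n$ we have $\int |f_n(x)|^2/|x|^2\,dx = \int |W_0(y)|^2/|y + x_n|^2\,dy \to 0$ by dominated convergence. Hence $\|f_n\|_{\dot{H}_\gamma^1}/\|f_n\|_{\dot{H}^1} \to 1$, giving $C_S(\gamma) \geq C_S(0)$, and combined with the opposite bound above we obtain equality. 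The non-attainment assertion for $\gamma > 0$ is then a one-line contradiction: any nontrivial extremizer $f$ would satisfy $\|f\|_{L^{2^\ast}} = C_S(0)\|f\|_{\dot{H}_\gamma^1} > C_S(0)\|\nabla f\|_{L^2} \geq \|f\|_{L^{2^\ast}}$, the strict step coming from $\gamma \||x|^{-1} f\|_{L^2}^2 > 0$.

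The remaining and hardest part is attainment by $W_\gamma$ for $\gamma \in (-(\frac{d-2}{2})^2, 0]$. I would first verify directly from \eqref{125} that $W_\gamma$ solves \eqref{SP} (a radial ODE computation) and pair with $W_\gamma$ to obtain $\|W_\gamma\|_{\dot{H}_\gamma^1}^2 = \|W_\gamma\|_{L^{2^\ast}}^{2^\ast}$, which exhibits $W_\gamma$ as a critical point of the Sobolev ratio. Upgrading to maximality is the main obstacle: I would apply concentration-compactness to a maximizing sequence $(f_n)$ normalized by $\|f_n\|_{L^{2^\ast}} = 1$. Because $-\Delta_\gamma$ is covariant only under the dilations $f \mapsto \lambda^{(d-2)/2}f(\lambda\,\cdot\,)$ and not under translations, vanishing and dichotomy can be ruled out by combining the Brezis--Lieb lemma with the strict bound $C_S(\gamma) > C_S(0)$ (itself read off from $W_\gamma$), forcing concentration at the origin. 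The resulting weak limit is then a positive radial critical point and equals $W_\gamma$ up to the residual scaling, by uniqueness of positive radial solutions to \eqref{SP}.
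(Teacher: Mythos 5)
Note first that the paper does not supply a proof of Proposition \ref{Sharp Sobolev embedding} --- it is quoted from \cite{KilMiaVisZhaZhe17}, where it is obtained by the substitution $f(x)=|x|^{-\sigma}g(x)$ (with $\sigma$ as in \eqref{124}), which converts the $\dot{H}_\gamma^1$-Sobolev inequality into a weighted Hardy--Sobolev (Caffarelli--Kohn--Nirenberg) inequality with already-known sharp constant and extremizer. Your reconstruction is a different route; the elementary parts are fine but two steps have genuine gaps.

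The basic estimate \eqref{001}, the monotonicity of $C_S(\gamma)$ in the two regimes, and the non-attainment argument for $\gamma>0$ are all correct. The translation argument for $C_S(\gamma)=C_S(0)$ when $\gamma>0$ has the right idea, but ``by dominated convergence'' is not a valid justification: the integrand $|W_0(y)|^2|y+x_n|^{-2}$ has a singularity at $y=-x_n$ that moves with $n$, so there is no $n$-independent integrable majorant, and for $d=3,4$ you cannot even fall back on $W_0\in L^2$ (it fails). The decay $\int|W_0(y)|^2|y+x_n|^{-2}\,dy\to 0$ does hold, but must be proved by splitting the integral into $\{|y|\le|x_n|/2\}$, $\{|y+x_n|\le|x_n|/2\}$, and the remainder, using the pointwise bound $W_0(y)\lesssim\langle y\rangle^{2-d}$ and local integrability of $|z|^{-2}$ in $\R^d$ for $d\ge 3$; this yields a rate $O(|x_n|^{-1})$.

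The attainment claim for $\gamma\in\bigl(-(\tfrac{d-2}{2})^2,0\bigr]$ is the hard part, and your sketch is missing two essential ingredients. First, the strict gap $C_S(\gamma)>C_S(0)$ cannot be ``read off from $W_\gamma$'': knowing that $W_\gamma$ solves \eqref{SP} gives $\|W_\gamma\|_{\dot{H}_\gamma^1}^2=\|W_\gamma\|_{L^{2^*}}^{2^*}$, hence fixes the value of the Sobolev quotient at $W_\gamma$, but does not, without explicit computation, compare it to $C_S(0)$; this is precisely what you are trying to establish, so the reasoning is circular. The clean argument is to test the quotient at $W_0$: for $\gamma<0$ the potential term is strictly negative, so $\|W_0\|_{\dot{H}_\gamma^1}<\|\nabla W_0\|_{L^2}$ strictly, hence $\|W_0\|_{L^{2^*}}/\|W_0\|_{\dot{H}_\gamma^1}>C_S(0)$. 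Second, your concentration--compactness step is stated as though translation were not a symmetry to rule out, but extremizing sequences in $\dot H^1(\R^d)$ can escape by translation as well as by dilation. You should first reduce to radial functions, which is legitimate for $\gamma\le 0$ because symmetric-decreasing rearrangement preserves $\|\cdot\|_{L^{2^*}}$, decreases $\|\nabla\cdot\|_{L^2}$ (P\'olya--Szeg\H o), and increases $\int|\cdot|^2|x|^{-2}\,dx$ (Hardy--Littlewood), hence decreases $\|\cdot\|_{\dot{H}_\gamma^1}$ when $\gamma\le 0$. With radiality in hand, the radial profile decomposition of Lemma \ref{Linear profile decomposition} and the strict gap do force a single surviving profile at a bounded scale, yielding a nonzero radial extremizer; identifying it with $W_\gamma$ still requires uniqueness (up to scaling) of positive radial $\dot H^1$ solutions to \eqref{SP}, a nontrivial ODE fact that should be cited, not asserted.
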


\subsection{Main results}

By using the characterization of $W_\gamma$, the following known results are proved.
First, we list known results for \eqref{NLS0}.
It is known that time behavior of solutions to \eqref{NLS0} with initial data, whose energy is less than the energy of $W_0$.

\begin{theorem}[{Dodson \cite{Dod19}, Kenig--Merle \cite{KenMer06}, Killip--Visan \cite{KilVis10}}]
Let $d \geq 3$.
Assume that $u_0 \in \dot{H}^1(\R^d)$ satisfies $E_0[u_0] < E_0[W_0]$.
Then, the following hold :
\begin{itemize}
\item[(1)]
If $\|u_0\|_{\dot{H}^1} < \|W_0\|_{\dot{H}^1}$ and $u_0 \in \dot{H}_\text{rad}^1(\R^d)$ when $d = 3$, then the solution $u$ to \eqref{NLS0} with \eqref{IC} scatters.
\item[(2)]
If $\|u_0\|_{\dot{H}^1} > \|W_0\|_{\dot{H}^1}$ and ``$xu_0 \in L^2(\R^d)$ or $u_0 \in \dot{H}_\text{rad}^1(\R^d)$'', then the solution $u$ to \eqref{NLS0} with \eqref{IC} blows up.
\end{itemize}
\end{theorem}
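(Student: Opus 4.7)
The plan is to treat the two parts by complementary variational/dynamical methods, both built on the sharp Sobolev characterization of $W_0$ (Proposition \ref{Sharp Sobolev embedding} with $\gamma=0$) and the small-data Strichartz theory for \eqref{NLS0}.

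\textbf{Part (1): scattering below the ground state.} First I would establish the coercivity that the subthreshold condition propagates in time: from $E_0[u_0] < E_0[W_0]$ and $\|u_0\|_{\dot H^1} < \|W_0\|_{\dot H^1}$, using \eqref{001} to bound $\|u\|_{L^{2^\ast}}^{2^\ast}$ by a power of $\|u\|_{\dot H^1}^2$ and comparing with the Pohozaev identities for $W_0$, one shows $\|u(t)\|_{\dot H^1} \leq (1-\delta)\|W_0\|_{\dot H^1}$ for all $t$ in the maximal interval, together with an energy trapping of the form $\|\nabla u(t)\|_{L^2}^2 - \|u(t)\|_{L^{2^\ast}}^{2^\ast} \geq \delta\,\|\nabla u(t)\|_{L^2}^2$. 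This already gives a global $\dot H^1$ bound and reduces scattering to proving a global spacetime bound in the critical Strichartz norm $L_{t,x}^{2(d+2)/(d-2)}$. Next I would run the Kenig--Merle concentration-compactness/rigidity scheme: define
\begin{align*}
E_c := \sup\bigl\{ E_0 : \text{every solution with } E_0[v_0]<E_0 \text{ and } \|v_0\|_{\dot H^1}<\|W_0\|_{\dot H^1} \text{ scatters}\bigr\},
\end{align*}
assume for contradiction that $E_c < E_0[W_0]$, and extract via the Keraani-type linear profile decomposition for $e^{it\Delta}$ a critical element $u_c$: a global solution with energy exactly $E_c$ whose orbit $\{u_c(t)\}$ is precompact in $\dot H^1$ modulo the scaling (and, for $d\ge 4$, translation) symmetries of \eqref{NLS0}. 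Finally, rigidity rules out $u_c$. In the radial case ($d=3$) translations are frozen, so a localized Morawetz/virial estimate together with compactness forces $\|u_c\|_{\dot H^1}=0$, contradicting $E_c>0$; in higher dimensions one has to track a modulation parameter $x(t)$ and combine the long-time Morawetz estimate of Dodson with the frequency localization coming from compactness.

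\textbf{Part (2): blow-up above the ground state.} Here the starting point is the opposite trapping: from $E_0[u_0]<E_0[W_0]$ and $\|u_0\|_{\dot H^1}>\|W_0\|_{\dot H^1}$ one deduces $\|u(t)\|_{\dot H^1}>\|W_0\|_{\dot H^1}$ throughout the lifespan and the \emph{negative} coercive bound
\begin{align*}
\|\nabla u(t)\|_{L^2}^2 - \|u(t)\|_{L^{2^\ast}}^{2^\ast} \leq -\delta < 0,
\end{align*}
uniformly in $t$, obtained by a convexity argument on the function $y\mapsto \tfrac12 y - \tfrac{C_S(0)^{2^\ast}}{2^\ast} y^{2^\ast/2}$ at $y = \|\nabla u\|_{L^2}^2$. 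If $xu_0 \in L^2$, I would then apply the standard virial identity to $I(t)=\int |x|^2|u(t,x)|^2\,dx$, which gives $I''(t) = 8(\|\nabla u(t)\|_{L^2}^2 - \|u(t)\|_{L^{2^\ast}}^{2^\ast}) \leq -8\delta$; this forces $I$ to become negative in finite time, contradicting $I\geq 0$, so $T_{\max}<\infty$ and $T_{\min}>-\infty$. For radial $u_0\in\dot H^1$ without finite variance, I would instead run the Ogawa--Tsutsumi localized virial with a radial cutoff $\psi_R(x)\approx |x|^2$ for $|x|\le R$, truncated outside; the error terms are controlled using radial Sobolev embedding ($|x|^{(d-1)/2}|u(x)|\lesssim \|u\|_{\dot H^1}$ for radial $u$), which bounds the ``tail'' contributions by $o_R(1)$ relative to the $-\delta$ gap, and the same convexity-in-$t$ argument then produces finite-time blow-up.

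\textbf{Main obstacle.} The truly delicate step is the rigidity for the critical element in part (1): profile decomposition, the ``Palais--Smale condition modulo symmetries'' (which requires inverse Strichartz estimates and the non-existence of minimal blow-up solutions via nonlinear profile approximation), and then the Morawetz/virial rigidity; in the radial three-dimensional case one must additionally exploit radiality to compensate for the absence of a translation modulation. The blow-up part, by contrast, is essentially variational plus a direct virial computation, and the main subtlety there is the radial cutoff analysis when $xu_0 \not\in L^2$.
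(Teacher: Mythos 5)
This theorem is not proved in the paper; it is cited as a background result from Kenig--Merle, Killip--Visan, and Dodson. So there is no ``paper's own proof'' to compare against. Evaluating your sketch on its own terms: it is a faithful high-level account of the standard argument in those references. The variational trapping via the sharp Sobolev constant, the induction-on-energy/critical-element construction via linear profile decomposition, the rigidity through virial/Morawetz (radial for $d=3$, with the long-time Strichartz and translation modulation of Killip--Visan and Dodson for the non-radial higher-dimensional cases), and the super-threshold virial convexity for blow-up are all correctly identified, as is the localized Ogawa--Tsutsumi-type truncation for radial data without finite variance.

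One factual slip worth correcting: the pointwise radial decay you invoke, $|x|^{(d-1)/2}|u(x)|\lesssim\|u\|_{\dot H^1}$, is the Strauss inequality for $H^1_{\mathrm{rad}}$; for purely homogeneous $\dot H^1_{\mathrm{rad}}(\R^d)$ the scale-invariant bound is
\begin{align*}
\sup_{x\neq 0}\,|x|^{\frac{d-2}{2}}|u(x)| \;\lesssim\; \|u\|_{\dot H^1},
\end{align*}
which follows from integrating $\partial_r(r^{d-2}|u|^2)$ and Hardy's inequality, and this is the estimate that actually closes the tail errors in the localized virial without assuming $u_0\in L^2$ or finite variance. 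With that exponent corrected, the scaling in the remainder terms works out and your blow-up argument is sound. Also, to be precise about part (1), the radiality hypothesis in $d=3$ is what allows one to forgo the translation parameter in the compactness-modulo-symmetries step (Kenig--Merle), while the non-radial cases $d=4$ and $d\geq 5$ are the separate contributions of Dodson and Killip--Visan respectively; your sketch conflates these slightly, but the underlying strategy you describe is the right one in each case.
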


It is known that time behavior of solutions to \eqref{NLS0} with initial data, whose energy is equal to that of $W_0$.

\begin{theorem}[Campos--Farah--Roudenko \cite{CamFarRou22}, Duyckaerts--Merle \cite{DuyMer09}, Li--Zhang \cite{LiZha09}]
Let $d \geq 3$.
There exist two radial solutions $W_0^\pm \in \dot{H}^1(\R^d)$ to \eqref{NLS0} satisfying the following properties :
\begin{itemize}
\item[(1)]
$E_0[W_0^+] = E_0[W_0^-] = E_0[W_0]$ and
\begin{align*}
	\lim_{t\rightarrow \infty}\|W_0^\pm(t) - W_0\|_{\dot{H}^1}
		= 0.
\end{align*}
(We note that $W_0^\pm$ is defined on $[0,\infty)$.)
\item[(2)]
$\|W_0^+\|_{\dot{H}^1} < \|W_0\|_{\dot{H}^1}$ and $W_0^+$ scatters in negative time,
\item[(3)]
$\|W_0^-\|_{\dot{H}^1} > \|W_0\|_{\dot{H}^1}$ and if $d \geq 5$, then $W_0^-$ blows up in negative time.
\end{itemize}
\end{theorem}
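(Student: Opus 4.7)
The plan is to construct $W_0^\pm$ as small perturbations of $W_0$ driven by the unstable eigenmode of the linearized flow. First, I would linearize \eqref{NLS0} around $W_0$: writing $u = W_0 + h$, the perturbation satisfies a Schr\"odinger-type equation with potential terms $V_1 h + V_2 \overline h$ (with $V_j$ proportional to $|W_0|^{4/(d-2)}$) plus a quadratic remainder. Viewing $(h,\overline h)^{\top}$ as a vector, the linear part is generated by a non-self-adjoint matrix operator $\mathcal{L}$. A standard spectral analysis---using that $W_0$ is a Sobolev extremizer by Proposition~\ref{Sharp Sobolev embedding} (hence a saddle point of $E_0$ under the sharp constraint) together with Weyl's theorem---yields essential spectrum on $i\R$ and a simple pair of real eigenvalues $\pm e_0$ with $e_0>0$, with radial eigenfunctions $\mathcal{Y}_\pm$; the kernel of $\mathcal{L}$ captures only the phase and scaling symmetries of \eqref{NLS0}.

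The next step is to build, for each sign $\varepsilon\in\{\pm 1\}$, an approximate solution
\begin{equation*}
V_\varepsilon^k(t,x) = W_0(x) + \sum_{j=1}^k e^{-j e_0 t} A_j^\varepsilon(x), \qquad A_1^\varepsilon = \varepsilon\,\mathcal{Y}_+,
\end{equation*}
by solving inductively $(je_0-\mathcal{L}) A_j^\varepsilon = F_j(A_1^\varepsilon,\dots,A_{j-1}^\varepsilon)$ in a weighted Sobolev space; this is well-posed because $je_0$ lies in the resolvent set of $\mathcal{L}$ for $j\geq 2$. All $A_j^\varepsilon$ inherit radiality from $W_0$ and $\mathcal{Y}_+$. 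Writing a true solution as $u=V_\varepsilon^k + r$, I would then run a contraction argument for $r$ in a Banach space of radial $H^1$-valued functions decaying like $e^{-(k+1/2)e_0 t}$ on $[T_0,\infty)$, using Strichartz-type estimates for $\mathcal{L}$ projected off $\mathcal{Y}_+$ and tuning the initial data to kill the $e^{+e_0 t}$ mode. For $k$ and $T_0$ large, this yields a radial solution $W_0^\varepsilon$ on $[T_0,\infty)$ which extends to $[0,\infty)$ by local well-posedness, satisfying $\|W_0^\varepsilon(t)-W_0\|_{\dot H^1}\lesssim e^{-e_0 t}$ as $t\to\infty$.

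Passing mass and energy conservation to the limit $t\to\infty$ gives $E_0[W_0^\varepsilon]=E_0[W_0]$. Reading the sign of the $\dot H^1$-correction from $\|V_\varepsilon^1(t)\|_{\dot H^1}^2=\|W_0\|_{\dot H^1}^2+2\varepsilon e^{-e_0 t}\langle W_0,\mathcal{Y}_+\rangle_{\dot H^1}+O(e^{-2e_0 t})$, together with the non-degeneracy of $\langle W_0,\mathcal{Y}_+\rangle_{\dot H^1}$ (after normalizing $\mathcal{Y}_+$ so that this inner product is positive), gives $\|W_0^+(t)\|_{\dot H^1}<\|W_0\|_{\dot H^1}<\|W_0^-(t)\|_{\dot H^1}$ for large $t$; since $E_0$ is conserved and the sharp Sobolev equality case is exhausted, up to symmetries, by the $W_0$-orbit (Proposition~\ref{Sharp Sobolev embedding}), these strict inequalities persist on the full maximal interval. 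Hence $W_0^+$ lies permanently sub-threshold, so the quoted theorem of Dodson/Kenig--Merle/Killip--Visan gives scattering in negative time. For $W_0^-$, Glassey's virial identity on $\int |x|^2 |W_0^-(t,x)|^2\,dx$, combined with the coercive energy-trapping lower bound $\|W_0^-(t)\|_{\dot H^1}^2-\|W_0\|_{\dot H^1}^2\geq\delta>0$ and the fact that $xW_0\in L^2$ requires $d\geq 5$ (since $W_0(x)\sim|x|^{-(d-2)}$ at infinity), forces blow-up in negative time.

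The principal obstacle is the spectral and dispersive theory for $\mathcal{L}$---isolating the unstable eigenvalue $e_0$, ruling out embedded eigenvalues and resonances on $i\R$, and proving projected Strichartz estimates strong enough to sustain the contraction in the second step. Once those ingredients are available, the remaining pieces---the formal expansion, the asymptotic $\dot H^1$-comparison, and the final scattering/blow-up dichotomy via the sub-threshold theorem and a virial argument---proceed along now-classical lines.
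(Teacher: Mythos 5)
This theorem is cited in the paper (to Duyckaerts--Merle, Li--Zhang, and Campos--Farah--Roudenko) and is not proved there, so the comparison is against the standard proof rather than the paper's own. Your overall architecture---linearize at $W_0$, extract the real eigenvalue pair $\pm e_0$ of $\mathcal{L}$, build approximate solutions $W_0 + \sum_{j\geq 1} e^{-je_0 t}A_j^\varepsilon$ by solving $(je_0 - \mathcal{L})A_j^\varepsilon = F_j$ in a resolvent set, and close with a fixed-point argument in an exponentially weighted space---is indeed the Duyckaerts--Merle construction, and the variational argument that the sign of $\|W_0^\pm(t)\|_{\dot H^1} - \|W_0\|_{\dot H^1}$ cannot change (since equality would force $W_0^\pm(t_0)$ onto the $W_0$-orbit by the sharp Sobolev characterization) is correct.

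However, your treatment of items (2) and (3) contains two genuine errors. First, the backward scattering of $W_0^+$ cannot be obtained from the sub-threshold theorem you quote: that theorem has the strict hypothesis $E_0[u_0] < E_0[W_0]$, whereas $E_0[W_0^+] = E_0[W_0]$. Being permanently in the region $\|u(t)\|_{\dot H^1} < \|W_0\|_{\dot H^1}$ at the exact threshold energy is precisely the delicate case; Duyckaerts--Merle resolve it by a separate concentration--compactness and rigidity argument at the threshold (combined with the exponential convergence to $W_0$ forward and an exclusion argument for self-similar/compact scenarios), not by invoking the strictly sub-threshold dichotomy. Second, your claim that $xW_0\in L^2$ for $d\geq 5$ is off by two dimensions: since $W_0(x)\sim |x|^{-(d-2)}$ at infinity, $W_0\in L^2$ iff $d\geq 5$, but $xW_0\in L^2$ iff $d\geq 7$. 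Thus for $d=5,6$ the global virial quantity $\int |x|^2 |W_0^-|^2\,dx$ is infinite and a truncated virial argument is required (this is why the paper's own blow-up proofs in Section \ref{Sec: Blow-up} work with the cut-off $\mathscr{X}_R$ rather than the raw weight $|x|^2$). Relatedly, your "coercive energy-trapping lower bound $\|W_0^-(t)\|_{\dot H^1}^2 - \|W_0\|_{\dot H^1}^2 \geq \delta>0$" cannot hold, because $\|W_0^-(t)\|_{\dot H^1}\to\|W_0\|_{\dot H^1}$ as $t\to+\infty$ by construction; fortunately it is also unnecessary, since the virial identity gives $\frac{d^2}{dt^2}\int |x|^2|W_0^-|^2\,dx = \frac{16}{d-2}\,\delta(t) < 0$ strictly for all $t$, and a nonnegative, strictly concave function cannot be globally defined on $\R$, so blow-up (backward) follows directly once the finiteness of the truncated virial quantity and the associated error estimates are controlled.
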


\begin{theorem}[Campos--Farah--Roudenko \cite{CamFarRou22}, Duyckaerts--Merle \cite{DuyMer09}, Li--Zhang \cite{LiZha09}]
Let $d \geq 3$.
Assume that $u_0 \in \dot{H}_{\text{rad}}^1(\R^d)$ satisfies $E_0[u_0] = E_0[W_0]$.
Then, the following hold :
\begin{itemize}
\item[(1)]
If $\|u_0\|_{\dot{H}^1} < \|W_0\|_{\dot{H}^1}$, then the solution $u$ to \eqref{NLS0} with \eqref{IC} scatters or $u = W_0^+$ up to the symmetries.
\item[(2)]
If $\|u_0\|_{\dot{H}^1} = \|W_0\|_{\dot{H}^1}$, then the solution $u$ to \eqref{NLS0} with \eqref{IC} is $W_0$ up to the symmetries.
\item[(3)]
If $\|u_0\|_{\dot{H}^1} > \|W_0\|_{\dot{H}^1}$ and $u_0 \in L^2(\R^d)$, then the solution $u$ to \eqref{NLS0} with \eqref{IC} blows up or $u = W_0^-$ up to the symmetries.
\end{itemize}
\end{theorem}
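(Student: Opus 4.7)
The plan is to handle the three sub-cases separately and reserve the bulk of the work for (1) and (3). For case (2), I would subtract the energy identities applied to $u_0$ and $W_0$: combining $E_0[u_0]=E_0[W_0]$ with $\|u_0\|_{\dot H^1}=\|W_0\|_{\dot H^1}$ gives $\|u_0\|_{L^{2^\ast}}=\|W_0\|_{L^{2^\ast}}$, and then the Pohozaev identity $\|W_0\|_{\dot H^1}^2=\|W_0\|_{L^{2^\ast}}^{2^\ast}$ derived from \eqref{SP} with $\gamma=0$ shows that $u_0$ saturates the sharp Sobolev inequality of Proposition \ref{Sharp Sobolev embedding}. By the Aubin--Talenti rigidity (and radiality, which rules out translations), $u_0$ agrees with $W_0$ up to scaling and phase, so the corresponding solution is $W_0$ itself up to the symmetries of \eqref{NLS0}.

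For cases (1) and (3), I would follow the Duyckaerts--Merle scheme. Assume, toward a contradiction to (1), that $u$ does not scatter. A variational/coercivity lemma based on Proposition \ref{Sharp Sobolev embedding} shows that the constraint $E_0[u(t)]=E_0[W_0]$ together with the initial data inequality $\|u_0\|_{\dot H^1}<\|W_0\|_{\dot H^1}$ traps the solution in the region $\|u(t)\|_{\dot H^1}<\|W_0\|_{\dot H^1}$ throughout its maximal lifespan; in case (3), the analogous argument, combined with the localized virial identity (where the assumption $u_0\in L^2$ is used), gives $\|u(t)\|_{\dot H^1}>\|W_0\|_{\dot H^1}$ together with the virial-type coercivity needed later. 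A linear profile decomposition of Bahouri--G\'erard / Keraani type, combined with perturbation theory for the energy-critical NLS and the sub-threshold scattering result recalled above, either produces scattering (contradiction) or extracts a minimal non-scattering critical element at energy $E_0[W_0]$ whose orbit is pre-compact in $\dot H^1$ modulo the scaling $u\mapsto \lambda^{(d-2)/2}u(\lambda\cdot)$.

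Next I would show that this critical element must approach $W_0$ (modulo scaling and phase) along some time sequence: its energy equals $E_0[W_0]$, its $\dot H^1$ norm is pinned to $\|W_0\|_{\dot H^1}$ from below (resp. above), and an energy/variational argument forces it to become, in the limit, a maximizer of the sharp Sobolev inequality, hence a rescaled $W_0$. Once such a convergent sequence is established, the uniqueness result for solutions asymptotically approaching $W_0$ from below (resp. above) in $\dot H^1$, which is precisely the content of the theorem about $W_0^\pm$ cited just above, identifies $u$ with $W_0^+$ in case (1) and with $W_0^-$ in case (3), up to the symmetries of \eqref{NLS0}.

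The principal obstacle is the modulation analysis near $W_0$. To obtain exponential convergence of the form $\|u(t)-W_0\|_{\dot H^1}\leq Ce^{-ct}$ for critical elements at threshold energy, one must work out detailed spectral information on the linearized operator $-\Delta-\frac{d+2}{d-2}W_0^{4/(d-2)}$: its decomposition into an unstable direction, the scaling/phase neutral modes, and a coercive remainder, together with quadratic-form coercivity on the orthogonal complement of the generalized kernel. Combined with the one-dimensional reduction along the unstable mode that simultaneously constructs the two distinguished solutions $W_0^\pm$ and establishes their uniqueness, this is the technically demanding core of the argument and the reason the proof fundamentally relies on the papers cited.
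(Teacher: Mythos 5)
The paper does not prove this theorem; it recalls it as a known result of Duyckaerts--Merle \cite{DuyMer09}, Li--Zhang \cite{LiZha09}, and Campos--Farah--Roudenko \cite{CamFarRou22}, so there is no in-paper proof to compare against. Your sketch is a faithful high-level outline of the Duyckaerts--Merle scheme used in those references: case (2) via rigidity in the sharp Sobolev inequality (with radiality removing translations), and cases (1), (3) via invariance of the gradient-separated regions, concentration-compactness to produce a precompact non-scattering (resp.\ non-blowing-up) orbit, modulation analysis around $W_0$ with coercivity of the linearized quadratic form, exponential convergence to $W_0$, and finally the uniqueness of $W_0^\pm$; this is also the same toolkit the present paper adapts in Sections \ref{Sec: Modulation Analysis}--\ref{Sec: Blow-up} for its main theorem, with the difference that for $\gamma>0$ no special threshold solutions appear, so the paper derives a contradiction rather than identifying $u$ with $W_\gamma^\pm$.
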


We also list the known results for intercritical case (which has nonlinear power in $(1+\frac{4}{d},1+\frac{4}{d-2})$) and mass critical case (which has nonlinear power $1+\frac{4}{d}$).
In the intercritical case, we cite \cite{AkaNaw13, AroDodMur20, DodMur17, DodMur18, DuWuZha16, DuyHolRou10, FanXieCaz11, Gla77, HolRou08, HolRou12, OgaTsu911, OgaTsu912, Zhe18} for below the ground state results and \cite{CamFarRou22, DuyRou10, Guo11, GusInu221, GusInu22} for threshold results.
In the mass critical case, we cite \cite{Dod15}. 

Next, we introduce known results for \eqref{NLS} with $\gamma \neq 0$.
It is known that time behavior of solutions to \eqref{NLS} with initial data, whose energy is less than the energy of $W_\gamma$.

\begin{theorem}[Kai \cite{Kai17, Kai20}, Killip--Miao--Visan--Zhang--Zheng \cite{KilMiaVisZhaZhe17}]\label{Below the Talenti}
Let $d \geq 3$ and $\gamma > -\frac{(d-2)^2}{4} + (\frac{d-2}{d+2})^2$.
Assume that $u_0 \in \dot{H}_{\gamma}^1(\R^d)$ satisfies $E_\gamma[u_0] < E_{\gamma \wedge 0}[W_{\gamma \wedge 0}]$, where $a \wedge b := \min\{a,b\}$.
Then, the following hold :
\begin{itemize}
\item[(1)]
If $3 \leq d \leq 6$, $\|u_0\|_{\dot{H}_\gamma^1} < \|W_{\gamma \wedge 0}\|_{\dot{H}_{\gamma \wedge 0}^1}$, and $u_0$ is radially symmetric for $d = 3$, then the solution $u$ to \eqref{NLS} with \eqref{IC} scatters.
\item[(2)]
If $\|u_0\|_{\dot{H}_\gamma^1} > \|W_{\gamma \wedge 0}\|_{\dot{H}_{\gamma \wedge 0}^1}$ and ``$xu_0 \in L^2(\R^d)$ or $u_0 \in L_\text{rad}^2(\R^d)$'', then the solution $u$ to \eqref{NLS} with \eqref{IC} blows up.
\end{itemize}
\end{theorem}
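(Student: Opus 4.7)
The plan is to split into the scattering regime (1) and the blow-up regime (2), and attack them with the now-standard concentration-compactness/rigidity method of Kenig--Merle for (1) and a (possibly localized) virial argument of Glassey type for (2). Both halves hinge on the sharp Sobolev embedding (Proposition \ref{Sharp Sobolev embedding}) and its characterization, which converts the hypotheses into a sharp \emph{energy trapping} picture.

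For part (1), I would first upgrade the sub-threshold hypothesis to a time-invariant statement. Using conservation of $E_\gamma$, the sharp Sobolev inequality (and the monotonicity of $C_S(\gamma)$ in the direction that compares $\gamma$ with $\gamma\wedge 0$), a continuity-in-time argument shows that the assumption $\|u_0\|_{\dot H_\gamma^1} < \|W_{\gamma\wedge 0}\|_{\dot H_{\gamma\wedge 0}^1}$ propagates to $\|u(t)\|_{\dot H_\gamma^1} < \|W_{\gamma\wedge 0}\|_{\dot H_{\gamma\wedge 0}^1}$ for all $t$ in the maximal interval. This yields coercivity of the Pohozaev-type functional
\begin{align*}
K_\gamma(f) := \|f\|_{\dot H_\gamma^1}^2 - \|f\|_{L^{2^\ast}}^{2^\ast},
\end{align*}
in the sense that $K_\gamma(u(t)) \geq c(u_0)\|u(t)\|_{\dot H_\gamma^1}^2$ along the flow. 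Then I would run the Kenig--Merle contradiction argument: assume the scattering threshold $E_c < E_{\gamma\wedge 0}[W_{\gamma\wedge 0}]$ is attained, and through a profile decomposition adapted to $e^{it\Delta_\gamma}$ (using the harmonic-analysis framework for $-\Delta_\gamma$ from \cite{KilMiaVisZhaZhe17}) extract a critical element with pre-compact orbit in $\dot H^1_\gamma$ modulo the residual symmetries. The dimensional restriction $3\le d\le 6$ and the lower bound $\gamma > -\frac{(d-2)^2}{4}+(\frac{d-2}{d+2})^2$ enter to ensure an acceptable range of Strichartz/equivalence-of-Sobolev-norms for $(-\Delta_\gamma)^{s/2}$; the radial restriction in $d=3$ removes the translation parameter from the profile decomposition. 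Finally, a localized interaction Morawetz or truncated virial estimate, combined with the coercivity of $K_\gamma$, forces the critical element to be trivial --- a contradiction.

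For part (2), I would use the Glassey virial identity. When $xu_0\in L^2$, setting $V(t):=\|x u(t)\|_{L^2}^2$ the formal computation gives
\begin{align*}
V''(t) = 16\, E_\gamma[u_0] - \frac{8(d-2)}{d}\bigl(\|u(t)\|_{\dot H_\gamma^1}^2 - \|u(t)\|_{L^{2^\ast}}^{2^\ast}\bigr) \cdot (\text{appropriate constant})
\end{align*}
up to the correct normalization; the inverse-square term is virial-friendly because $x\cdot\nabla(|x|^{-2})=-2|x|^{-2}$ reproduces the potential. The supercritical-mass condition $\|u_0\|_{\dot H^1_\gamma} > \|W_{\gamma\wedge 0}\|_{\dot H^1_{\gamma\wedge 0}}$ combined with $E_\gamma[u_0]<E_{\gamma\wedge 0}[W_{\gamma\wedge 0}]$ is propagated in time (again by continuity and Proposition \ref{Sharp Sobolev embedding}) to produce $V''(t)\leq -\delta<0$, which forces $V$ to become negative in finite time, contradicting non-negativity and hence forcing $T_{\max}<\infty$ (and analogously $T_{\min}>-\infty$). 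For the radial $L^2$ case one replaces $|x|^2$ with a smoothly cut-off weight $\varphi_R(x)$ equal to $|x|^2$ on $|x|\le R$ and constant for $|x|\ge 2R$; the error terms are controlled via Strauss's radial Sobolev embedding, and the same negative upper bound on $V_R''(t)$ is recovered for $R$ large.

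The main obstacle I expect is in part (1): running the concentration-compactness machinery in the inverse-square setting requires the full suite of harmonic-analysis tools for $-\Delta_\gamma$ --- equivalence of Sobolev spaces $\|(-\Delta_\gamma)^{s/2}f\|_{L^p}\simeq \|(-\Delta)^{s/2}f\|_{L^p}$ in a suitable $(p,s)$ range, Strichartz estimates for $e^{it\Delta_\gamma}$, and a linear profile decomposition adapted to the $\dot H^1_\gamma$-metric --- and it is exactly this range that forces the hypothesis $\gamma > -\frac{(d-2)^2}{4}+(\frac{d-2}{d+2})^2$ and $d\le 6$. Securing the profile decomposition (which must handle the breakdown of translation symmetry due to the singular potential, so that only a scaling parameter remains) and the attendant nonlinear profile approximation via a stability theory for \eqref{NLS} is the technical heart; the coercivity and virial/rigidity steps are comparatively routine once the harmonic analysis is in place.
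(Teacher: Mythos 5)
This theorem is not proved in the present paper; it is imported verbatim from the cited references (Kai \cite{Kai17,Kai20}, Killip--Miao--Visan--Zhang--Zheng \cite{KilMiaVisZhaZhe17}), so there is no in-paper proof to compare against. Your sketch correctly captures the approach actually used in those references: energy trapping via the sharp Sobolev embedding and conservation of $E_\gamma$, the Kenig--Merle concentration-compactness/rigidity scheme for part (1) built on the equivalence-of-Sobolev-norms range, Strichartz estimates for $e^{it\Delta_\gamma}$, a profile decomposition with scaling (and, in the non-radial $d\ge4$ case, translation parameters escaping the singularity, where one compares against the potential-free equation), and stability theory; and for part (2) the Glassey virial identity when $xu_0\in L^2$ together with a truncated virial controlled by Strauss's radial Sobolev inequality when $u_0\in L^2_{\mathrm{rad}}$.
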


It is known that time behavior of solutions to \eqref{NLS} with initial data, whose energy is equal to that of $W_\gamma$.

\begin{theorem}[Kai \cite{Kai17}]\label{Special solutions}
Let $3 \leq d \leq 6$ and $-\frac{(d-2)^2}{4} + (\frac{2(d-2)}{d+2})^2 < \gamma < 0$.
There exist two radial solutions $W_\gamma^\pm \in \dot{H}^1(\R^d)$ to \eqref{NLS} satisfying the following properties :
\begin{itemize}
\item[(1)]
$E_\gamma[W_\gamma^+] = E_\gamma[W_\gamma^-] = E_\gamma[W_\gamma]$ and
\begin{align*}
	\lim_{t\rightarrow \infty}\|W_\gamma^\pm(t) - W_\gamma\|_{\dot{H}^1}
		= 0.
\end{align*}
(We note that $W_\gamma^\pm$ is defined on $[0,\infty)$.)
\item[(2)]
$\|W_\gamma^+\|_{\dot{H}_\gamma^1} < \|W_\gamma\|_{\dot{H}_\gamma^1}$ and $W_\gamma^+$ scatters in negative time,
\item[(3)]
$\|W_\gamma^-\|_{\dot{H}_\gamma^1} > \|W_\gamma\|_{\dot{H}_\gamma^1}$ and if $d = 5, 6$, then $W_\gamma^-$ blows up in negative time.
\end{itemize}
\end{theorem}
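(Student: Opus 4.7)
I would carry out a Duyckaerts--Merle-type construction of threshold special solutions \cite{DuyMer09}, adapted to the inverse-square setting as in \cite{Kai17}, realizing $W_\gamma^\pm$ as the two orbits on a one-dimensional unstable manifold through $W_\gamma$, and then extracting the kinetic-energy ordering and the backward dynamics from the structure of the construction together with Theorem \ref{Below the Talenti}.

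Setting $u = W_\gamma + v + iw$ with $v,w$ real, the linearization of \eqref{NLS} around $W_\gamma$ is governed by the two scalar operators
\begin{align*}
L_+ = -\Delta_\gamma - \tfrac{d+2}{d-2} W_\gamma^{4/(d-2)}, \qquad L_- = -\Delta_\gamma - W_\gamma^{4/(d-2)},
\end{align*}
assembled into a Hamiltonian generator $\mathcal L = J\,\mathrm{diag}(L_+, L_-)$. The Sobolev-extremizer property of $W_\gamma$ (Proposition \ref{Sharp Sobolev embedding}) together with the Pohozaev identity forces $L_+|_{\mathrm{rad}}$ to have a single simple negative eigenvalue and $L_-|_{\mathrm{rad}} \ge 0$ with $L_- W_\gamma = 0$; consequently $\mathcal L$ has a simple real eigenvalue $e_0 > 0$ with a radial Schwartz eigenfunction $\mathcal Y_+$. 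For each $k \ge 1$ I would then build a radial approximate solution
\begin{align*}
V_a^k(t,x) = W_\gamma(x) + \sum_{j=1}^{k} a^j e^{-j e_0 t}\, Z_j(x), \qquad a \in \R,
\end{align*}
with $Z_1 = \mathcal Y_+$ and the $Z_j$ defined recursively by inverting $(\mathcal L - j e_0)$ against the source produced by Taylor-expanding the nonlinearity (well posed since $j e_0 \notin \mathrm{spec}(\mathcal L)$ for $j \ge 2$). By construction $V_a^k$ solves \eqref{NLS} modulo an error of size $e^{-(k+1) e_0 t}$ in the relevant Strichartz norms. For $k$ large, solving for the remainder $h = u - V_a^k$ by a contraction mapping backward from $t = +\infty$ -- using the Strichartz theory for $e^{it\Delta_\gamma}$ from \cite{KilMiaVisZhaZhe17} and a spectral projection onto $\mathcal Y_+$ to neutralize the unique growing linear mode -- yields a one-parameter family $u_a$ of radial $\dot H^1$-solutions with $\|u_a(t) - W_\gamma\|_{\dot H^1_\gamma} \lesssim e^{-e_0 t}$. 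Setting $a = \pm 1$ (labeled so that the leading correction points to the correct side of $W_\gamma$) defines $W_\gamma^\pm$ and proves (1).

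The kinetic-energy orderings in (2) and (3) follow from the leading expansion $\|u_a(t)\|_{\dot H_\gamma^1}^2 - \|W_\gamma\|_{\dot H_\gamma^1}^2 = 2 a c_0 e^{-e_0 t} + O(e^{-2 e_0 t})$ with $c_0 \neq 0$ by the Hessian analysis of $E_\gamma$ at its saddle-point critical point $W_\gamma$; energy conservation at threshold together with continuity then propagate the strict inequalities to all $t$ in the existence interval. The backward dynamics are extracted as follows. For $W_\gamma^+$, the strict subthreshold kinetic bound at threshold energy rules out backward blow-up (via the virial obstruction underlying Theorem \ref{Below the Talenti}(1)), and a concentration-compactness/rigidity argument at threshold, as in \cite{DuyMer09}, upgrades this non-blow-up to backward scattering. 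For $W_\gamma^-$, the reversed strict inequality together with the radial $L^2$ membership of $W_\gamma$ -- which holds precisely for $d = 5, 6$ in view of the $|x|^{-(d-2)}$ decay of $W_\gamma$ at infinity -- allows a direct virial argument at threshold (in the spirit of Theorem \ref{Below the Talenti}(2)) to force finite-time blow-up backward, explaining the dimensional restriction in (3).

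The main obstacle is the spectral analysis for $\mathcal L$ in the presence of the singular inverse-square potential: proving simplicity of $\pm e_0$, absence of embedded eigenvalues and threshold resonances, and coercivity of the linearized energy on the orthogonal complement of the unstable modes and the directions generated by the symmetries, all in a setting where standard elliptic regularity must be replaced by the $\Delta_\gamma$-calculus. It is precisely this step that forces the lower bound $\gamma > -\frac{(d-2)^2}{4} + \bigl(\tfrac{2(d-2)}{d+2}\bigr)^2$, needed both to obtain sufficient regularity and decay of $W_\gamma$ and to secure the $H^1_\gamma$-mapping properties of $(\mathcal L - j e_0)^{-1}$ required to close the Strichartz iteration.
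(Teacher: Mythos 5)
This theorem is stated in the paper as a citation to Kai's thesis \cite{Kai17}; the paper gives no proof of its own, so there is nothing to compare against directly. That said, your Duyckaerts--Merle reconstruction is the right template and matches the route of \cite{Kai17} (and of the closely related published account \cite{KaiZenZha22}): linearize around $W_\gamma$, extract the simple real eigenvalue pair of $\mathcal{L}=J\,\mathrm{diag}(L_+,L_-)$ using the Sobolev-extremal and Pohozaev structure of $W_\gamma$, build the $e^{-je_0 t}$ expansion $V_a^k$ by inverting $\mathcal{L}-je_0$, close by a backward-in-time fixed point with a spectral projection onto the unstable direction, and read off the sign of $\|u_a(t)\|_{\dot H_\gamma^1}^2-\|W_\gamma\|_{\dot H_\gamma^1}^2$ from the leading Hessian term. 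Your use of Theorem \ref{Below the Talenti} for the backward-in-time dichotomy is also the intended mechanism.

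One caveat worth flagging: your explanation that the restriction to $d=5,6$ in (3) holds ``precisely because $W_\gamma\in L^2$'' is slightly too clean. With $W_\gamma\sim|x|^{-(\beta+1)(d-2)/2}$ at infinity and $\beta=\sqrt{1+4\gamma/(d-2)^2}<1$ for $\gamma<0$, membership of $W_\gamma$ in $L^2$ requires $\beta>2/(d-2)$, while the hypothesis $\gamma>-\tfrac{(d-2)^2}{4}+(\tfrac{2(d-2)}{d+2})^2$ only gives $\beta>4/(d+2)$. For $d=5$ these two thresholds do not coincide ($4/7<2/3$), so within the stated $\gamma$-range $W_\gamma$ need not lie in $L^2$; the actual blow-up proof in this setting uses a localized virial argument rather than bare $L^2$-virial, and the dimensional restriction is tied to the decay needed to control the error terms in that localization. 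This does not undermine your overall outline, but the heuristic as written would not survive scrutiny at the endpoint. Similarly, ``absence of embedded eigenvalues/resonances'' is not a free consequence of the Sobolev-extremal property but a separate spectral input; in \cite{Kai17} it is obtained by comparison with the $\gamma=0$ operator using the equivalence of Sobolev norms (Lemma \ref{Equivalence of Sobolev norm}), which is also the source of the $\gamma$-range restriction, rather than regularity of $W_\gamma$ per se.
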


\begin{theorem}[Kai \cite{Kai17}]\label{Attractive result}
Let $3 \leq d \leq 6$ and $-\frac{(d-2)^2}{4} + (\frac{2(d-2)}{d+2})^2 < \gamma < 0$.
Assume that $u_0 \in \dot{H}_{\text{rad}}^1(\R^d)$ satisfies $E_\gamma[u_0] = E_\gamma[W_\gamma]$.
Then, the following hold :
\begin{itemize}
\item[(1)]
If $\|u_0\|_{\dot{H}_\gamma^1} < \|W_\gamma\|_{\dot{H}_\gamma^1}$, then the solution $u$ to \eqref{NLS} with \eqref{IC} scatters or $u = W_\gamma^+$ up to the symmetries.
\item[(2)]
If $\|u_0\|_{\dot{H}_\gamma^1} = \|W_\gamma\|_{\dot{H}_\gamma^1}$, then the solution $u$ to \eqref{NLS} with \eqref{IC} is $W_\gamma$ up to the symmetries.
\item[(3)]
If $\|u_0\|_{\dot{H}_\gamma^1} > \|W_\gamma\|_{\dot{H}_\gamma^1}$ and $u_0 \in L^2(\R^d)$, then the solution $u$ to \eqref{NLS} with \eqref{IC} blows up in $d = 3, 4$ and ``blows up or $u = W_\gamma^-$ up to the symmetries in $d = 5, 6$''.
\end{itemize}
\end{theorem}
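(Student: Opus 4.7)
The plan is to follow the Duyckaerts--Merle threshold framework, using the special solutions $W_\gamma^\pm$ from Theorem \ref{Special solutions} and the variational characterization of $W_\gamma$ from Proposition \ref{Sharp Sobolev embedding}. A preliminary trapping observation disposes of several cases at once: under $E_\gamma[u(t)] = E_\gamma[W_\gamma]$, the quantity $\|u(t)\|_{\dot{H}_\gamma^1} - \|W_\gamma\|_{\dot{H}_\gamma^1}$ cannot change sign, since a crossing at some $t^*$ would force $u(t^*)$ to saturate Proposition \ref{Sharp Sobolev embedding}, hence (for $\gamma<0$) equal $W_\gamma$ up to scaling and phase, contradicting the strict inequality on one side of $t^*$. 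Case (2) is then immediate: the hypotheses make $u_0$ an extremizer in Proposition \ref{Sharp Sobolev embedding}, so $u_0 = e^{i\theta_0}\lambda_0^{(d-2)/2}W_\gamma(\lambda_0\,\cdot)$, and uniqueness of $H_\gamma^1$ solutions gives $u = W_\gamma$ up to the symmetries of \eqref{NLS}.

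For Case (1), assume by contradiction that $u$ is a radial threshold-energy solution with $\|u_0\|_{\dot{H}_\gamma^1} < \|W_\gamma\|_{\dot{H}_\gamma^1}$ that neither scatters nor coincides with $W_\gamma^+$ up to symmetries. I would invoke the profile-decomposition / concentration-compactness machinery developed for \eqref{NLS} in \cite{KilMiaVisZhaZhe17} to extract a minimal non-scattering element $u_c$ whose orbit is precompact in $\dot{H}_\gamma^1$ modulo scaling and phase — translations being excluded by the $O(d)$-symmetry of the inverse-square potential together with the radiality hypothesis. Using the spectral analysis of the linearized operator $\mathcal{L}_\gamma$ around $W_\gamma$ established in Kai \cite{Kai17} (one unstable eigenvalue, kernel generated by the symmetries, coercivity on the orthogonal complement), I would prove an exponential convergence lemma: some modulation of $u_c$ approaches $W_\gamma$ in $\dot{H}_\gamma^1$ exponentially fast along a sequence of times. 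The one-dimensional stable manifold at $W_\gamma$ produced in Theorem \ref{Special solutions} then identifies $u_c$ with $W_\gamma^+$ up to symmetries, yielding the contradiction. Case (3) proceeds by the same argument with the blow-up alternative replacing scattering and $W_\gamma^-$ replacing $W_\gamma^+$.

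The dimension-dependent blow-up conclusion in Case (3) for $d = 3, 4$ reduces to showing $W_\gamma^- \notin L^2(\R^d)$ in those dimensions. From \eqref{125} and $\beta = \sqrt{1 + 4\gamma/(d-2)^2} \in (0,1)$ for $\gamma < 0$, one has $W_\gamma(x) \sim |x|^{-(d-2)(\beta+1)/2}$ at infinity, so $W_\gamma \in L^2(\R^d)$ requires $\beta > 2/(d-2)$, which fails whenever $d \in \{3, 4\}$. The convergence $W_\gamma^-(t) \to W_\gamma$ in $\dot{H}_\gamma^1$ from Theorem \ref{Special solutions}(1), combined with $L^2$-mass conservation and lower semicontinuity, transfers this to $W_\gamma^- \notin L^2(\R^d)$ in $d = 3, 4$. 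Hence under $u_0 \in L^2$ the alternative $u = W_\gamma^-$ is excluded and only blow-up remains.

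The principal obstacle is the exponential convergence / modulation step. While Duyckaerts--Merle carried out this analysis at $\gamma = 0$, the singular weight $\gamma/|x|^2$ complicates both the coercivity of $\mathcal{L}_\gamma$ on the orthogonal complement of its kernel and the Lyapunov functionals governing the modulation parameters; the associated spectral gap degenerates as $\gamma$ approaches the lower limit $-(d-2)^2/4 + (2(d-2)/(d+2))^2$, which in turn accounts for the restriction to that range in the statement.
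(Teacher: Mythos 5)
This theorem is stated in the paper as a cited result attributed to Kai \cite{Kai17}; the paper does not supply a proof of it. The paper's own contribution (Theorem \ref{Main theorem}, proved in Sections \ref{Sec: Modulation Analysis}--\ref{Sec: Blow-up}) concerns the complementary parameter range $\gamma > 0$, so there is no internal proof of the present statement to compare your sketch against.

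That said, your outline is a reasonable reconstruction of the Duyckaerts--Merle threshold framework that underlies Kai's argument, and the mechanical points you raise are sound. In particular, the trapping of the sign of $\|u(t)\|_{\dot H^1_\gamma}^2 - \|W_\gamma\|_{\dot H^1_\gamma}^2$ via Proposition \ref{Sharp Sobolev embedding} is exactly the right preliminary step, and your dimension analysis for Case (3) is correct: from \eqref{125}, $W_\gamma(x) \sim |x|^{-(\beta+1)(d-2)/2}$ at infinity, so $W_\gamma \in L^2(\R^d)$ requires $\beta > 2/(d-2)$, which fails for all $\beta < 1$ (i.e.\ all $\gamma < 0$) when $d \in \{3,4\}$; combined with $W_\gamma^-(t) \to W_\gamma$ in $\dot H^1$ and mass conservation, this rules out $W_\gamma^- \in L^2$ in those dimensions. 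One small wording imprecision: in Duyckaerts--Merle one does not ``extract a minimal non-scattering element $u_c$'' as a separate object in the threshold case; rather the profile decomposition applied along time sequences of the given threshold solution $u$ itself forces, via the energy decoupling and Lemma-\ref{Coercivity of energy}-type coercivity, a single profile carrying all the energy, yielding precompactness of $\{u(t)\}$ modulo scaling and phase. Also, your stated heuristic for the lower bound on $\gamma$ (degeneration of the spectral gap of the linearized operator) is plausible but unverified here; in related results the restriction is also tied to the exponent ranges needed in the well-posedness theory and equivalence-of-Sobolev-norms estimates (cf.\ Lemma \ref{Equivalence of Sobolev norm}), so you should not present the spectral explanation as established without checking it against Kai's proof.
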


We also list the known results for intercritical case.
We cite \cite{KilMurVisZhe17, LuMiaMur18} for below the ground state results and \cite{MiaMurZhe21} for a threshold result.
In situations different from having inverse square potential, threshold results are given in \cite{ArdHamIke22, ArdInu22, CamMur22, CamPas22, DuyLanRou22, GusInu222, HamKikWat22}.

We are interested in that what happens if we change $E_\gamma[u_0] < E_0[W_0]$ to $E_\gamma[u_0] = E_0[W_0]$ for $\gamma > 0$ in Theorem \ref{Below the Talenti}.
Here, we state our main theorem.

\begin{theorem}\label{Main theorem}
Let $d \geq 3$, $\gamma > 0$, and $u_0 \in \dot{H}_{\text{rad}}^1(\R^d)$.
Then, the following hold :
\begin{itemize}
\item[(1)]
If $3 \leq d \leq 6$ and $u_0 \in BW_+$, then the solution $u$ to \eqref{NLS} with \eqref{IC} scatters, where
\begin{align*}
	BW_+
		:= \{f \in \dot{H}_\text{rad}^1(\R^d) : E_\gamma[f] = E_0[W_0]\text{ and }\|f\|_{\dot{H}_\gamma^1} < \|W_0\|_{\dot{H}^1}\}.
\end{align*}
\item[(2)]
If $u_0 \in BW_-$ and $u_0 \in L^2(\R^d)$, then the solution $u$ to \eqref{NLS} with \eqref{IC} blows up, where
\begin{align*}
	BW_-
		:= \{f \in \dot{H}_\text{rad}^1(\R^d) : E_\gamma[f] = E_0[W_0]\text{ and }\|f\|_{\dot{H}_\gamma^1} > \|W_0\|_{\dot{H}^1}\}.
\end{align*}
\end{itemize}
\end{theorem}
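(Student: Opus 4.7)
My approach follows the standard variational/dynamical split for threshold problems: combine energy conservation with non-attainment of the Sobolev constant to trap the solution in a coercive regime, then conclude either scattering (via concentration-compactness/rigidity) or blow-up (via a virial identity). The key new ingredient is that for $\gamma>0$, radial functions in $\dot{H}_\gamma^1$ satisfy a \emph{strict} Sobolev gap that the full space lacks.

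\emph{Step 1 (radial Sobolev gap).} I plan to show there exists $\widetilde{C}=\widetilde{C}(\gamma,d)<C_S(0)$ with $\|f\|_{L^{2^*}}\le\widetilde{C}\,\|f\|_{\dot{H}_\gamma^1}$ for every radial $f\in\dot{H}_\gamma^1(\R^d)$. If this fails, some normalized radial sequence $\{f_n\}$ has $\|f_n\|_{L^{2^*}}/\|f_n\|_{\dot{H}_\gamma^1}\to C_S(0)$. Since $\|f_n\|_{\dot{H}^1}\le\|f_n\|_{\dot{H}_\gamma^1}$ and $C_S(0)$ is also the full Sobolev constant, $\{f_n\}$ is automatically a Sobolev-maximizing sequence in $\dot{H}^1(\R^d)$. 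The G\'erard--Keraani profile decomposition, together with the convexity analysis of Sobolev-maximizing sequences and the Aubin--Talenti characterization, collapses the decomposition to a single profile that is a rescaled $W_0$; radiality forces the translation parameter to be trivial, so the concentration is at the origin. Because $\int|\cdot|^2/|x|^2\,dx$ is invariant under the $L^{2^*}$-critical scaling, we obtain $\|f_n\|_{\dot{H}_\gamma^1}^2\to\|W_0\|_{\dot{H}^1}^2+\gamma\int|W_0|^2/|x|^2\,dx>\|W_0\|_{\dot{H}^1}^2$, so that $\|f_n\|_{L^{2^*}}\to C_S(0)\|W_0\|_{\dot{H}^1}/\|W_0\|_{\dot{H}_\gamma^1}<C_S(0)$, a contradiction.

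\emph{Step 2 (trapping).} Setting $g(y):=\tfrac12 y^2-\tfrac{\widetilde{C}^{2^*}}{2^*}y^{2^*}$, a direct computation gives $\max g=\widetilde{C}^{-d}/d>C_S(0)^{-d}/d=E_0[W_0]$, so the sub-level set $\{g\le E_0[W_0]\}$ splits as $[0,y_-]\cup[y_+,\infty)$ with $y_-<\|W_0\|_{\dot{H}^1}<y_+$. Step~1 together with $E_\gamma[u(t)]=E_0[W_0]$ forces $g(\|u(t)\|_{\dot{H}_\gamma^1})\le E_0[W_0]$, and continuity pins $\|u(t)\|_{\dot{H}_\gamma^1}$ to the component chosen by $u_0$: $\|u(t)\|_{\dot{H}_\gamma^1}\le y_-$ for $u_0\in BW_+$ and $\|u(t)\|_{\dot{H}_\gamma^1}\ge y_+$ for $u_0\in BW_-$, uniformly on the maximal interval of existence.

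\emph{Step 3 (dynamics and main obstacle).} For Part~(1) the uniform sub-threshold bound $\|u(t)\|_{\dot{H}_\gamma^1}\le y_-$ places the solution in the same regime as Theorem~\ref{Below the Talenti}, so the concentration-compactness/rigidity scheme of Killip--Miao--Visan--Zhang--Zheng \cite{KilMiaVisZhaZhe17} applies and yields scattering (the restriction $3\le d\le 6$ is inherited from their proof). For Part~(2), the identity
\begin{equation*}
\|u\|_{\dot{H}_\gamma^1}^2-\|u\|_{L^{2^*}}^{2^*}=-\tfrac{2}{d-2}\bigl(\|u\|_{\dot{H}_\gamma^1}^2-\|W_0\|_{\dot{H}^1}^2\bigr),
\end{equation*}
which follows from $E_\gamma[u]=E_0[W_0]$ and the Pohozaev relation $\|W_0\|_{\dot{H}^1}^2=dE_0[W_0]$, is uniformly $\le -c<0$ by Step~2; inserting this into a localized radial virial identity of Ogawa--Tsutsumi type (used because we only assume $u_0\in L^2$ rather than $xu_0\in L^2$) forces finite-time blow-up. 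The delicate step throughout is Step~1: handling the radial profile decomposition so that the non-vanishing Hardy correction produces the strict gap $\widetilde{C}<C_S(0)$. Once that gap is secured, Steps~2 and~3 are applications of standard variational, Kenig--Merle, and virial machinery already in the literature.
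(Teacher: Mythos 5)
Your Step 1 is correct, and it is in fact the content of the contradiction derived inside the paper's own proof of Lemma~\ref{Variational characterization}: there a radial sequence $\{f_n\}$ with $E_\gamma[f_n]=E_0[W_0]$ and $\delta(f_n)\to 0$ is shown, via the radial linear profile decomposition (Lemma~\ref{Linear profile decomposition}) and the non-attainment in Proposition~\ref{Sharp Sobolev embedding} for $\gamma>0$, to produce a nontrivial radial optimizer of $C_S(\gamma)$ --- a contradiction reached without ever using the ``far from $W_0$'' clause, so the set $\{f\in\dot H^1_{\mathrm{rad}}:E_\gamma[f]=E_0[W_0],\ |\delta(f)|<\delta_0\}$ is empty for $\delta_0$ small. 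That is exactly your uniform radial Sobolev gap, and Step 2 then follows by elementary calculus. What you do with it differs genuinely from the paper: you conclude $|\delta(t)|$ is uniformly bounded below, so the entire modulation-analysis apparatus of Section~\ref{Sec: Modulation Analysis} (the expansion $u_{[\theta,\mu]}=(1+\alpha)W_0+v$, coercivity of $Q$, control of $\theta',\mu'$) and its use in the terminal rigidity lemmas of Sections~\ref{Sec: Scattering} and~\ref{Sec: Blow-up} are bypassed --- the scattering rigidity collapses to ``virial gives $\tfrac1T\int_0^T\delta\to 0$ for the hypothetical soliton while the gap gives $\inf\delta>0$'', and blow-up follows from $I''_{\mathscr X_R}\le -c<0$ uniformly once $R$ is large. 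The paper instead treats a (in your route, vacuous) near-$W_0$ regime via modulation parameters, a heavier argument whose robustness is what the Remark after Theorem~\ref{Main theorem} alludes to when it mentions controlling the scale parameter.

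That said, Step 3 is too compressed to be called a proof, particularly for Part (1). Having $\|u(t)\|_{\dot H_\gamma^1}\le y_-<\|W_0\|_{\dot H^1}$ does not ``place the solution in the same regime as Theorem~\ref{Below the Talenti}'': that theorem needs $E_\gamma[u_0]<E_0[W_0]$ strictly, while here the energy sits exactly at the threshold, so the induction on energy does not close for free. You still must run the concentration--compactness reduction in Proposition~\ref{Precompact of critical solution} and Lemma~\ref{Precompact of critical solution along a sequence} (profile decomposition of $u(\tau_n)$, energy decoupling, scattering of each strictly sub-threshold profile via Theorem~\ref{Below the Talenti}, the stability theorem, and Lemma~\ref{Cor of local smoothing} to control the remainder--profile interaction) to extract a soliton-like solution precompact modulo scaling, and also Lemma~\ref{Global existence} to rule out finite-time blow-up, before the gap-based rigidity applies. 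For Part (2), your reduction $\|u\|_{\dot H_\gamma^1}^2-\|u\|_{L^{2^*}}^{2^*}=\tfrac{2}{d-2}\delta(t)\le -c$ is right, but the error terms in the localized virial still require the quantitative control of Lemma~\ref{Exponential decay}, Step 1 (here only the $|\delta|\ge\delta_1$ branch), using the radial Strauss inequality, mass conservation, and the observation that $\|u\|_{\dot H^1}^2\le\|W_0\|_{\dot H^1}^2+|\delta(t)|$ so the $(\,\cdot\,)^{1/(d-2)}$ growth is dominated by $|\delta|$. These gaps are fillable and your route is genuinely simpler once filled, but as written the hard analytic content is replaced by a citation.
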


\begin{remark}
It is impossible that $u_0$ satisfies $\|u_0\|_{\dot{H}_\gamma^1} = \|W_0\|_{\dot{H}^1}$ under the assumption $E_\gamma[u_0] = E_0[W_0]$.
\end{remark}

\begin{remark}
Unlike Theorem \ref{Attractive result}, the special solutions $W_\gamma^\pm$ (in Theorem \ref{Special solutions}) does not appear in Theorem \ref{Main theorem}.
Compared with results for equations with a repulsive potential in the intercritical case (e.g. \cite{ArdHamIke22, ArdInu22, MiaMurZhe21}), scale parameters arise in the proof and we need to control them.
\end{remark}

The organization of the rest of this paper is as follows :
In Section \ref{Sec: Preliminary}, we collect some notations and tools.
In Section \ref{Sec: Well-posedness}, we recall the well-posedness results of \eqref{NLS}.
In Section \ref{Sec: Modulation Analysis}, we analyze modulation around $W_0$ of function $f$ with $E_\gamma[f] = E_0[W_0]$.
In Section \ref{Sec: Scattering}, we prove scattering part in Theorem \ref{Main theorem}.
In Section \ref{Sec: Blow-up}, we show blow-up part in Theorem \ref{Main theorem}.

\section{Preliminary}\label{Sec: Preliminary}

\subsection{Notation and definition}

For negative $X$ and $Y$, we write $X\lesssim Y$ to denote $X\leq CY$ for some $C>0$. If $X\lesssim Y\lesssim X$, we write $X\sim Y$.
The dependence of implicit constants on parameters will be indicated by subscripts, e.g. $X\lesssim_uY$ denotes $X\leq CY$ for some $C = C(u)$.
We write $a'\in[1,\infty]$ to denote the H\"older dual exponent to $a\in[1,\infty]$, that is, the solution $\frac{1}{a} + \frac{1}{a'} = 1$. \\

$L^p = L^p(\R^d)$, $1\leq p\leq\infty$ denotes usual Lebesgue space.
For Banach space $X$, we use $L^q(I;X)$ to denote the Banach space of functions $f:I\times \R^d \longrightarrow \C$ whose norm is
\begin{align*}
	\|f\|_{L^q(I;X)}
		:= \left(\int_I\|f(t)\|_X^qdt\right)^\frac{1}{q}
		< \infty,
\end{align*}
with the usual modifications when $q=\infty$.
We extend our notation as follows: If a time interval is not specified, then the $t$-norm is evaluated over $(-\infty,\infty)$.
To indicate a restriction to a time subinterval $I\subset(-\infty,\infty)$, we will write as $L^q(I)$.
$W^{s,p}(\R^d) = (1-\Delta)^{-\frac{s}{2}}L^p(\R^d)$ and $\dot{W}^{s,p}(\R^d) = (-\Delta)^{-\frac{s}{2}}L^p(\R^d)$ are the inhomogeneous Sobolev space and the homogeneous Sobolev space, respectively for $s \in \R$ and $p \in [1,\infty]$.
When $p = 2$, we express $W^{s,2}(\R^d) = H^s(\R^d)$ and $\dot{W}^{s,2}(\R^d) = \dot{H}^s(\R^d)$.
Similarly, we define Sobolev spaces with the potential $W_\gamma^{s,p}(\R^d) = (1-\Delta_\gamma)^{-\frac{s}{2}}L^p(\R^d)$, $\dot{W}_\gamma^{s,p}(\R^d) = (-\Delta_\gamma)^{-\frac{s}{2}}L^p(\R^d)$, $W_\gamma^{s,2}(\R^d) = H_\gamma^s(\R^d)$, and $\dot{W}_\gamma^{s,2}(\R^d) = \dot{H}_\gamma^s(\R^d)$.
To denote a space of radial functions, we use $X_\text{rad} := \{f \in X : f \text{ is a radial function}\}$ for a space $X$.
We set also the following function spaces :
\begin{align*}
	S(I)
		& := L_{t,x}^\frac{2(d+2)}{d-2}(I), \qquad
	X^0(I)
		:= L_t^\frac{2(d+2)}{d-2}(I;L_x^\frac{2d(d+2)}{d^2+4}), \\
	\dot{X}_\gamma^1(I)
		& := L_t^\frac{2(d+2)}{d-2}(I;\dot{W}_\gamma^{1,\frac{2d(d+2)}{d^2+4}}), \qquad
	N^0(I)
		:= L_t^2(I;L_x^\frac{2d}{d+2}) + L_t^1(I;L_x^2).
\end{align*}
We define a scaling notations :
\begin{align*}
	f_{[\lambda]}(x)
		:= \lambda^{-\frac{d-2}{2}}f(\lambda^{-1}x)
	\ \text{ and }\ 
	f_{[\theta,\lambda]}(x)
		:= e^{i\theta}\lambda^{-\frac{d-2}{2}}f(\lambda^{-1}x).
\end{align*}

\begin{remark}
We notice that the following estimates hold :
\begin{itemize}
\item
$\|f\|_{S(I)} \lesssim \|f\|_{\dot{X}^1(I)}$ (from the Sobolev embedding),
\item
$\|e^{it\Delta_\gamma}f\|_{\dot{X}^1(\R)} \lesssim \|f\|_{\dot{H}_\gamma^1}$ (from the Strichartz estimate below (Lemma \ref{Strichartz estimates})).
\end{itemize}
\end{remark}

\subsection{Some tools}

In this subsection, we collect somme tools, which is used throughout this paper.

\begin{lemma}[Hardy inequality]\label{Hardy inequality}
Let $d \geq 3$.
For any $f \in \dot{H}^1(\R^d)$, we have
\begin{align*}
	\(\frac{d-2}{2}\)^2\int_{\R^d}\frac{1}{|x|^2}|f(x)|^2dx
		\leq \|f\|_{\dot{H}^1}^2.
\end{align*}
\end{lemma}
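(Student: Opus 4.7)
The plan is to derive this inequality directly from the algebraic identity (\ref{124}) used already in the introduction, specialized to the boundary value $\gamma = -(\frac{d-2}{2})^2$, which corresponds to $\sigma = \frac{d-2}{2}$. Although (\ref{124}) is stated only for $\gamma > -(\frac{d-2}{2})^2$, the underlying integration-by-parts computation is purely algebraic and remains valid at the endpoint. Concretely, for $f \in C_c^\infty(\R^d \setminus \{0\})$ I would verify the pointwise/integrated identity
\begin{align*}
	\int_{\R^d} |\nabla f(x)|^2\,dx - \left(\frac{d-2}{2}\right)^2 \int_{\R^d} \frac{|f(x)|^2}{|x|^2}\,dx
		= \int_{\R^d} \left|\nabla f(x) + \frac{d-2}{2}\frac{x}{|x|^2} f(x)\right|^2 dx
\end{align*}
by expanding the square on the right, rewriting the cross term via $2\Re(x \cdot \nabla f)\bar f = x \cdot \nabla |f|^2$, and integrating by parts using the distributional identity $\nabla \cdot (x/|x|^2) = (d-2)/|x|^2$ on $\R^d \setminus \{0\}$; the boundary contributions vanish because $f$ is compactly supported away from the origin.

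Since the right-hand side of the identity is manifestly nonnegative, the claimed Hardy inequality follows on $C_c^\infty(\R^d \setminus \{0\})$. To extend it to all of $\dot H^1(\R^d)$, I would invoke the density of $C_c^\infty(\R^d \setminus \{0\})$ in $\dot H^1(\R^d)$ for $d \geq 3$ (a standard consequence of the vanishing $H^1$-capacity of a single point in these dimensions), approximate $f \in \dot H^1$ by a sequence $f_n$, apply the inequality to each $f_n$, and pass to the limit using $\dot H^1$-convergence on the right together with Fatou's lemma applied to $|f_n|^2/|x|^2$ along an a.e.-convergent subsequence on the left.

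The algebraic identity is routine; the only mildly delicate step is the density statement, but it is classical and the hypothesis $d \geq 3$ is precisely what makes it available (in $d = 1, 2$ the Hardy inequality actually fails with this constant, and $C_c^\infty(\R^d \setminus \{0\})$ is no longer dense in $\dot H^1$). Consequently I do not anticipate any substantive obstacle.
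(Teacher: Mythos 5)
The paper states Lemma~\ref{Hardy inequality} without proof, treating it as a classical fact, so there is no in-paper argument to compare against. Your proposed proof is correct and is the standard ``complete the square'' argument: for $f\in C_c^\infty(\R^d\setminus\{0\})$ one expands $\int|\nabla f+\tfrac{d-2}{2}\tfrac{x}{|x|^2}f|^2\,dx$, handles the cross term via $2\Re(\bar f\,x\cdot\nabla f)=x\cdot\nabla|f|^2$ and $\nabla\cdot(x/|x|^2)=(d-2)/|x|^2$, and reads off nonnegativity; this is exactly the identity the paper itself records in \eqref{124}, pushed to the endpoint $\gamma=-(\tfrac{d-2}{2})^2$, i.e.\ $\sigma=\tfrac{d-2}{2}$. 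The extension to $\dot H^1(\R^d)$ by density of $C_c^\infty(\R^d\setminus\{0\})$ (valid for $d\geq 3$ since a point has zero $\dot H^1$-capacity there) together with Fatou on the left and norm convergence on the right is also fine. One trivial remark: your parenthetical that the inequality ``fails with this constant'' in $d=2$ is not quite right --- there the constant $(\tfrac{d-2}{2})^2$ is $0$ and the statement is vacuously true; it is the sharp positive constant that is unavailable, and indeed the density step is what genuinely needs $d\geq 3$ (the cutoff error scales like $\varepsilon^{d-2}$). This does not affect the proof.
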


\begin{lemma}[Equivalence of Sobolev norm, \cite{KilMiaVisZhaZhe18, MiaSuZhe22}]\label{Equivalence of Sobolev norm}
Let $d \geq 2$ and $\gamma \geq -\frac{(d-2)^2}{4}$.
\begin{itemize}
\item
Let $-d < s < 2 + (d-2)\beta$ for $\beta$ given in \eqref{125} and $s \neq 0$.
If $1 < p < \infty$ satisfies $\max\{0,\frac{s+\sigma}{d}\} < \frac{1}{p} < \min\{1,\frac{d-\sigma}{d},\frac{d+s}{d}\}$, then
\begin{align*}
	\|(-\Delta)^\frac{s}{2}f\|_{L^p}
		\lesssim \|(-\Delta_\gamma)^\frac{s}{2}f\|_{L^p}
\end{align*}
for any $f \in C_c^\infty(\R^d \setminus \{0\})$, where $\sigma$ is given in \eqref{124}.
\item
Let $- 2 - (d-2)\beta < s < d$ and $s \neq 0$.
If $1 < p < \infty$ satisfies $\max\{0,\frac{s}{d},\frac{\sigma}{d}\} < \frac{1}{p} < \min\{1,\frac{d-\sigma+s}{d}\}$, then
\begin{align*}
	\|(-\Delta_\gamma)^\frac{s}{2}f\|_{L^p}
		\lesssim \|(-\Delta)^\frac{s}{2}f\|_{L^p}
\end{align*}
for any $f \in C_c^\infty(\R^d)$.
\end{itemize}
\end{lemma}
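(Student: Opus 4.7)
The lemma is quoted from \cite{KilMiaVisZhaZhe18, MiaSuZhe22}, and I would follow their heat-kernel approach. The plan is to represent both $(-\Delta_\gamma)^{s/2}$ and $(-\Delta)^{s/2}$ through the heat semigroups $e^{-t\mathcal{L}_\gamma}$ and $e^{t\Delta}$ (with $\mathcal{L}_\gamma:=-\Delta_\gamma$) and compare the resulting integral kernels pointwise. The starting input is the two-sided Gaussian bound
\[
	e^{-t\mathcal{L}_\gamma}(x,y) \sim t^{-d/2}\Big(1\wedge\tfrac{\sqrt{t}}{|x|}\Big)^{\sigma}\Big(1\wedge\tfrac{\sqrt{t}}{|y|}\Big)^{\sigma}e^{-c|x-y|^2/t},
\]
with $\sigma$ as in \eqref{124}, which captures how the inverse-square potential distorts diffusion near the origin while reducing to the free Gaussian away from it.

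Next, I would invoke Balakrishnan's subordination formula $(-\Delta_\gamma)^{-s/2} = \Gamma(s/2)^{-1}\int_0^\infty t^{s/2-1}e^{-t\mathcal{L}_\gamma}\,dt$ for $s>0$ (and its adjoint for $s<0$), together with the analogous formula for $-\Delta$. Combining these with the heat-kernel bound gives a pointwise estimate for the kernel of the comparison operator $(-\Delta)^{s/2}(-\Delta_\gamma)^{-s/2}$: it is dominated by a Riesz-type factor $|x-y|^{s-d}$ times weights in $|x|$ and $|y|$ whose exponents are determined by $\sigma$ and $s$. Part (ii) of the lemma is then obtained symmetrically by exchanging the roles of the two operators, which corresponds to the adjoint/inverse kernel.

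Finally, boundedness on $L^p$ reduces to a weighted fractional-integral estimate of Stein-Weiss type. The range $\max\{0,\frac{s+\sigma}{d}\}<\frac{1}{p}<\min\{1,\frac{d-\sigma}{d},\frac{d+s}{d}\}$ (and its swapped counterpart in part (ii)) is precisely the range in which the combined weighted Riesz operator is bounded on $L^p(\R^d)$; each endpoint corresponds to a failure of either the $|x|^{-\sigma}$ Hardy weight or the fractional-integration decay. I expect the main obstacle to be the singular regime $\sqrt{t}\gtrsim |x|\vee|y|$, where the $\sigma$-corrections in the heat-kernel bound dominate: there one must exploit the hypothesis $\gamma\geq-(d-2)^2/4$ (equivalently $\sigma\leq (d-2)/2$) to keep the weights $L^p$-integrable, and the upper endpoint $s<2+(d-2)\beta$ in part (i) (resp.\ $s>-2-(d-2)\beta$ in part (ii)) marks exactly the last index for which the subordination integral converges, beyond which the Riesz representation breaks down and the norm equivalence fails.
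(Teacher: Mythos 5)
The paper itself does not prove this lemma; it is imported verbatim from \cite{KilMiaVisZhaZhe18, MiaSuZhe22} and used as a black box in the preliminaries. Your sketch does correctly identify the main ingredients of the proof in the first of those references: the two-sided heat-kernel bounds of Liskevich--Sobol/Milman--Semenov type with the weights $(1\wedge \sqrt t/|x|)^{\sigma}$, a comparison of fractional powers built from the semigroup, and a reduction to a Stein--Weiss / weighted Riesz-potential estimate that produces the stated $p$-range. So the overall direction is consistent with the cited proof.

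There are, however, two places where the argument as you have written it would not close. First, Balakrishnan's formula $(-\Delta_\gamma)^{-s/2} = \Gamma(s/2)^{-1}\int_0^\infty t^{s/2-1} e^{-t\mathcal{L}_\gamma}\,dt$, together with its adjoint, only represents the fractional power directly for $|s|<2$; the lemma is asserted for the much larger interval $-d < s < 2 + (d-2)\beta$, which for $\gamma>0$ already extends past $s=d$. To reach such $s$ one must either iterate the subordination (introducing finite differences $(I-e^{-t\mathcal{L}_\gamma})^k$) or, as \cite{KilMiaVisZhaZhe18} actually does, pass to the square-function and Mellin-multiplier machinery attached to $\mathcal{L}_\gamma$; a single subordination integral for $s\geq 2$ simply diverges. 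Second, bounding the kernel of $(-\Delta)^{s/2}(-\Delta_\gamma)^{-s/2}$ by a weighted Riesz potential is more delicate than you suggest: the three regimes $\sqrt t \lesssim |x|\wedge|y|$, $|x|\wedge|y| \lesssim \sqrt t \lesssim |x|\vee|y|$, and $\sqrt t \gtrsim |x|\vee|y|$ produce weights in $|x|$ and $|y|$ with exponents that depend on both $\sigma$ and $s$, and one must track these carefully across the regimes to arrive at exactly the stated endpoints $\tfrac{1}{p}>\tfrac{s+\sigma}{d}$ and $\tfrac{1}{p}<\tfrac{d-\sigma}{d}$. In short: the plan is the right one, but the subordination step needs to be replaced or extended for $|s|\geq 2$, and the kernel comparison needs the case analysis spelled out before the Stein--Weiss reduction can actually be invoked.
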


\begin{theorem}[Dispersive estimate, \cite{MiaSuZhe22}]\label{Dispersive estimate}
Let $d \geq 3$, $\gamma \geq - (\frac{d-2}{2})^2$, and $\frac{1}{2} \leq \frac{1}{p} < \min\{1,\frac{d-\sigma}{d}\}$, where $\sigma$ is given in \eqref{124}.
Then, we have
\begin{align*}
	\|e^{it\Delta_\gamma}f\|_{L_x^{p'}}
		\lesssim |t|^{-d(\frac{1}{2}-\frac{1}{p})}\|f\|_{L^p}
\end{align*}
for $t \neq 0$.
\end{theorem}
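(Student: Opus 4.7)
The plan is to reduce the estimate to an interpolation between two endpoint bounds: the trivial $L^2 \to L^2$ estimate and an almost-endpoint pointwise kernel bound. For $p=2$ (hence $p'=2$) the claim collapses to $\|e^{it\Delta_\gamma}f\|_{L^2} = \|f\|_{L^2}$, which holds by Stone's theorem because $-\Delta_\gamma$ is self-adjoint; the prefactor $|t|^{-d(1/2-1/p)} = 1$ is automatic. The substance of the theorem lies in producing a second bound with a genuine $|t|^{-d/2}$-type decay, against which one can interpolate.

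To obtain that second bound I would exploit the spherical symmetry of the potential and construct the kernel of $e^{it\Delta_\gamma}$ by a spherical-harmonic decomposition. Write $f(x)=\sum_{\ell\ge 0}\sum_k a_{\ell,k}(r)\,Y_{\ell,k}(x/|x|)$ with $r=|x|$. The operator $-\Delta_\gamma$ preserves each angular sector, and under the unitary substitution $a_{\ell,k}(r)=r^{-(d-1)/2}g_{\ell,k}(r)$ it reduces on that sector to a one-dimensional Bessel operator with index
\[
\nu_\ell = \sqrt{\bigl(\ell+\tfrac{d-2}{2}\bigr)^{2}+\gamma}.
\]
The Schrödinger group generated by this Bessel operator has the explicit Hankel–Mehler kernel built from $J_{\nu_\ell}$, so summing over $(\ell,k)$ gives a closed-form expression for the full kernel $K_t(x,y)$ of $e^{it\Delta_\gamma}$. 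Note that the lowest index is $\nu_0 = \tfrac{d-2}{2}-\sigma$, which is exactly where the parameter $\sigma$ from \eqref{124} enters.

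From this Bessel representation I would derive a pointwise bound of the form $|K_t(x,y)|\lesssim |t|^{-d/2}\, w_\sigma(x,y;t)$, where the weight $w_\sigma$ encodes the singular behaviour at $|x|,|y|\to 0$ inherited from the small-argument asymptotics $J_\nu(z)\sim z^\nu$ of the lowest angular modes. Inserting this kernel bound into the pairing $\langle e^{it\Delta_\gamma}f,g\rangle$ and applying Hölder shows that the weight is integrable against $|f|^p$ and $|g|^p$ precisely when $\tfrac{1}{p}<\tfrac{d-\sigma}{d}$, which is exactly the hypothesis of the theorem; this yields a near-endpoint $L^{p_0}\to L^{p_0'}$ estimate with the correct $|t|^{-d/2}$ factor for every such $p_0$. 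Riesz–Thorin interpolation with the unitary $L^2$ bound then fills in the full range $\tfrac12\le \tfrac1p<\min\{1,\tfrac{d-\sigma}{d}\}$ with the required decay $|t|^{-d(1/2-1/p)}$.

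The main obstacle is the pointwise kernel bound itself. One must estimate $J_{\nu_\ell}$ uniformly in both the (non-half-integer) index $\nu_\ell$ and its argument, handling separately the oscillatory regime and the small-argument regime, and then sum cleanly over $\ell$ without losing the sharp $|t|^{-d/2}$ gain; the index constraint $\tfrac{1}{p}<\tfrac{d-\sigma}{d}$ is intrinsic here and cannot be avoided once $\gamma<0$, where $\sigma>0$ genuinely obstructs the classical $L^1\to L^\infty$ endpoint. For $\gamma=0$ this analysis degenerates to the standard Mehler-kernel computation, which is a useful sanity check on the scheme.
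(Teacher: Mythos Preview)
The paper does not prove this theorem. It is quoted verbatim from \cite{MiaSuZhe22} (Miao--Su--Zheng) and used only as a black-box tool, for instance in the proof of Lemma~\ref{Variational characterization} to show that $\|e^{it_n\Delta_\gamma}\phi\|_{L^{2^\ast}}\to 0$ when $|t_n|\to\infty$. There is therefore nothing in the present paper to compare your proposal against.

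For what it is worth, the strategy you outline---spherical-harmonic decomposition, reduction on each angular sector to a Bessel operator with index $\nu_\ell=\sqrt{(\ell+\tfrac{d-2}{2})^2+\gamma}$, explicit Hankel propagator, pointwise kernel bounds with a weight governed by the bottom index $\nu_0=\tfrac{d-2}{2}-\sigma$, and then complex interpolation with the unitary $L^2$ bound---is indeed the route taken in the literature for this estimate (see also Fanelli--Felli--Fontelos--Primo and the earlier Planchon--Stalker--Tahvildar-Zadeh work). Your identification of the obstruction at $\tfrac{1}{p}=\tfrac{d-\sigma}{d}$ as coming from the small-argument behaviour $J_{\nu_0}(z)\sim z^{\nu_0}$ is correct, and your remark that for $\gamma<0$ the classical $L^1\to L^\infty$ endpoint genuinely fails is the right intuition. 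What you have written is a reasonable outline, but it is only an outline: the uniform-in-$\nu$ Bessel estimates and the summation over $\ell$ are where all the work lies, and you have not actually carried them out. If you want to supply a self-contained proof rather than a citation, those are the steps that need to be written in full.
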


To state the Strichartz estimates, we define the following $\dot{H}^s$-admissible.

\begin{definition}
We say that $(q,r)$ is $\dot{H}^s$-admissible for $0 \leq s \leq 1$ if $(q,r)$ satisfies
\begin{align*}
	\frac{2}{q} + \frac{d}{r}
		= \frac{d}{2} - s
\end{align*}
and belongs to
\begin{align*}
		\left\{
		\begin{array}{ll}
		\hspace{-0.2cm}
			\{(q,r) ; 2 \leq q \leq \infty, \frac{6}{3-2s} \leq r \leq \frac{6}{1-2s}\} & (0 \leq s < \frac{1}{2}, d = 3) \\
		\hspace{-0.2cm}
			\{(q,r) ; \frac{4}{3-2s} < q \leq \infty, \frac{6}{3-2s} \leq r < \infty\} & (\frac{1}{2} \leq s \leq 1, d = 3) \\
		\hspace{-0.2cm}
			\{(q,r) ; 2 \leq q \leq \infty, \frac{2d}{d-2s} \leq r \leq \frac{2d}{d-2s-2}\} & (d \geq 4).
		\end{array}
		\right.
\end{align*}
We denote
\begin{align*}
	\Lambda_s
		:= \{(q,r) : (q,r)\text{ is }\dot{H}^s\text{-admissible}.\}.
\end{align*}
\end{definition}

\begin{theorem}[Strichartz estimates, \cite{BurPlaStaTah03}]\label{Strichartz estimates}
Let $d \geq 3$ and $\gamma > 0$.
If $(q_1,r_1) \in \Lambda_s$, $(q_2,r_2) \in \Lambda_0$, and $t_0 \in \overline{I}$, then the following estimates hold.
\begin{align*}
	\|e^{it\Delta_\gamma}f\|_{L_t^{q_1}L_x^{r_1}}
		\lesssim \|f\|_{\dot{H}^s}, \quad 
	\left\|\int_{t_0}^te^{i(t-s)\Delta_\gamma}F(\cdot,s)ds\right\|_{L_t^{q_1}(I;L_x^{r_1})}
		\lesssim \|F\|_{L_t^{q_2'}(I;L_x^{r_2'})}.
\end{align*}
\end{theorem}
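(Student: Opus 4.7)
The plan is to deduce these estimates from the abstract Keel--Tao Strichartz theorem applied to $U(t):=e^{it\Delta_\gamma}$, obtaining first the $\dot{H}^0$-admissible case and then bootstrapping to general $\dot{H}^s$-admissible pairs via Sobolev embedding and the norm equivalence (Lemma \ref{Equivalence of Sobolev norm}). Throughout, one uses that $-\Delta_\gamma$ is self-adjoint, so $U(t)$ is unitary on $L^2$ and its Borel functional calculus commutes with $U(t)$.

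\textbf{Step 1: the $\dot{H}^0$ case.} The two inputs for Keel--Tao are the energy estimate $\|U(t)f\|_{L^2}=\|f\|_{L^2}$ (immediate from unitarity) and a dispersive bound. Theorem \ref{Dispersive estimate} supplies
\[
\|e^{it\Delta_\gamma}f\|_{L^{p'}}\lesssim|t|^{-d(\frac{1}{2}-\frac{1}{p})}\|f\|_{L^{p}}
\]
for $\tfrac{1}{2}\le\tfrac{1}{p}<\min\{1,(d-\sigma)/d\}$; since $\gamma>0$ forces $\sigma<0$, the upper cutoff is $1$. Sending $p\to 1^{+}$ (or invoking the $L^1\to L^\infty$ bound directly, as in Burq--Planchon--Stalker--Tahvildar-Zadeh) and feeding the pair of estimates into Keel--Tao produce, for any $(q,r),(q_2,r_2)\in\Lambda_0$,
\[
\|e^{it\Delta_\gamma}f\|_{L_t^{q}L_x^{r}}\lesssim\|f\|_{L^2},\qquad\Bigl\|\int_{t_0}^{t}e^{i(t-s)\Delta_\gamma}F(\cdot,s)\,ds\Bigr\|_{L_t^{q}(I;L_x^{r})}\lesssim\|F\|_{L_t^{q_2'}(I;L_x^{r_2'})},
\]
including the endpoint $(q,r)=(2,2d/(d-2))$ when $d\ge 4$.

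\textbf{Step 2: upgrading to $\dot{H}^s$-admissible pairs.} For $(q_1,r_1)\in\Lambda_s$, set $1/\tilde r_1 = 1/r_1 + s/d$; a direct count of exponents shows $(q_1,\tilde r_1)\in\Lambda_0$. Sobolev embedding $\dot{W}^{s,\tilde r_1}\hookrightarrow L^{r_1}$, Lemma \ref{Equivalence of Sobolev norm} applied at the $L^{\tilde r_1}$ level to swap $(-\Delta)^{s/2}$ for $(-\Delta_\gamma)^{s/2}$, and the commutativity of the functional calculus with $U(t)$ yield
\[
\|e^{it\Delta_\gamma}f\|_{L_x^{r_1}}\lesssim\|(-\Delta)^{s/2}e^{it\Delta_\gamma}f\|_{L_x^{\tilde r_1}}\lesssim\|e^{it\Delta_\gamma}(-\Delta_\gamma)^{s/2}f\|_{L_x^{\tilde r_1}}.
\]
Taking the $L_t^{q_1}$ norm and applying the $\Lambda_0$ Strichartz estimate of Step 1 bounds this by $\|(-\Delta_\gamma)^{s/2}f\|_{L^2}=\|f\|_{\dot{H}_\gamma^s}\sim\|f\|_{\dot{H}^s}$, the final equivalence being Lemma \ref{Equivalence of Sobolev norm} at $p=2$. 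The inhomogeneous estimate follows by the same reduction: the $s$-derivative gain is absorbed only on the $(q_1,r_1)$ side, leaving the forcing $F$ at the $\dot{H}^0$ level where Step 1 applies verbatim.

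\textbf{Main obstacle.} The delicate point is the Keel--Tao endpoint $(q,r)=(2,2d/(d-2))$ for $d\ge 4$, because Theorem \ref{Dispersive estimate} is formulated only for $p>1$, whereas this endpoint genuinely needs the $L^1\to L^\infty$ decay. With $\gamma>0$ the potential is repulsive, $-\Delta_\gamma$ has no bound states, and a clean spectral representation (Hankel transform on each spherical-harmonic subspace) does furnish that decay. The cleanest route is to cite this directly; a self-contained alternative is to decompose $f=\sum_\ell f_\ell$ into spherical harmonics, reduce on each subspace to a one-dimensional Bessel propagator, apply a uniform-in-$\ell$ stationary-phase estimate, and reassemble via a vector-valued Littlewood--Paley square function. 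Step 2 is purely algebraic once Step 1 is in hand and is unaffected by this endpoint subtlety.
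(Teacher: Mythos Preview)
The paper does not prove this theorem at all; it is simply stated with a citation to Burq--Planchon--Stalker--Tahvildar-Zadeh \cite{BurPlaStaTah03}, so there is nothing to compare against. Your outline is in fact more than the paper provides, and it follows the standard route (energy $+$ dispersive $\Rightarrow$ Keel--Tao, then Sobolev to reach $\Lambda_s$).

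Two small remarks. First, for the inhomogeneous estimate as literally written---with $(q_1,r_1)\in\Lambda_s$ on the left but only $\|F\|_{L_t^{q_2'}L_x^{r_2'}}$ with $(q_2,r_2)\in\Lambda_0$ on the right---your reduction would produce $\|(-\Delta_\gamma)^{s/2}F\|_{L_t^{q_2'}L_x^{r_2'}}$, not $\|F\|_{L_t^{q_2'}L_x^{r_2'}}$; indeed the stated inequality cannot hold for $s>0$ by scaling. This is almost certainly a harmless imprecision in the paper (the inhomogeneous estimate is only ever used at $s=0$, or with a derivative placed on $F$), not a flaw in your argument. Second, your ``Main obstacle'' paragraph is well taken, but note that Keel--Tao in its abstract bilinear form actually only needs the untruncated $L^{1}\to L^{\infty}$ decay of $U(t)U(s)^*$, which for $\gamma>0$ is exactly what \cite{BurPlaStaTah03} establishes; you do not need to rebuild the Hankel-transform machinery yourself.
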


\begin{lemma}[Radial Sobolev inequality, \cite{Str77}]\label{Radial Sobolev inequality}
Let $p \geq 2$.
Then, we have
\begin{align*}
	\|f\|_{L^p(|x| \geq R)}^p
		\lesssim R^{-\frac{(d-1)(p-2)}{2}}\|f\|_{L^2(|x| \geq R)}^\frac{p+3}{2}\|f\|_{\dot{H}^1(|x| \geq R)}^\frac{p-1}{2}
\end{align*}
for any $f \in H_\text{rad}^1(\R^d)$.
\end{lemma}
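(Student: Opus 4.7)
The plan is to reduce the integral inequality to a pointwise radial decay estimate of Strauss type and then integrate it against $|f|^2$. The pointwise estimate I want is the familiar bound
\[
|f(r)|^2 \lesssim r^{-(d-1)} \|f\|_{L^2(|x|\geq r)} \|\nabla f\|_{L^2(|x|\geq r)} \qquad (r>0),
\]
valid for radial $f \in H^1(\R^d)$. To derive it, I would first assume $f \in C_c^\infty(\R^d)$ and radial, write $f(r)^2 = -2\int_r^\infty f(s) f'(s)\,ds$, multiply by $r^{d-1}$, use $r^{d-1} \leq s^{d-1}$ for $s \geq r$ to insert the surface-area factor inside the integral, and then apply Cauchy--Schwarz to rewrite the right-hand side as the product of $\|f\|_{L^2(|x|\geq r)}$ and $\|\nabla f\|_{L^2(|x|\geq r)}$. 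A standard density argument promotes the estimate to $H^1_{\mathrm{rad}}(\R^d)$.

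With the pointwise bound in hand, the second step is a simple interpolation: split
\[
\|f\|_{L^p(|x|\geq R)}^p = \int_{|x|\geq R} |f(x)|^{p-2}\, |f(x)|^2 \, dx,
\]
bound $|f(x)|^{p-2}$ by its pointwise radial decay, and then use $|x|^{-(d-1)(p-2)/2} \leq R^{-(d-1)(p-2)/2}$ on the region $|x|\geq R$ to pull the $R$-factor outside the integral. The remaining factors of $\|f\|_{L^2(|x|\geq |x|)}$ and $\|\nabla f\|_{L^2(|x|\geq |x|)}$ are monotone in $|x|$, so they can be replaced by their values at $|x|=R$, giving the desired expression in terms of $\|f\|_{L^2(|x|\geq R)}$ and $\|f\|_{\dot{H}^1(|x|\geq R)}$.

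There is no real obstacle; the only subtle point is making sure the exponents in the final estimate balance, since the pointwise estimate contributes $\tfrac{p-2}{2}$ powers each of the $L^2$ and $\dot{H}^1$ norms while the leftover $|f|^2$ contributes another two powers of $\|f\|_{L^2(|x|\geq R)}$. One should also verify (by a standard mollification/truncation) that the pointwise estimate at a given $r$ is legitimate for general radial $H^1$ functions, i.e.\ that the representative may be taken continuous outside the origin. Once these verifications are in place, the inequality follows immediately.
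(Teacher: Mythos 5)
Your derivation is the standard Strauss-type argument and is essentially correct: one first obtains the pointwise radial decay bound $|f(r)|^2 \lesssim r^{-(d-1)}\|f\|_{L^2(|x|\geq r)}\|\nabla f\|_{L^2(|x|\geq r)}$ from the fundamental theorem of calculus and Cauchy--Schwarz, then interpolates by writing $|f|^p = |f|^{p-2}\,|f|^2$ and taking $|f|^{p-2}$ out at its sup on $\{|x|\geq R\}$. Notice, however, that this argument yields the exponents $\tfrac{p+2}{2}$ on the $L^2$ norm and $\tfrac{p-2}{2}$ on the $\dot{H}^1$ norm, rather than the $\tfrac{p+3}{2}$ and $\tfrac{p-1}{2}$ displayed in the lemma. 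Your exponents are the correct ones. Under the rescaling $f\mapsto f(\lambda\,\cdot\,)$, $R\mapsto R/\lambda$, both sides scale like $\lambda^{-d}$ precisely when the exponents are $\tfrac{p+2}{2}$ and $\tfrac{p-2}{2}$, whereas the printed exponents would require $d=1$. Moreover, the only place the lemma is invoked (inside the proof of Lemma~\ref{Exponential decay}, with $p=2^\ast$) uses the bound $R^{-\frac{2(d-1)}{d-2}}\|u\|_{L^2}^{\frac{2(d-1)}{d-2}}\|u\|_{\dot{H}^1}^{\frac{2}{d-2}}$, and at $p=\tfrac{2d}{d-2}$ one has $\tfrac{p+2}{2}=\tfrac{2(d-1)}{d-2}$ and $\tfrac{p-2}{2}=\tfrac{2}{d-2}$, which matches your exponents and not the stated ones. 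So the displayed inequality in Lemma~\ref{Radial Sobolev inequality} contains a typo, and your argument proves the intended (and correct) statement. The paper gives no proof of its own --- it simply cites Strauss~\cite{Str77} --- so there is no competing in-text argument to compare against; the computation you outline is exactly the right one, and you should simply record the corrected exponents.
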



\begin{lemma}[\cite{Kai17, KilMiaVisZhaZhe17}]\label{Cor of local smoothing}
Let $\gamma > 0$ and $f \in \dot{H}^1(\R^d)$.
Then, we have
\begin{align*}
	& \|\nabla e^{it\Delta_\gamma}f\|_{L_t^\frac{d+2}{d-2}(B_T(\tau);L_x^\frac{d(d+2)}{d^2-d+2}(B_R(z)))} \\
		& \lesssim T^\frac{(d-2)^2}{2(d+2)^2}R^\frac{d^3+4d-16}{2(d+2)^2}\|e^{it\Delta_\gamma}f\|_{S(\R)}^\frac{d-2}{d+2}\|f\|_{\dot{H}^1}^\frac{4}{d+2}
		 + T^\frac{(d-2)^2}{4(d+2)^2}R^\frac{d^3+d^2-12}{2(d+2)^2}\|e^{it\Delta_\gamma}f\|_{S(\R)}^\frac{d-2}{2(d+2)}\|f\|_{\dot{H}^1}^\frac{d+6}{2(d+2)},
\end{align*}
where $B_r(c)$ denotes a ball having a radius $r$ and a center $c$ and the implicit constant is independent of $f$, $R$, $T$, $\tau$, and $z$.
\end{lemma}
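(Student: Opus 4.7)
The plan is to combine the Kato-type local smoothing estimate for $e^{it\Delta_\gamma}$ with the $L^2$-admissible Strichartz estimate for $\nabla u := \nabla e^{it\Delta_\gamma}f$, using H\"older's inequality to localize to the cylinder $B_T(\tau)\times B_R(z)$ and a Littlewood--Paley decomposition of the initial data to introduce the $\|u\|_{S(\R)}$-norm on the right-hand side. Under $\gamma>0$, the relevant local smoothing bound
\begin{align*}
    \|\nabla e^{it\Delta_\gamma}f\|_{L^2_tL^2(B_R(z))}
        \lesssim R^{1/2}\|f\|_{\dot H^{1/2}}
\end{align*}
follows from the classical Kato smoothing for $-\Delta$ via the Sobolev-norm equivalence of Lemma \ref{Equivalence of Sobolev norm}, uniformly in $z$; and the Strichartz bound $\|\nabla u\|_{L^{(d+2)/(d-2)}_tL^{2d(d+2)/(d^2-2d+8)}_x}\lesssim \|f\|_{\dot H^1}$ is a direct application of Theorem \ref{Strichartz estimates} to the pair $\bigl((d+2)/(d-2),\,2d(d+2)/(d^2-2d+8)\bigr)\in\Lambda_0$.

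First, H\"older in space on $B_R(z)$ — noting that $d(d+2)/(d^2-d+2)<2d(d+2)/(d^2-2d+8)$ for every $d\geq 3$, with H\"older weight $R^{(d-2)/2}$ — converts the Strichartz bound into the auxiliary estimate
\begin{align*}
    \|\nabla u\|_{L^{(d+2)/(d-2)}_tL^{d(d+2)/(d^2-d+2)}_x(B_T(\tau)\times B_R(z))}
        \lesssim R^{(d-2)/2}\|f\|_{\dot H^1}=:B.
\end{align*}
Second, the Kato local smoothing bound together with H\"older in space (weight $R^{(d-2)^2/(2(d+2))}$ from $L^2_x(B_R)\hookrightarrow L^{d(d+2)/(d^2-d+2)}_x(B_R)$) and H\"older in time on $B_T(\tau)$ yields a bound of the form $T^{\ast}R^{\ast}\|f\|_{\dot H^{1/2}}$. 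To trade the $\|f\|_{\dot H^{1/2}}$-factor for a combination of $\|u\|_{S(\R)}$ and $\|f\|_{\dot H^1}$, decompose $f=P_{\leq N}f+P_{>N}f$ at a dyadic scale $N$: the high-frequency piece contributes $N^{-1/2}\|f\|_{\dot H^1}$ via $\|P_{>N}f\|_{\dot H^{1/2}}\lesssim N^{-1/2}\|f\|_{\dot H^1}$, while the low-frequency piece is controlled by Bernstein against $\|e^{it\Delta_\gamma}P_{\leq N}f\|_{S(\R)}$. Optimizing in $N$ produces the first term on the right-hand side of the statement, which I denote by $A$. The second term is then recovered as the geometric mean $A^{1/2}B^{1/2}$: a direct exponent check confirms that $(d^3+4d-16)/(4(d+2)^2)+(d-2)/4=(d^3+d^2-12)/(2(d+2)^2)$, that the $T$-exponent halves to $(d-2)^2/(4(d+2)^2)$, and that the $\|u\|_S$- and $\|f\|_{\dot H^1}$-exponents $(d-2)/(2(d+2))$ and $(d+6)/(2(d+2))$ come out on the nose.

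The principal obstacle will be the precise bookkeeping of the six scaling exponents against the Littlewood--Paley threshold $N$, the two H\"older weights on $R$ and $T$, and the Bernstein and Mikhlin powers on the low-frequency piece. In particular, one must verify that the Littlewood--Paley projectors $P_N$ adapted to $-\Delta_\gamma$ satisfy the requisite Bernstein and Mikhlin bounds, which is exactly where the hypothesis $\gamma>0$ intervenes via Lemma \ref{Equivalence of Sobolev norm}. Uniformity in the translation parameters $\tau$ and $z$ is automatic, since the Kato local smoothing bound and each H\"older step is translation-invariant in the relevant sense; likewise the inequality is phrased so that one may always replace $B_T(\tau)\times B_R(z)$ by the slightly larger cylinder used in whichever auxiliary estimate one applies, with no loss in the $T^aR^b$ factors.
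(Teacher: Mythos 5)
The paper does not give a proof of this lemma; it cites Kai \cite{Kai17} and Killip--Miao--Visan--Zhang--Zheng \cite{KilMiaVisZhaZhe17}, so I can only judge your sketch on its own terms. The ingredients you list -- Kato local smoothing for $e^{it\Delta_\gamma}$, a Strichartz bound for $\nabla e^{it\Delta_\gamma}f$, H\"older on the cylinder, a Littlewood--Paley split, and interpolation -- are indeed the standard ones, and your arithmetic check that the second term equals $A^{1/2}B^{1/2}$ with $B=R^{(d-2)/2}\|f\|_{\dot H^1}$ is correct. But the central step as you describe it does not go through. You propose to pass from the Kato $L^2_t$ norm to the target $L^{(d+2)/(d-2)}_t(B_T(\tau))$ norm by H\"older in time on a bounded interval; this requires $\frac{d+2}{d-2}\leq 2$, i.e.\ $d\geq 6$, whereas the paper's main use of the lemma is precisely $3\leq d\leq 6$. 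Moreover, even when $d>6$ the na\"ive chain Kato $\to$ H\"older in space $\to$ H\"older in time yields a $T$-exponent of $\frac{d-6}{2(d+2)}$, not the lemma's $\frac{(d-2)^2}{2(d+2)^2}$, so your sequence of steps cannot be reorganized to hit the claimed exponents.

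Two further points of substance. First, the ``geometric mean'' identity for the second term is a consistency check on the exponents, not an explanation of where it comes from: if your argument proved $\mathrm{LHS}\lesssim A$ outright, adding $A^{1/2}B^{1/2}$ would be redundant, so the two terms must emerge independently (they correspond to the two contributions of the Littlewood--Paley split after the $N$-optimization is fed through a genuinely mixed interpolation between the localized $L^2_{t,x}$ smoothing norm and a global Strichartz/Bernstein norm, not after H\"older in time alone). Second, the bound $\|\nabla e^{it\Delta_\gamma}f\|_{L^{(d+2)/(d-2)}_t L^{2d(d+2)/(d^2-2d+8)}_x}\lesssim\|f\|_{\dot H^1}$ is not a ``direct application'' of Theorem~\ref{Strichartz estimates}: $\nabla$ does not commute with $\Delta_\gamma$, so one must first replace $\nabla$ by $(-\Delta_\gamma)^{1/2}$ via Lemma~\ref{Equivalence of Sobolev norm} (here $\gamma>0$, so $\sigma<0$ and the required range of $p$ is available), commute through the propagator, and return; you invoke the equivalence for Kato but not for the Strichartz half, where it is equally essential.
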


To state localized virial identity, we define the following functions for each $R > 0$ :
A cut-off function $\mathscr{X}_R\in C_0^\infty(\R^d)$ is radially symmetric and satisfies
\begin{equation}
	\mathscr{X}_R(x)
		:= R^2\mathscr{X}\left(\frac{x}{R}\right),\ \text{ where }\ 
	\mathscr{X}(x)
		:= \label{126}
		\left\{
		\begin{array}{cl}
		\hspace{-0.2cm}\displaystyle{ |x|^2 } & \quad (0\leq |x|\leq1), \\
		\hspace{-0.2cm}smooth & \quad (1\leq |x|\leq 3), \\
		\hspace{-0.2cm}0 & \quad (3\leq |x|),
		\end{array}
		\right.
\end{equation}
having $\mathscr{X}''(|x|)\leq 2$ for each $x \in \R^d$.
A cut-off function $\mathscr{Y}_R \in C_0^\infty(\R^d)$ is radially symmetric and satisfies
\begin{equation}
	\mathscr{Y}_R(x)
		:= \mathscr{Y}\left(\frac{x}{R}\right),\ \text{ where }\ 
	\mathscr{Y}(x)
		:= \label{127}
		\left\{
		\begin{array}{cl}
		\hspace{-0.2cm} 1 & \quad (0 \leq |x| \leq 1), \\
		\hspace{-0.2cm} smooth & \quad (1 \leq |x| \leq 2), \\
		\hspace{-0.2cm} 0 & \quad (2 \leq |x|).
		\end{array}
		\right.
\end{equation}

\begin{proposition}[Localized virial identity]\label{Virial identity}
Let $w$ be $\mathscr{X}_R$ or $\mathscr{Y}_R$.
For the solution $u(t)$ to \eqref{NLS}, we define
\begin{align*}
	I_w(t)
		:=\int_{\R^d}w(x)|u(t,x)|^2dx.
\end{align*}
Then, it follows that
\begin{align*}
	I_w'(t)
		= 2\text{Im}\int_{\R^d}\overline{u}(t,x)\nabla u(t,x)\cdot\nabla w(x)dx,
\end{align*}
If $u$ is a radial function, then we have
\begin{align*}
	I_w'(t)
		& = 2\text{Im}\int_{\R^d}\frac{x\cdot\nabla u}{r}\overline{u}w'dx, \\
	I_w''(t)
		& = 4\int_{\R^d}w''(|x|)|\nabla u(t,x)|^2dx - \int_{\R^d}F_2|u(t,x)|^{2^\ast}dx \\
		&\hspace{4.0cm} - \int_{\R^d}F_3|u(t,x)|^2dx + 4\int_{\R^d}w'(|x|)\frac{\gamma}{|x|^3}|u(t,x)|^2dx,
\end{align*}
where
\begin{align*}
	F_2(w,|x|)
		& := \frac{4}{d}\left\{w''(|x|) + \frac{d-1}{|x|}w'(|x|)\right\}, \\
	F_3(w,|x|)
		& := w^{(4)}(|x|) + \frac{2(d-1)}{|x|}w^{(3)}(|x|) + \frac{(d-1)(d-3)}{|x|^2}w''(|x|) + \frac{(d-1)(3-d)}{|x|^3}w'(|x|).
\end{align*}
\end{proposition}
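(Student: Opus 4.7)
The proof is a direct computation based on the local mass conservation law. Writing \eqref{NLS} as $\partial_t u = i\Delta u - iVu + i|u|^{\frac{4}{d-2}}u$ with $V = \gamma/|x|^2$, and noting that multiplication by $\overline u$ turns the nonlinear and potential contributions into purely real quantities whose imaginary parts vanish, one obtains $\partial_t|u|^2 = -2\operatorname{div}\mathcal{J}$ for the mass current $\mathcal{J} := \operatorname{Im}(\overline u\nabla u)$. Differentiating $I_w$ under the integral and integrating by parts once yields the first identity $I_w'(t) = 2\operatorname{Im}\int \overline u\,\nabla u\cdot\nabla w\,dx$. Since $\mathscr{X}_R$ and $\mathscr{Y}_R$ are radial, $\nabla w = w'(|x|)\,x/|x|$, and when $u$ is radial one has $\nabla u = u_r\,x/|x|$, which immediately gives the second form for $I_w'$.

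For the second derivative, the key object is $\partial_t\mathcal{J}$. A short calculation using the equation, together with the algebraic identity $|u|^2\nabla(|u|^{\frac{4}{d-2}}) = \tfrac{2}{d}\nabla(|u|^{2^\ast})$, produces
\begin{align*}
	\partial_t\mathcal{J} = \operatorname{Re}\bigl(\overline u\,\nabla \Delta u - \Delta\overline u\,\nabla u\bigr) - |u|^2\nabla V + \tfrac{2}{d}\nabla(|u|^{2^\ast}).
\end{align*}
Pairing with $2\nabla w$ and integrating by parts term by term decomposes $I_w''(t)$ into three independent pieces. The nonlinear term collapses to $-\tfrac{4}{d}\int \Delta w\,|u|^{2^\ast}\,dx$ after one integration by parts. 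The potential term, via $\nabla V\cdot\nabla w = -2\gamma\,w'(|x|)/|x|^3$ for radial $w$, becomes $4\int \gamma\,w'(|x|)|u|^2/|x|^3\,dx$, matching the last term in the statement. The kinetic term, after two further integrations by parts and the identity $\operatorname{Re}(\overline u\,\Delta u) = \tfrac{1}{2}\Delta|u|^2 - |\nabla u|^2$, yields $4\int \partial_j\partial_k w\,\operatorname{Re}(\partial_j u\,\overline{\partial_k u})\,dx - \int \Delta^2 w\,|u|^2\,dx$.

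What remains is to match these with the stated $F_2$ and $F_3$. Radiality of $u$ and $w$ reduces the Hessian contraction to $\partial_j\partial_k w\,\partial_j u\,\overline{\partial_k u} = w''(|x|)|\nabla u|^2$ (the tangential part of $\operatorname{Hess} w$ annihilates the purely radial $\nabla u$), which produces the $4\int w''(|x|)|\nabla u|^2\,dx$ term. Expanding $\Delta w$ and $\Delta^2 w$ in polar coordinates with $\Delta = \partial_r^2 + \tfrac{d-1}{r}\partial_r$ applied once and twice respectively reproduces exactly the expressions for $F_2 = \tfrac{4}{d}\Delta w$ and $F_3 = \Delta^2 w$ written in the statement. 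The only genuinely delicate point I anticipate is justifying the fourth-order integration by parts for solutions of merely $H^1_\gamma$ regularity, since $\nabla\Delta u$ then lives only distributionally. I would resolve this in the standard way by first deriving the identity for approximating solutions with smoother initial data (e.g.\ in $H^2_\gamma\cap L^2$) and passing to the limit in the final formula, which involves only $|\nabla u|^2$, $|u|^{2^\ast}$, and $|u|^2$ integrated against smooth, bounded functions built from the cut-offs $\mathscr{X}_R$ or $\mathscr{Y}_R$.
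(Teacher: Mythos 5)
Your proof is correct. The paper states this proposition without proof (it is a standard localized virial computation), and your derivation — passing through the local conservation law $\partial_t|u|^2 = -2\operatorname{div}\mathcal{J}$, pairing $\partial_t\mathcal{J}$ with $2\nabla w$, isolating the kinetic, potential, and nonlinear pieces via integration by parts, and then using radiality of $u$ and $w$ to reduce the Hessian contraction to $w''|\nabla u|^2$ and to expand $\Delta w$, $\Delta^2 w$ in polar form — reproduces every term, including the coefficients $(d-1)(d-3)/|x|^2$ and $(d-1)(3-d)/|x|^3$ in $F_3$; the closing remark about justifying the formal manipulations at $\dot H^1_\gamma$ regularity by an approximation argument is the standard and appropriate caveat.
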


\begin{lemma}[Radial linear profile decomposition, \cite{KilMiaVisZhaZhe17}]\label{Linear profile decomposition}
Let $\{f_n\}$ be a bounded sequence in $\dot{H}_\text{rad}^1(\R^d)$.
Passing to a subsequence, there exists $J^\ast \in \{0, \ldots, \infty\}$, profiles $\{\phi^j\}_{j=0}^{J^\ast} \subset \dot{H}_\text{rad}^1(\R^d)$ satisfying $\phi^0 \equiv 0$, $\phi^j \equiv 0$ for any $j \geq 1$ if $J^\ast = 0$, ``$\phi^j \not\equiv 0$ for any $1 \leq j \leq J^\ast$ and $\phi^j \equiv 0$ for any $j \geq J^\ast + 1$ if $1 \leq J^\ast < \infty$'', and $\phi^j \not\equiv 0$ for any $1 \leq j < \infty$ if $J^\ast = \infty$, time shifts $\{t_n^j\} \subset \R$, scale shifts $\{\lambda_n^j\} \subset (0,\infty)$, and remainders $\{R_n^J\} \subset \dot{H}_\text{rad}^1(\R^d)$ such that the following :
\begin{itemize}
\item
(Decomposition)
\begin{align*}
	f_n
		= \sum_{j=1}^J \phi_n^j + R_n^J
\end{align*}
for each $0 \leq J < \infty$ and each $n \in \N$, where $\phi_n^j := (e^{it_n^j\Delta_\gamma}\phi^j)_{[\lambda_n^j]}$,
\item
(Pythagorean decomposition)
\begin{align}
	& \lim_{n \rightarrow \infty}\Bigl\{\|f_n\|_{\dot{H}_\gamma^1}^2 - \sum_{j=1}^J\|\phi_n^j\|_{\dot{H}_\gamma^1}^2 - \|R_n^J\|_{\dot{H}_\gamma^1}^2\Bigr\}
		= 0, \label{134} \\
	& \lim_{n \rightarrow \infty}\Bigl\{\|f_n\|_{L^{2^\ast}}^{2^\ast} - \sum_{j=1}^J\|\phi_n^j\|_{L^{2^\ast}}^{2^\ast} - \|R_n^J\|_{L^{2^\ast}}^{2^\ast}\Bigr\}
		= 0, \label{135} \\
	& \lim_{n \rightarrow \infty}\Bigl\{E_\gamma[f_n] - \sum_{j=1}^JE_\gamma[\phi_n^j] - E_\gamma[R_n^J]\Bigr\} \label{136}
		= 0,
\end{align}
\item
(Smallness property)
\begin{align}
	\lim_{J \rightarrow \infty}\limsup_{n \rightarrow \infty}\|e^{it\Delta_\gamma}R_n^J\|_{S(\R)}
		= 0, \label{115}
\end{align}
\item
(Orthogonality property)
\begin{align}
	\frac{\lambda_n^j}{\lambda_n^k} + \frac{\lambda_n^k}{\lambda_n^j} + \frac{|t_n^j(\lambda_n^j)^2 - t_n^k(\lambda_n^k)^2|}{\lambda_n^j\lambda_n^k}
		\longrightarrow \infty \label{110}
\end{align}
as $n \rightarrow \infty$ for each $j \neq k$.
Moreover, we can assume that $t_n^j \equiv 0$ for any $n$ or $t_n^j \longrightarrow \pm \infty$ as $n \rightarrow \infty$.
\end{itemize}
\end{lemma}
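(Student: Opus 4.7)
The plan is to follow the Bahouri--Gérard/Keraani concentration-compactness procedure as adapted to the inverse-square setting in \cite{KilMiaVisZhaZhe17}. The argument rests on three ingredients: a refined Strichartz estimate, iterative weak-limit extraction, and asymptotic orthogonality of the resulting parameters. A simplifying feature of the radial hypothesis is that spatial translations play no role: the symmetry group reduces to scalings and time shifts only, which is reflected in the absence of $x_n^j$ in the statement.

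First I would establish (or invoke from \cite{KilMiaVisZhaZhe17}) a refined Strichartz inequality of the form
\begin{align*}
	\|e^{it\Delta_\gamma}f\|_{S(\R)}
		\lesssim \|f\|_{\dot{H}_\gamma^1}^{1-\theta}\Bigl(\sup_{N \in 2^\Z} N^{-\frac{d-2}{2}}\|e^{it\Delta_\gamma}P_N^\gamma f\|_{L_{t,x}^\infty}\Bigr)^\theta
\end{align*}
for some $\theta \in (0,1)$, where $P_N^\gamma$ denotes a Littlewood--Paley projector built from the heat semigroup of $-\Delta_\gamma$. The equivalence of Sobolev norms (Lemma \ref{Equivalence of Sobolev norm}) and standard harmonic analysis effectively reduce the estimate to the $\gamma=0$ case. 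The right-hand side is scale-invariant under $f\mapsto f_{[\lambda]}$ and identifies the concentration site.

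Next I would extract the profiles inductively. If $\|e^{it\Delta_\gamma}f_n\|_{S(\R)}\to 0$, set $J^\ast=0$. Otherwise the refined inequality produces $(\lambda_n^1,t_n^1)$ at which $e^{it\Delta_\gamma}f_n$ concentrates; the sequence $g_n^1:=(e^{-it_n^1\Delta_\gamma}f_n)_{[1/\lambda_n^1]}$ is bounded in $\dot{H}_\gamma^1$ and has a nontrivial weak limit $\phi^1$ along a subsequence, normalizing $t_n^1\equiv 0$ or $t_n^1\to\pm\infty$. Unitarity of $e^{it\Delta_\gamma}$ on $\dot{H}_\gamma^1$ combined with weak convergence yields \eqref{134} at $J=1$. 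Iterate on $R_n^J:=f_n-\sum_{j=1}^J\phi_n^j$: either $\|e^{it\Delta_\gamma}R_n^J\|_{S(\R)}\to 0$ (in which case the process terminates) or another profile is extracted. Summing Pythagoras gives $\sum_j\|\phi^j\|_{\dot{H}_\gamma^1}^2\leq\limsup\|f_n\|_{\dot{H}_\gamma^1}^2$, forcing $\|\phi^j\|_{\dot{H}_\gamma^1}\to 0$; since each extraction consumes a definite portion of the refined-Strichartz quantity, applying the refined inequality to $R_n^J$ delivers the smallness \eqref{115}.

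Orthogonality \eqref{110} follows by contradiction: if $\lambda_n^j/\lambda_n^k$, $\lambda_n^k/\lambda_n^j$, and $|t_n^j(\lambda_n^j)^2-t_n^k(\lambda_n^k)^2|/(\lambda_n^j\lambda_n^k)$ stayed jointly bounded, then up to a subsequence $(t_n^j,\lambda_n^j)$ and $(t_n^k,\lambda_n^k)$ would be equivalent under the symmetry group. Chasing this through the construction (that $\phi^k$ is the weak limit after $\phi^j$ has already been subtracted) forces $\phi^k\equiv 0$, contradicting the extraction criterion. The $L^{2^\ast}$ decomposition \eqref{135} is then proved via a Brézis--Lieb argument: \eqref{110} kills the cross terms $\int|\phi_n^j|^{2^\ast-1}|\phi_n^k|\,dx$ in the limit for $j\ne k$, while $(e^{-it_n^j\Delta_\gamma}R_n^J)_{[1/\lambda_n^j]}\rightharpoonup 0$ in $\dot{H}_\gamma^1$ separates the profiles from the remainder; \eqref{136} follows immediately. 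The main obstacle is the refined Strichartz inequality itself: Littlewood--Paley theory must be rebuilt from the heat semigroup of $-\Delta_\gamma$, and the transfer between that calculus and the standard one is constrained by the admissibility range in Lemma \ref{Equivalence of Sobolev norm} through the parameter $\sigma$.
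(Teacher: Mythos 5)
The paper does not prove this lemma; it is cited verbatim from \cite{KilMiaVisZhaZhe17}, so there is no ``paper's own proof'' to compare against. Your sketch reproduces the standard Bahouri--G\'erard/Keraani machinery in the form it takes in that reference: a refined (equivalently, inverse) Strichartz estimate built on a Littlewood--Paley calculus adapted to $-\Delta_\gamma$, iterated bubble extraction by renormalized weak limits, Pythagorean expansion in $\dot H^1_\gamma$ from weak convergence, Br\'ezis--Lieb decoupling for $L^{2^\ast}$, and asymptotic orthogonality of the scale/time parameters. That is the right outline and the right reference.

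Two points are glossed over more than the cited proof allows. First, the most delicate part of the argument in \cite{KilMiaVisZhaZhe17} is precisely that $-\Delta_\gamma$ is \emph{not} translation invariant, so the profile propagator depends on where the bubble concentrates relative to the origin; for nonradial data one must distinguish profiles using $e^{it\Delta_\gamma}$ (concentration at the origin) from profiles using $e^{it\Delta}$ (concentration escaping to spatial infinity), and a convergence lemma of the form $\|(e^{it\Delta_\gamma}-e^{it\Delta})\psi_n\|\to 0$ along translating frames is needed. In the radial case this dichotomy collapses, which is why only $e^{it\Delta_\gamma}$ appears in the statement; but it deserves an explicit remark since it is the main structural simplification that radiality buys, not merely ``no $x_n^j$ appear.'' Second, your refined Strichartz estimate with $\sup_N N^{-(d-2)/2}\|e^{it\Delta_\gamma}P^\gamma_N f\|_{L^\infty_{t,x}}$ on the right-hand side is not the exact form used in the reference (they run an inverse-Strichartz/bubble-extraction proposition with a specific admissible $L^q_tL^r_x$ norm and a careful transfer through the Mikhlin-type functional calculus subject to the range restrictions of Lemma \ref{Equivalence of Sobolev norm}); your form is morally equivalent but would need an actual proof, and the $L^\infty$ endpoint is not free. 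These are presentation-level gaps, not errors in strategy: as a description of where the lemma comes from, the sketch is accurate.
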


\section{Well-posedness}\label{Sec: Well-posedness}

In this subsection, we recall global existence and scattering results for \eqref{NLS} with a small data.


\begin{theorem}[Small data global existence, \cite{KilVis13}]\label{Small data global existence}
Let $d \geq 3$, $\gamma > 0$, and $u_0 \in \dot{H}^1(\R^d)$.
Assume that $\|u_0\|_{H_\gamma^1} \leq A$ for given $A > 0$.
Then, there exists $\eta > 0$ such that if
\begin{align*}
	\|e^{i(t-t_0)\Delta_\gamma}u_0\|_{\dot{X}_\gamma^1(I)}
		\leq \eta
\end{align*}
for some time interval $I \ni t_0$, then the solution $u$ to \eqref{NLS} with $u(t_0) = u_0$ exists on $I \times \R^d$ and satisfies
\begin{align*}
	\|u\|_{\dot{X}_\gamma^1(I)}
		\lesssim \eta, \qquad
	\|u\|_{L_t^\infty(I;\dot{H}_\gamma^1)}
		\lesssim A + \eta^\frac{d+2}{d-2}.
\end{align*}
\end{theorem}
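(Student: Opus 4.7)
The plan is a standard Banach fixed point argument applied to the Duhamel operator
\begin{align*}
\Phi(u)(t) := e^{i(t-t_0)\Delta_\gamma}u_0 + i\int_{t_0}^t e^{i(t-s)\Delta_\gamma}\bigl(|u|^{\frac{4}{d-2}}u\bigr)(s)\,ds
\end{align*}
on the complete metric space $B := \{u \in \dot X_\gamma^1(I)\cap L_t^\infty(I;\dot H_\gamma^1) : \|u\|_{\dot X_\gamma^1(I)} \le 2\eta,\ \|u\|_{L_t^\infty(I;\dot H_\gamma^1)} \le 2C_0 A\}$ with the $\dot X_\gamma^1(I)$-distance. The three key ingredients are the Strichartz estimates of Theorem \ref{Strichartz estimates}, the Sobolev embedding $\|\cdot\|_{S(I)}\lesssim \|\cdot\|_{\dot X_\gamma^1(I)}$, and (crucially) the equivalence of $\dot W^{1,p}$ and $\dot W^{1,p}_\gamma$ from Lemma \ref{Equivalence of Sobolev norm}, which for $\gamma>0$ (so $\sigma\le 0$) is available at the two relevant exponents $p = \tfrac{2d(d+2)}{d^2+4}$ and $p = \tfrac{2d}{d+2}$.

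The heart of the argument is the nonlinear estimate
\begin{align*}
\bigl\|\nabla\bigl(|u|^{\frac{4}{d-2}}u\bigr)\bigr\|_{L_t^2(I;L_x^{\frac{2d}{d+2}})} \lesssim \|u\|_{S(I)}^{\frac{4}{d-2}}\,\|\nabla u\|_{L_t^{\frac{2(d+2)}{d-2}}(I;L_x^{\frac{2d(d+2)}{d^2+4}})},
\end{align*}
which I would derive from the fractional chain rule and H\"older's inequality; the exponents balance since $\tfrac{d+2}{2d} = \tfrac{4}{d-2}\cdot\tfrac{d-2}{2(d+2)} + \tfrac{d^2+4}{2d(d+2)}$ and $\tfrac{1}{2} = \tfrac{4}{d-2}\cdot\tfrac{d-2}{2(d+2)} + \tfrac{d-2}{2(d+2)}$. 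Since $(2,\tfrac{2d}{d-2})\in \Lambda_0$, commuting $(-\Delta_\gamma)^{1/2}$ through the propagator, applying the inhomogeneous Strichartz estimate, and then passing between $\nabla$ and $(-\Delta_\gamma)^{1/2}$ via Lemma \ref{Equivalence of Sobolev norm} yield
\begin{align*}
\|\Phi(u)\|_{\dot X_\gamma^1(I)} \le \|e^{i(t-t_0)\Delta_\gamma}u_0\|_{\dot X_\gamma^1(I)} + C\|u\|_{\dot X_\gamma^1(I)}^{\frac{d+2}{d-2}},\quad \|\Phi(u)\|_{L_t^\infty(I;\dot H_\gamma^1)} \le C_0\|u_0\|_{\dot H_\gamma^1} + C\|u\|_{\dot X_\gamma^1(I)}^{\frac{d+2}{d-2}}.
\end{align*}
The analogous difference estimate, built from the pointwise bound $\bigl||u|^{\frac{4}{d-2}}u-|v|^{\frac{4}{d-2}}v\bigr|\lesssim (|u|^{\frac{4}{d-2}}+|v|^{\frac{4}{d-2}})|u-v|$ together with a parallel derivative computation, gives $\|\Phi(u)-\Phi(v)\|_{\dot X_\gamma^1(I)}\le C\eta^{\frac{4}{d-2}}\|u-v\|_{\dot X_\gamma^1(I)}$.

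Choosing $\eta$ so small that $C(2\eta)^{\frac{4}{d-2}}\le \tfrac12$ and $C(2\eta)^{\frac{d+2}{d-2}}\le \eta$ makes $\Phi$ both a self-map of $B$ and a strict contraction, so Banach's fixed point theorem produces a unique $u\in B$ with $\Phi(u)=u$; reading the two displays at the fixed point delivers precisely $\|u\|_{\dot X_\gamma^1(I)}\lesssim \eta$ and $\|u\|_{L_t^\infty(I;\dot H_\gamma^1)} \lesssim A + \eta^{\frac{d+2}{d-2}}$. The main technical obstacle is the nonlinear bound above: everything else is standard, but the norm-equivalence Lemma \ref{Equivalence of Sobolev norm} is what allows one to move between the natural $\nabla$-estimates from the fractional chain rule and the $(-\Delta_\gamma)^{1/2}$-norm in which the Strichartz smoothing for $e^{it\Delta_\gamma}$ is phrased. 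Verifying that the $1/p$-admissibility conditions of that lemma are satisfied at both $L^{\frac{2d(d+2)}{d^2+4}}$ and $L^{\frac{2d}{d+2}}$ (which is automatic for $\gamma>0$) is the only place where the positivity hypothesis on $\gamma$ enters in an essential way.
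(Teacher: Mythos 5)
Your argument is correct and is exactly the standard contraction-mapping proof that the cited reference \cite{KilVis13} employs (the paper itself gives no proof of this statement, only the citation); the only inverse-square-potential-specific ingredient is the norm equivalence of Lemma \ref{Equivalence of Sobolev norm}, which you correctly identify and verify at the two relevant Lebesgue exponents $\tfrac{2d(d+2)}{d^2+4}$ and $\tfrac{2d}{d+2}$ for $\gamma>0$. One small simplification: since only a single derivative is applied, the pointwise chain-rule bound $|\nabla(|u|^{\frac{4}{d-2}}u)|\lesssim |u|^{\frac{4}{d-2}}|\nabla u|$ already suffices, so the fractional chain rule is not actually needed here.
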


\begin{theorem}[Stability, \cite{KilMiaVisZhaZhe17}]\label{Stability}
Let $d \geq 3$ and $\gamma > 0$.
Assume that $I$ is a compact time interval and $\~{u}$ satisfies $\|\~{u}\|_{L_t^\infty(I;\dot{H}_\gamma^1)} \leq E$, $\|\~{u}\|_{S(I)} \leq L$, and
\begin{align*}
	i\partial_t \~{u} + \Delta_\gamma \~{u}
		= - |\~{u}|^\frac{4}{d-2}\~{u} + e\ \text{ on }\ I \times \R^d
\end{align*}
for some positive constants $E$, $L$ and function $e$.
Then, there exists $\varepsilon_0 = \varepsilon_0(E,L) > 0$ such that if $t_0 \in I$ and $u_0 \in \dot{H}^1(\R^d)$ satisfies
\begin{align*}
	\|u_0 - \~{u}(t_0)\|_{\dot{H}_\gamma^1} + \|(-\Delta_\gamma)^\frac{1}{2}e\|_{N^0(I)}
		\leq \varepsilon
\end{align*}
for some $0 < \varepsilon < \varepsilon_1$, then the solution $u$ to \eqref{NLS} on $I \times \R^d$ with $u(t_0) = u_0$ exists and satisfies
\begin{align*}
	\|u - \~{u}\|_{\dot{X}_\gamma^1(I)}
		\leq C(E,L)\varepsilon
	\ \text{ and }\ 
	\|u\|_{\dot{X}_\gamma^1(I)}
		\leq C(E,L).
\end{align*}
\end{theorem}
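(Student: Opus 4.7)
The plan is to treat this as a standard Strichartz perturbation argument, adapted to the inverse-square-potential setting by means of the equivalence of Sobolev norms in Lemma \ref{Equivalence of Sobolev norm}. I would fix a small threshold $\delta>0$ to be determined and partition $I$ into finitely many consecutive subintervals $I_1,\dots,I_N$ on which $\|\~u\|_{S(I_j)}\le\delta$; the number $N=N(L,\delta)$ depends only on $L$. I then argue inductively on $j$, showing that on each $I_j$ the difference $w:=u-\~u$ stays small in $\dot X^1_\gamma(I_j)$, and that $u$ itself stays bounded in $L^\infty_t\dot H^1_\gamma$.

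On a single subinterval $I_j$ with left endpoint $s_j$, the difference satisfies
\begin{align*}
	i\partial_t w+\Delta_\gamma w
		=-\bigl(|u|^{\frac{4}{d-2}}u-|\~u|^{\frac{4}{d-2}}\~u\bigr)-e,
\end{align*}
so Duhamel's formula combined with Theorem \ref{Strichartz estimates} gives
\begin{align*}
	\|w\|_{\dot X^1_\gamma(I_j)}
		&\lesssim \|w(s_j)\|_{\dot H^1_\gamma}
		+\bigl\|(-\Delta_\gamma)^{1/2}\bigl(|u|^{\frac{4}{d-2}}u-|\~u|^{\frac{4}{d-2}}\~u\bigr)\bigr\|_{N^0(I_j)}
		+\|(-\Delta_\gamma)^{1/2}e\|_{N^0(I_j)}.
\end{align*}
The central technical step is the nonlinear difference estimate
\begin{align*}
	\bigl\|(-\Delta_\gamma)^{1/2}\bigl(F(u)-F(\~u)\bigr)\bigr\|_{N^0(I_j)}
		\lesssim \bigl(\|u\|_{S(I_j)}^{\frac{4}{d-2}}+\|\~u\|_{S(I_j)}^{\frac{4}{d-2}}\bigr)\|w\|_{\dot X^1_\gamma(I_j)},
\end{align*}
which I would derive by Hölder and the fractional chain rule applied to $F(z)=|z|^{4/(d-2)}z$ in the potential-free setting, then transferred to $(-\Delta_\gamma)^{1/2}$ via Lemma \ref{Equivalence of Sobolev norm} so that the admissible exponents lie in the allowed window. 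Choosing $\delta$ small, absorbing the $\|w\|_{\dot X^1_\gamma}$ term on the right, and using $\|u\|_S\lesssim\|w\|_{\dot X^1_\gamma}+\|\~u\|_S$ closes a bootstrap, yielding $\|w\|_{\dot X^1_\gamma(I_j)}\lesssim \|w(s_j)\|_{\dot H^1_\gamma}+\|(-\Delta_\gamma)^{1/2}e\|_{N^0(I_j)}$.

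To iterate, I need to control the initial difference at the next endpoint, $\|w(s_{j+1})\|_{\dot H^1_\gamma}$, which another Strichartz application bounds by the same right-hand side up to a constant. Iterating $N$ times produces a geometric factor $C(E,L)^N$, so setting
\begin{align*}
	\varepsilon_1
		:=\varepsilon_1(E,L,N)
		\ll \bigl[C(E,L)\bigr]^{-N}
\end{align*}
forces all the subinterval contributions to stay within the small-data regime of Theorem \ref{Small data global existence}, giving the claimed global bounds. The main obstacle I anticipate is the combination of the low regularity of the nonlinearity when $d$ is large (so that the power $\tfrac{4}{d-2}<1$ and the chain rule fractional derivative needs to be handled by the exotic-Strichartz / Hölder exponent choices in the definition of $\dot X^1_\gamma$ and $N^0$) together with the need to keep all exponents inside the windows required by Lemma \ref{Equivalence of Sobolev norm}; verifying that the chosen pair indeed satisfies the constraints on $1/p$ in terms of $\sigma$ and $\beta$ is where the bookkeeping becomes delicate.
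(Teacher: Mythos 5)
Your outline is the standard subdivision-and-bootstrap Strichartz perturbation argument and matches the general strategy of the cited reference; the paper only imports this theorem from \cite{KilMiaVisZhaZhe17}, so there is no in-paper proof to compare against. That said, there is a genuine gap in the proposal as written. The central nonlinear difference estimate you invoke,
\begin{align*}
	\bigl\|(-\Delta_\gamma)^{1/2}\bigl(F(u)-F(\~{u})\bigr)\bigr\|_{N^0(I_j)}
		\lesssim\bigl(\|u\|_{S(I_j)}^{\frac{4}{d-2}}+\|\~{u}\|_{S(I_j)}^{\frac{4}{d-2}}\bigr)\|w\|_{\dot X^1_\gamma(I_j)},
\end{align*}
is false when $d\geq 7$. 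In that range the power $\frac{4}{d-2}<1$, so the derivatives $F_z,F_{\bar z}$ of $F(z)=|z|^{\frac{4}{d-2}}z$ are only $\frac{4}{d-2}$-H\"older continuous, not Lipschitz; applying $\nabla$ to $F(u)-F(\~{u})$ produces terms of the shape $(F_z(u)-F_z(\~{u}))\nabla\~{u}$, which are bounded only by $|w|^{\frac{4}{d-2}}|\nabla\~{u}|$, sublinear in $w$. No choice of H\"older exponents repairs this, and the bootstrap you describe does not close.

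You flag an ``obstacle'' here but conflate it with the exponent bookkeeping for Lemma \ref{Equivalence of Sobolev norm}; that part is in fact benign, since for $\gamma>0$ one has $\sigma<0$, which only \emph{widens} the admissibility windows, so the exponents appearing in $\dot X^1_\gamma$ and $N^0$ are comfortably inside them. The real fix, due to Tao--Visan and carried into the inverse-square setting by \cite{KilMiaVisZhaZhe17}, is to run the difference estimate at a strictly lower (fractional) level of regularity where the chain rule becomes linear in $w$, and then upgrade to the $\dot H^1$-level conclusion through exotic Strichartz estimates. Your proposal is complete and correct for $3\leq d\leq 6$ (where $\frac{4}{d-2}\geq 1$ and $F'$ is Lipschitz), which is in fact the only range the paper uses the stability theorem in, but as a proof of the statement for all $d\geq 3$ it would break at the difference estimate above.
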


\begin{theorem}[Local theory, \cite{Caz03, KenMer06}]\label{Local theory}
Let $d \geq 3$ and $\gamma > 0$.
Given $u_0 \in \dot{H}^1(\R^d)$ and $t_0 \in \R$, there exists unique maximal life-span solution $u$ to \eqref{NLS} on $I \times \R^d$ with $u(t_0) = u_0$.
In addition, the following hold :
\begin{itemize}
\item
(Local existence)
$I$ is an open interval including $t_0$.
\item
(Scattering)
$\|u\|_{S(I)} < \infty$ holds if and only if $\|u\|_{\dot{X}_\gamma^1(I)} < \infty$ holds.
Moreover, if $u$ satisfies $\|u\|_{S([0,\infty))} < \infty$, then $u$ scatters in positive time.
A similar result holds for negative time.
\item
(Small data theory)
There exists $\eta_0 > 0$ such that if $\|u_0\|_{\dot{H}_\gamma^1} \leq \eta_0$, then $u$ exists globally in time and satisfies $\|u\|_{\dot{X}_\gamma^1(\R)} \lesssim \|u_0\|_{\dot{H}_\gamma^1}$.
\item
(Large data theory)
For given $u_0 \in \dot{H}^1(\R^d)$, there exists an open interval $I_0 \ni 0$ such that $\|e^{it\Delta_\gamma}u_0\|_{\dot{X}_\gamma^1(I_0)} \leq \eta$, where $\eta$ is given Theorem \ref{Small data global existence}.
\item
(Blow-up criterion)
If $T_{\max} < \infty$, then $\|u\|_{S([0,T_{\max}))} = \infty$ holds.
A similar result holds for negative time.
\item
(Uniqueness)
If $\~{u} \in C_t(I_1;\dot{H}^1(\R^d))$ solves \eqref{NLS} with $\~{u}(t_0) = u$, then $I_1 \subset I$ and $\~{u}(t) = u(t)$ for any $t \in I_1$.
\item
(Continuity dependence on initial data)
If $\~{u}$ solves \eqref{NLS} on $I$ with initial data $\~{u}_0$ and satisfies $\|\~{u}\|_{L_t^\infty(I;\dot{H}_\gamma^1)} + \|\~{u}\|_{S(I)} \leq A$ for some $A > 0$, then there exists $\varepsilon_0 = \varepsilon_0(A)$, $C = C(A) > 0$ such that for any $\|\~{u}_0 - u_0\|_{\dot{H}_\gamma^1} = \varepsilon < \varepsilon_0$, the solution $u$ to \eqref{NLS} with initial data $u_0$ is defined on $I$ and satisfies $\|u\|_{S(I)} \leq C$ and $\|\~{u} - u\|_{L_t^\infty(I;\dot{H}_\gamma^1)} \leq C\varepsilon$.
\end{itemize}
\end{theorem}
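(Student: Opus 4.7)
The plan is to deduce Theorem \ref{Local theory} as a bundle of consequences of the two workhorse results already at hand, namely the small-data global existence (Theorem \ref{Small data global existence}) and the stability theorem (Theorem \ref{Stability}), rather than reproving a Kato contraction from scratch. I would treat the bullet points in the following order: small/large data theory, existence of the maximal interval and the blow-up criterion, the scattering equivalence and the scattering conclusion, and finally uniqueness and continuity dependence.

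For small data theory, the Strichartz estimate (Theorem \ref{Strichartz estimates}) applied to the pair defining $\dot X^1_\gamma$ yields
\begin{align*}
\|e^{it\Delta_\gamma}u_0\|_{\dot{X}^1_\gamma(\R)} \lesssim \|u_0\|_{\dot{H}^1_\gamma},
\end{align*}
so for $\eta_0$ chosen below the threshold $\eta$ of Theorem \ref{Small data global existence} and with $A \sim \eta_0$, that theorem gives the global bound $\|u\|_{\dot X^1_\gamma(\R)}\lesssim \|u_0\|_{\dot H^1_\gamma}$. For large data, the same Strichartz bound guarantees $\|e^{it\Delta_\gamma}u_0\|_{\dot X^1_\gamma(\R)}<\infty$, hence dominated convergence produces an open interval $I_0\ni 0$ on which this norm is below $\eta$; Theorem \ref{Small data global existence} then provides the solution on $I_0$. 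The maximal interval of existence is obtained by taking the union of all open intervals $I\ni 0$ on which such a solution exists; uniqueness (established last) ensures these agree on overlaps.

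The scattering-norm equivalence is the most substantive step. One direction is Sobolev: $\|u\|_{S(I)}\lesssim \|u\|_{\dot X^1_\gamma(I)}$. Conversely, if $\|u\|_{S(I)}<\infty$, I partition $I=\bigcup_{k=1}^K I_k$ into finitely many subintervals on which $\|u\|_{S(I_k)}$ is smaller than a threshold $\delta$ determined below. On each $I_k$ Duhamel's formula, Strichartz, and Lemma \ref{Equivalence of Sobolev norm} (to commute the fractional derivative across the singular potential) yield
\begin{align*}
\|u\|_{\dot X^1_\gamma(I_k)} \lesssim \|u(t_k)\|_{\dot H^1_\gamma} + \|u\|_{S(I_k)}^{4/(d-2)}\|u\|_{\dot X^1_\gamma(I_k)},
\end{align*}
so choosing $\delta$ small absorbs the last term and gives a bound on each piece; iterating across $k$ yields $\|u\|_{\dot X^1_\gamma(I)}<\infty$. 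From this, the blow-up criterion follows: if $\|u\|_{S([0,T_{\max}))}<\infty$ with $T_{\max}<\infty$, one obtains $\|u\|_{\dot X^1_\gamma([0,T_{\max}))}<\infty$, so by the Duhamel integral and Strichartz, $u(t)$ has a limit in $\dot H^1_\gamma$ as $t\uparrow T_{\max}$, and large data theory extends the solution past $T_{\max}$, a contradiction. The scattering statement then follows by defining
\begin{align*}
\psi_+ := u_0 + i\int_0^\infty e^{-is\Delta_\gamma}|u|^{\frac{4}{d-2}}u(s)\,ds
\end{align*}
and using Strichartz to show the Duhamel tail is Cauchy in $H^1$.

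Finally, uniqueness and continuous dependence are immediate applications of the stability theorem (Theorem \ref{Stability}) with $e\equiv 0$: two solutions sharing the same $\dot H^1_\gamma$ data must coincide on their common interval of definition (taking $\varepsilon\to 0$), and the quantitative closeness of solutions to close data follows from the $C(E,L)\varepsilon$ bound there. The main obstacle in this plan is the scattering equivalence: it forces a careful division of $I$ exploiting absolute continuity of the $S$-norm, together with a correct use of Lemma \ref{Equivalence of Sobolev norm} to transfer the derivative estimates between $\dot W^{1,p}_\gamma$ and $\dot W^{1,p}$ within the admissible range needed for the nonlinear Strichartz bound.
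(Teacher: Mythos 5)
The paper does not prove Theorem \ref{Local theory}; it is quoted as a known result from Cazenave \cite{Caz03} and Kenig--Merle \cite{KenMer06} without any in-paper proof, so there is no argument in the text to compare yours against. That said, your outline is a faithful account of how the standard local theory is assembled in those references: the small-data global existence and stability theorems are the engines, Strichartz together with absolute continuity of the $S$-norm drives the $S(I)$--$\dot X^1_\gamma(I)$ equivalence via a partition-and-bootstrap, Lemma \ref{Equivalence of Sobolev norm} is exactly what is needed to transfer derivatives across the inverse-square potential inside the nonlinear Strichartz bound, the blow-up criterion and wave operator follow from the Duhamel tail, and uniqueness/continuous dependence are read off from Theorem \ref{Stability} with $e\equiv 0$. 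The plan is sound. Two small technical points, neither a real gap: the bootstrap on each subinterval $I_k$ must be seeded by a finite $\dot H^1_\gamma$ value at the left endpoint (the solution is a priori $C_t\dot H^1$ on compact subintervals, so this is available, and the bound then propagates inductively across the $I_k$); and Theorem \ref{Stability} is stated for compact time intervals, so uniqueness and continuous dependence on an unbounded maximal interval should be obtained by exhausting $I$ by compact subintervals and passing to the limit.
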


\begin{theorem}[Conservation laws]
Let $d \geq 3$, $\gamma > 0$, and $u_0 \in \dot{H}^1(\R^d)$.
Then, the solution $u$ to \eqref{NLS} with \eqref{IC} conserves its energy \eqref{123}.
Moreover, if $u_0 \in H^1(\R^d)$, then $u$ also conserves its mass \eqref{122}.
\end{theorem}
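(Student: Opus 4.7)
The strategy is a standard approximation argument: first verify the conservation laws for sufficiently regular solutions by direct computation, then transfer them to general $\dot{H}^1$ solutions via the continuous dependence already provided by Theorem \ref{Local theory} and Theorem \ref{Stability}.

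First I would pick a sequence $\{u_{0,n}\} \subset H^2_\gamma(\R^d) \cap H^1(\R^d)$ with $u_{0,n} \to u_0$ in $\dot{H}^1_\gamma$ (and additionally in $L^2$ when $u_0 \in H^1$); one concrete choice is $u_{0,n} = \chi_n(-\Delta_\gamma)^{-1}(-\Delta_\gamma) u_0$ with a suitable spectral cut-off $\chi_n$, so that $u_{0,n}$ lies in the domain of $-\Delta_\gamma$ and the approximation is valid in the Friedrichs setting. For each such $u_{0,n}$ the local existence in Theorem \ref{Local theory} produces a solution $u_n$, and one can moreover show $u_n \in C_t(I;H^2_\gamma)$ on a short interval $I$ by a standard contraction argument applied to $(-\Delta_\gamma) u_n$ using the dispersive and Strichartz estimates (Theorem \ref{Dispersive estimate}, Theorem \ref{Strichartz estimates}). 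For such classical solutions the identities
\[
\frac{d}{dt}\|u_n(t)\|_{L^2}^2 = 0,\qquad \frac{d}{dt}E_\gamma[u_n(t)] = 0
\]
follow from the formal computations: pair the equation with $\bar{u}_n$ and take imaginary part to get mass conservation, and pair with $\partial_t \bar u_n$ and take the real part to get energy conservation, all of which are legitimate pointwise-in-$t$ identities because $-\Delta_\gamma u_n$ lies in $L^2$.

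Next I would transfer the conservation laws to $u$. Since $u_{0,n}\to u_0$ in $\dot{H}^1_\gamma$ and $u \in C_t(I_{\max};\dot{H}^1_\gamma)$ by Theorem \ref{Local theory}, the continuity-dependence statement in Theorem \ref{Local theory} (or equivalently the stability result, Theorem \ref{Stability}, applied with $e\equiv 0$) gives, for any compact $J \Subset I_{\max}$,
\[
\|u_n - u\|_{L_t^\infty(J;\dot{H}^1_\gamma)} \longrightarrow 0.
\]
By the Sharp Sobolev embedding (Proposition \ref{Sharp Sobolev embedding}) this implies convergence also in $L_t^\infty(J;L^{2^\ast})$, hence $E_\gamma[u_n(t)] \to E_\gamma[u(t)]$ uniformly on $J$. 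Passing $n\to\infty$ in $E_\gamma[u_n(t)] = E_\gamma[u_{0,n}]$ and using $E_\gamma[u_{0,n}]\to E_\gamma[u_0]$ yields the energy identity on $J$, and since $J \Subset I_{\max}$ is arbitrary, on all of $I_{\max}$. For mass, the same stability argument also gives $L^2$-convergence when $u_0 \in H^1(\R^d)$: the equation $i\partial_t(u_n-u) + \Delta_\gamma(u_n-u) = -(|u_n|^{4/(d-2)}u_n - |u|^{4/(d-2)}u)$ together with the $L^2$ Strichartz estimate (the $(q_2,r_2)\in \Lambda_0$ case of Theorem \ref{Strichartz estimates}) and smallness of the nonlinear difference on short subintervals, obtained from $\|u_n-u\|_{\dot{X}^1_\gamma(J)}\to 0$, bootstraps $\|u_n(t)-u(t)\|_{L^2}\to 0$ uniformly on $J$. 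Passing to the limit in $\|u_n(t)\|_{L^2}^2 = \|u_{0,n}\|_{L^2}^2$ gives mass conservation.

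The main obstacle is the passage to the limit in the nonlinear term when justifying that the approximation in $L^2$ for the mass is uniform on $J$, because the nonlinear power $4/(d-2)$ is $\dot{H}^1$-critical and carries no smallness at the $L^2$ level by itself; the key point is that the required smallness comes not from $L^2$ but from the finite $\dot{X}^1_\gamma(J)$ bound on $u$ and from the convergence $\|u_n-u\|_{\dot{X}^1_\gamma(J)}\to 0$, which allows the standard Strichartz-based difference estimate to close. Once this is carried out on a sub-division of $J$ into intervals on which $\|u\|_{\dot{X}^1_\gamma}$ is small, the argument concludes.
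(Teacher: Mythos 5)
The paper states this theorem without proof, so there is no in-paper argument to compare against; your density-by-regularization strategy is a standard route and is essentially correct in outline. Three remarks are in order. First, the expression $u_{0,n}=\chi_n(-\Delta_\gamma)^{-1}(-\Delta_\gamma)u_0$ is garbled; you presumably mean $u_{0,n}=\chi_n(-\Delta_\gamma)u_0$ for a spectral cut-off $\chi_n$ supported in $[0,n]$, which lies in the operator domain of $-\Delta_\gamma$ and converges to $u_0$ in $\dot H^1_\gamma$ (and in $L^2$ when $u_0\in H^1$). Second, the claim that the approximate solutions lie in $C_t(I;H^2_\gamma)$ for the entire lifespan of the $\dot H^1$-solution is the genuinely nontrivial step and should not be presented as a ``standard contraction argument.'' At the energy-critical exponent one needs a persistence-of-regularity bootstrap: given finiteness of the $\dot X^1_\gamma$-norm of $u$ on a compact interval $J$, one feeds the Duhamel formula for $(-\Delta_\gamma)u_n$ through Strichartz on a partition of $J$ into subintervals on which the $S$-norm is small (cf. \cite{KilVis13}). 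Moreover, differentiating the nonlinearity compatibly with $-\Delta_\gamma$ requires working in $\dot W^{s,p}_\gamma$-type spaces, and the equivalence Lemma \ref{Equivalence of Sobolev norm} carries exponent constraints (depending on $\sigma<0$) that you have not checked at the $s=2$, $p=2$ level, particularly when $d=3$ and $\gamma$ is small. Third, the $L^2$-bootstrap you sketch for mass conservation does close: with $(q,r)=(\tfrac{2(d+2)}{d},\tfrac{2(d+2)}{d})\in\Lambda_0$ and the pointwise bound $|F(u_n)-F(u)|\lesssim(|u_n|+|u|)^{\frac{4}{d-2}}|u_n-u|$, H\"older places the difference in $L_{t,x}^{\frac{2(d+2)}{d}}$ against $(|u_n|+|u|)^{\frac{4}{d-2}}$ in $L_{t,x}^{\frac{d+2}{2}}$, which is exactly $\|\cdot\|_{S}^{\frac{4}{d-2}}$ and is small on short subintervals. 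An alternative that avoids higher regularity altogether is Ozawa's method: write the increments of $\|u(t)\|_{L^2}^2$ and $E_\gamma[u(t)]$ directly from the Duhamel formula and control them by Strichartz; this sidesteps the persistence-of-regularity step and is often preferred precisely at critical regularity. If you keep the data-regularization approach, the $H^2_\gamma$-persistence step must be made explicit rather than asserted.
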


\begin{proposition}[Final state problem]\label{Existence of a wave operator}
For any $u_+ \in \dot{H}_\text{rad}^1(\R^d)$, there exists $T > 0$ and a solution $u$ to \eqref{NLS} defined on $C_t([T,\infty);\dot{H}^1(\R^d))$ such that
\begin{align*}
	\lim_{t \rightarrow \infty}\|u(t) - e^{it\Delta_\gamma}u_+\|_{\dot{H}^1}
		= 0.
\end{align*}
\end{proposition}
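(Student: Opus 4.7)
The plan is the standard wave-operator construction via the final-state Duhamel formula, adapted to the inverse-square potential using the tools collected in Section~\ref{Sec: Preliminary}. We look for $u$ as a fixed point of
\begin{align*}
    \Phi(v)(t) := e^{it\Delta_\gamma}u_+ - i\int_t^\infty e^{i(t-s)\Delta_\gamma}|v(s)|^{\frac{4}{d-2}}v(s)\,ds
\end{align*}
on a ball in $\dot{X}_\gamma^1([T,\infty))$ for $T$ sufficiently large. First I would observe that since $\gamma>0$, the Hardy inequality (Lemma~\ref{Hardy inequality}) gives $\|u_+\|_{\dot{H}_\gamma^1}\sim\|u_+\|_{\dot{H}^1}<\infty$, so by the Strichartz estimate (Theorem~\ref{Strichartz estimates}) applied with $s=0$ to $(-\Delta_\gamma)^{1/2}u_+$, one gets $\|e^{it\Delta_\gamma}u_+\|_{\dot{X}_\gamma^1(\R)}\lesssim\|u_+\|_{\dot{H}_\gamma^1}<\infty$. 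Absolute continuity of the resulting time integral then allows us to select, for any prescribed $\eta>0$, a threshold $T=T(\eta,u_+)>0$ with $\|e^{it\Delta_\gamma}u_+\|_{\dot{X}_\gamma^1([T,\infty))}\le\eta$.

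Next, I would run the contraction argument on
\begin{align*}
    \mathcal{B}:=\bigl\{v\in\dot{X}_\gamma^1([T,\infty))\cap C_t([T,\infty);\dot{H}_\gamma^1):\ \|v\|_{\dot{X}_\gamma^1([T,\infty))}\le 2\eta,\ \|v\|_{L_t^\infty\dot{H}_\gamma^1}\le C\|u_+\|_{\dot{H}_\gamma^1}\bigr\}
\end{align*}
equipped with the metric $\|v-w\|_{\dot{X}_\gamma^1([T,\infty))}$. The dual Strichartz estimate, the fractional chain rule, and the equivalence of Sobolev norms (Lemma~\ref{Equivalence of Sobolev norm}) used to trade $(-\Delta_\gamma)^{1/2}$ for $\nabla$ on the nonlinearity give the familiar bound
\begin{align*}
    \|\Phi(v)-\Phi(w)\|_{\dot{X}_\gamma^1([T,\infty))}\lesssim \bigl(\|v\|_{\dot{X}_\gamma^1([T,\infty))}^{\frac{4}{d-2}}+\|w\|_{\dot{X}_\gamma^1([T,\infty))}^{\frac{4}{d-2}}\bigr)\|v-w\|_{\dot{X}_\gamma^1([T,\infty))},
\end{align*}
together with an analogous self-mapping bound for $\Phi(v)$ itself. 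Choosing $\eta$ small enough (independently of $u_+$) makes $\Phi$ a contraction on $\mathcal{B}$, so we obtain a unique fixed point $u\in\mathcal{B}$, which by construction solves \eqref{NLS} on $[T,\infty)$ in the Duhamel sense and lies in $C_t([T,\infty);\dot{H}^1(\R^d))$.

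For the asymptotic, the Duhamel identity
\begin{align*}
    u(t)-e^{it\Delta_\gamma}u_+ = -i\int_t^\infty e^{i(t-s)\Delta_\gamma}|u|^{\frac{4}{d-2}}u(s)\,ds
\end{align*}
and the same Strichartz/nonlinear estimate applied on $[t,\infty)$ yield
\begin{align*}
    \|u(t)-e^{it\Delta_\gamma}u_+\|_{\dot{H}_\gamma^1}\lesssim \|u\|_{\dot{X}_\gamma^1([t,\infty))}^{\frac{d+2}{d-2}}\xrightarrow[t\to\infty]{}0,
\end{align*}
and translating back through $\|\cdot\|_{\dot{H}^1}\sim\|\cdot\|_{\dot{H}_\gamma^1}$ (valid for $\gamma>0$ by Hardy) gives the claimed $\dot{H}^1$ convergence.

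The only delicate point is the nonlinear Strichartz estimate: the derivative landing on the nonlinearity is $(-\Delta_\gamma)^{1/2}$ rather than $\nabla$, so one cannot apply the usual Kato--Ponce or chain rule directly. The main obstacle is therefore verifying that the exponents in Lemma~\ref{Equivalence of Sobolev norm} (with $s=1$, $p=\tfrac{2d(d+2)}{d^2+4}$ and its dual) lie in the admissible range determined by $\sigma$ and $\beta$, so that one may freely switch between $(-\Delta_\gamma)^{1/2}$ and $\nabla$ on the nonlinearity; once this is in place, the argument reduces to the same computation underlying the small-data theory (Theorem~\ref{Small data global existence}).
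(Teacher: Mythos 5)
Your proof is correct, and since the paper states this proposition without proof (it is a standard wave-operator construction), there is nothing in the paper to contrast it with. Your route---final-state Duhamel formula, Strichartz to push $\|e^{it\Delta_\gamma}u_+\|_{\dot{X}_\gamma^1([T,\infty))}$ below a small threshold by choosing $T$ large, then a contraction on a ball in $\dot{X}_\gamma^1([T,\infty))\cap C_t\dot{H}_\gamma^1$---is exactly the expected argument, and you correctly identify the one genuinely nontrivial verification: that with $s=1$ and $\gamma>0$ (so $\sigma<0$), the exponents $p=\tfrac{2d(d+2)}{d^2+4}$ and the dual exponent $\tfrac{2d}{d+2}$ fall into the admissible window of Lemma~\ref{Equivalence of Sobolev norm}, which is what permits swapping $(-\Delta_\gamma)^{1/2}$ for $\nabla$ on the nonlinearity. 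One small remark: when you equip $\mathcal{B}$ with only the $\dot{X}_\gamma^1$ metric but also impose an $L_t^\infty\dot{H}_\gamma^1$ bound in its definition, you should note (or arrange) that $\mathcal{B}$ is closed under this metric---standard, via weak-* compactness of the $L_t^\infty\dot{H}_\gamma^1$ ball---so that the contraction fixed point actually lands in $\mathcal{B}$, which then gives the asserted $C_t\dot{H}^1$ regularity after using $\|\cdot\|_{\dot{H}^1}\sim\|\cdot\|_{\dot{H}_\gamma^1}$ for $\gamma>0$.
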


\section{Modulation Analysis}\label{Sec: Modulation Analysis}

We define a function
\begin{align*}
	(\delta(u(t)) = )
		\delta(t)
		:= \|W_0\|_{\dot{H}^1}^2 - \|u(t)\|_{\dot{H}_\gamma^1}^2
\end{align*}
and a set
\begin{align*}
	I_0
		:= \{t \in (T_{\min},T_{\max}) : |\delta(t)| < \delta_0\}
\end{align*}
for a given small parameter $\delta_0 > 0$.

\begin{lemma}\label{Variational characterization}
Let $d \geq 3$ and $\gamma > 0$.
Assume that $f \in \dot{H}_\text{rad}^1(\R^d)$ satisfy $E_\gamma[f] = E_0[W_0]$.
For any $\varepsilon > 0$, there exists $\delta_0 = \delta_0(\varepsilon) > 0$ small enough such that if $|\delta(f)| < \delta_0$, then there exists $(\theta,\mu) \in \R / 2\pi\Z \times (0,\infty)$ so that
\begin{align*}
	\|f_{[\theta,\mu]} - W_0\|_{\dot{H}^1}
		< \varepsilon.
\end{align*}
\end{lemma}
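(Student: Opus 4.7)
The plan is to argue by contradiction and reduce the problem to a classical concentration-compactness statement for almost-optimizers of the Aubin--Talenti Sobolev inequality. Suppose the claim fails: extract $\varepsilon_0 > 0$ and a sequence $\{f_n\} \subset \dot{H}_{\text{rad}}^1(\R^d)$ satisfying $E_\gamma[f_n] = E_0[W_0]$, $\delta(f_n) \to 0$, yet $\inf_{\theta,\mu}\|(f_n)_{[\theta,\mu]} - W_0\|_{\dot{H}^1} \geq \varepsilon_0$ for all $n$. Since $\delta(f_n) \to 0$ one has $\|f_n\|_{\dot{H}_\gamma^1}^2 \to \|W_0\|_{\dot{H}^1}^2$; combining this with the energy identity $\tfrac{1}{2^\ast}\|f_n\|_{L^{2^\ast}}^{2^\ast} = \tfrac{1}{2}\|f_n\|_{\dot{H}_\gamma^1}^2 - E_\gamma[f_n]$ and the relation $\tfrac{1}{2}\|W_0\|_{\dot{H}^1}^2 - E_0[W_0] = \tfrac{1}{2^\ast}\|W_0\|_{L^{2^\ast}}^{2^\ast}$ yields $\|f_n\|_{L^{2^\ast}} \to \|W_0\|_{L^{2^\ast}}$.

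The next step is to reduce to the classical (no-potential) Sobolev setting by showing $\|f_n\|_{\dot{H}^1} \to \|W_0\|_{\dot{H}^1}$. Since $\gamma > 0$, the bound $\|f_n\|_{\dot{H}^1} \leq \|f_n\|_{\dot{H}_\gamma^1}$ gives $\limsup \|f_n\|_{\dot{H}^1} \leq \|W_0\|_{\dot{H}^1}$. Conversely, the classical sharp Sobolev inequality $\|f_n\|_{L^{2^\ast}} \leq C_S(0)\|f_n\|_{\dot{H}^1}$ together with $\|W_0\|_{L^{2^\ast}} = C_S(0)\|W_0\|_{\dot{H}^1}$ forces $\liminf \|f_n\|_{\dot{H}^1} \geq \|W_0\|_{\dot{H}^1}$. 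Hence $\|f_n\|_{\dot{H}^1} \to \|W_0\|_{\dot{H}^1}$ and automatically $\int_{\R^d}\gamma|x|^{-2}|f_n|^2\,dx \to 0$; in particular $\{f_n\}$ becomes a radial almost-optimizing sequence for the classical Aubin--Talenti Sobolev embedding $\dot{H}^1 \hookrightarrow L^{2^\ast}$.

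Finally, I apply concentration-compactness to this almost-optimizing sequence via Lemma \ref{Linear profile decomposition} (usable here with either $\Delta$ or $\Delta_\gamma$, since the two norms agree in the limit), decomposing $f_n = \sum_{j=1}^J \phi_n^j + R_n^J$. The Pythagorean identities for $\dot{H}^1$ and $L^{2^\ast}$, combined with the sharp Sobolev inequality applied to each profile and to the remainder and with the strict super-additivity $a^p + b^p < (a+b)^p$ for $a,b > 0$ and $p = 2^\ast/2 > 1$, force a single component to carry the full budget $\|W_0\|_{\dot{H}^1}^2$ and to saturate Sobolev. The smallness property \eqref{115} together with scale-invariance of the Strichartz $S$-norm excludes the remainder (since a Sobolev-saturating bubble has fixed nontrivial linear $S(\R)$-norm), while time-shifts $t_n^j \to \pm\infty$ are incompatible with Sobolev saturation (dispersion would force $\|\phi_n^j\|_{L^{2^\ast}} \to 0$). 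The surviving profile $\phi^{j^\ast}$ thus has $t_n^{j^\ast} \equiv 0$ and saturates Sobolev, hence by uniqueness of the radial Aubin--Talenti extremizer equals $e^{i\theta_0}(W_0)_{[\mu_0]}$ for some $\theta_0,\mu_0$; absorbing $\mu_0$ into the scaling sequence $\{\lambda_n^{j^\ast}\}$ produces $\theta_n,\mu_n$ with $\|(f_n)_{[\theta_n,\mu_n]} - W_0\|_{\dot{H}^1} \to 0$, contradicting the standing assumption. The main obstacle is precisely this final concentration-compactness step — extracting the single dominating Sobolev-saturating profile and excluding the remainder scenario; identification of the surviving profile with $W_0$ up to phase and scale then follows immediately from Aubin--Talenti uniqueness.
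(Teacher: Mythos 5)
Your proposal is correct and follows the same overall concentration-compactness strategy as the paper, but the endgame is genuinely different, and it is worth noting why the paper's route is shorter. Both arguments argue by contradiction, apply the radial profile decomposition (Lemma \ref{Linear profile decomposition}) to $\{f_n\}$, and use the Pythagorean expansions together with the $\ell^p$ super-additivity to force a single nontrivial profile $\phi$, whose time parameter must vanish by the dispersive decay of $\|e^{it\Delta_\gamma}\phi\|_{L^{2^\ast}}$. At that point the paper stops: $\phi$ would satisfy $\|\phi\|_{\dot{H}_\gamma^1}=\|W_0\|_{\dot{H}^1}$ and $\|\phi\|_{L^{2^\ast}}=\|W_0\|_{L^{2^\ast}}$, hence would be a nontrivial optimizer of $C_S(\gamma)$, which is impossible for $\gamma>0$ by Proposition \ref{Sharp Sobolev embedding}; the ``standing assumption'' $\inf_{\theta,\mu}\|(f_n)_{[\theta,\mu]}-W_0\|_{\dot{H}^1}\ge\varepsilon_0$ is never actually invoked. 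You instead first show, correctly, that $\|f_n\|_{\dot{H}^1}\to\|W_0\|_{\dot{H}^1}$ and hence $\int\gamma|x|^{-2}|f_n|^2\,dx\to 0$, reduce the saturation to the classical Aubin--Talenti inequality, identify $\phi$ with a modulated $W_0$, and then contradict the standing assumption. This is logically valid (both conclusions are deductions from a false premise), but it is a longer route and it also conceals a much more direct contradiction that your own intermediate step already provides: if $(f_n)_{[\theta_n,\mu_n]}\to W_0$ in $\dot{H}^1$, then by the Hardy inequality and the scale invariance of the Hardy integral one gets $\int\gamma|x|^{-2}|f_n|^2\,dx\to\int\gamma|x|^{-2}|W_0|^2\,dx>0$, which directly contradicts the vanishing of the Hardy term you established, without any appeal to the standing assumption --- exactly the spirit of the paper's ``no optimizer for $C_S(\gamma)$'' conclusion. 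A minor technical imprecision: you refer to ``Pythagorean identities for $\dot{H}^1$'', but Lemma \ref{Linear profile decomposition} only gives the expansion in $\dot{H}_\gamma^1$ and $L^{2^\ast}$; the equalities $\|\phi\|_{\dot{H}^1}=\|\phi\|_{\dot{H}_\gamma^1}=\|W_0\|_{\dot{H}^1}$ do follow, but via $\|\phi\|_{\dot{H}^1}\le\|\phi\|_{\dot{H}_\gamma^1}$ combined with the classical Sobolev lower bound, not from a stated $\dot{H}^1$-Pythagorean expansion.
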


\begin{proof}
We prove the claim by contradiction.
Then, there exists $\varepsilon_0 > 0$ such that for any $n \in \N$, there exists $f_n \in \dot{H}_\text{rad}^1(\R^d)$ satisfying $E_\gamma[f_n] = E_0[W_0]$ and $|\delta(f_n)| < \frac{1}{n}$ so that
\begin{align*}
	\inf_{(\theta,\mu) \in \R / 2\pi\Z \times (0,\infty)}\|(f_n)_{[\theta,\mu]} - W_0\|_{\dot{H}^1}
		\geq \varepsilon_0.
\end{align*}
$E_\gamma[f_n] = E_0[W_0]$ and $|\delta(f_n)| < \frac{1}{n}$ deduce $\|f_n\|_{\dot{H}_\gamma^1} \longrightarrow \|W_0\|_{\dot{H}^1}$ and $\|f_n\|_{L^{2^\ast}} \longrightarrow \|W_0\|_{L^{2^\ast}}$ as $n \rightarrow \infty$.
Thus, applying Lemma \ref{Linear profile decomposition} and Proposition \ref{Sharp Sobolev embedding}, we have
\begin{align*}
	C_S(0)
		& = \lim_{n\rightarrow \infty}\frac{\|f_n\|_{L^{2^\ast}}}{\|f_n\|_{\dot{H}_\gamma^1}} \\
		& = \lim_{J\rightarrow J^\ast}\lim_{n\rightarrow \infty}\frac{(\sum_{j=1}^J\|\phi_n^j\|_{L^{2^\ast}}^{2^\ast} + \|R_n^J\|_{L^{2^\ast}}^{2^\ast})^\frac{1}{2^\ast}}{(\sum_{j=1}^J\|\phi^j\|_{\dot{H}_\gamma^1}^2 + \|R_n^J\|_{\dot{H}_\gamma^1}^2)^\frac{1}{2}} \\
		& \leq C_S(0)\lim_{J\rightarrow J^\ast}\lim_{n\rightarrow \infty}\frac{(\sum_{j=1}^J\|\phi^j\|_{\dot{H}_\gamma^1}^{2^\ast} + \|R_n^J\|_{\dot{H}_\gamma^1}^{2^\ast})^\frac{1}{2^\ast}}{(\sum_{j=1}^J\|\phi^j\|_{\dot{H}_\gamma^1}^2 + \|R_n^J\|_{\dot{H}_\gamma^1}^2)^\frac{1}{2}},
\end{align*}
which implies that
\begin{align*}
	\Bigl(\sum_{j=1}^{J^\ast}\|\phi^j\|_{\dot{H}_\gamma^1}^{2^\ast} + \lim_{J\rightarrow J^\ast}\lim_{n\rightarrow \infty}\|R_n^J\|_{\dot{H}_\gamma^1}^{2^\ast}\Bigr)^\frac{1}{2^\ast}
		\geq \Bigl(\sum_{j=1}^{J^\ast}\|\phi^j\|_{\dot{H}_\gamma^1}^2 + \lim_{J\rightarrow J^\ast}\lim_{n\rightarrow \infty}\|R_n^J\|_{\dot{H}_\gamma^1}^2\Bigr)^\frac{1}{2}
\end{align*}
and hence, $J^\ast = 1$ and $\lim_{J\rightarrow J^\ast}\lim_{n\rightarrow \infty}\|R_n^J\|_{\dot{H}_\gamma^1} = \lim_{n\rightarrow \infty}\|f_n - \phi_n^1\|_{\dot{H}_\gamma^1} = 0$.
That is, $f_n$ can be written as
\begin{align*}
	f_n
		= (e^{it_n\Delta_\gamma}\phi)_{[\mu_n]} + R_n
	\ \text{ and }\ 
	\lim_{n \rightarrow \infty}\|R_n\|_{\dot{H}_\gamma^1}
		= 0.
\end{align*}
Here, we claim $t_n \equiv 0$ for any $n \in \N$.
We assume for contradiction that $|t_n| \longrightarrow \infty$ as $n \rightarrow \infty$.
Then, we have
\begin{align*}
	\lim_{n\rightarrow \infty}\|f_n\|_{L^{2^\ast}}^{2^\ast}
		= \lim_{n\rightarrow \infty}\|e^{it_n\Delta_\gamma}\phi\|_{L^{2^\ast}}^{2^\ast} + \lim_{n\rightarrow \infty}\|R_n\|_{L^{2^\ast}}^{2^\ast}
		= 0
\end{align*}
from Proposition \ref{Sharp Sobolev embedding} and Theorem \ref{Dispersive estimate}.
However, this contradicts $E_\gamma[f_n] = E_0[W_0]$.
Therefore, $t_n \equiv 0$ for each $n \in \N$ and hence, $\phi$ is a optimizer of $C_S(\gamma)$, which is a contradiction.
\end{proof}

Next, we introduce a quadratic form $Q$ based on the energy decomposition near $W_0$ :
\begin{align}
	E_\gamma[W_0 + g]
		= E_0[W_0] + Q(g) + \frac{1}{2}\int_{\R^d}\frac{\gamma}{|x|^2}|W_0 + g|^2dx + \mathcal{O}(\|g\|_{\dot{H}^1}^3), \label{106}
\end{align}
where
\begin{align*}
	Q(g)
		:= \frac{1}{2}\|g\|_{\dot{H}^1}^2 - \frac{1}{2}\int_{\R^d}W_0^\frac{4}{d-2}\left[\frac{d+2}{d-2}(\text{Re}\,g)^2 + (\text{Im}\,g)^2\right]dx.
\end{align*}
The operator $Q$ can be also written as $Q(g) = \frac{1}{2}\<\mathcal{L}g,ig\>_{L^2}$ by using the linearized operator $\mathcal{L}$ around $W_0$ :
\begin{align*}
	\mathcal{L}
		:=
		\begin{pmatrix}
			0 & \Delta + W_0^{2^\ast-2}\, \\
			- \Delta - (2^\ast-1)W_0^{2^\ast-2} & 0
		\end{pmatrix},
\end{align*}
where we identify $\begin{pmatrix} a \\ b \end{pmatrix} \in \R^2$ with $a + ib \in \C$.
Then, if $v := u - W_0$ for a solution $u$ to \eqref{NLS}, we have
\begin{align*}
	\partial_t v + \mathcal{L}(v) + R(v)
		= 0,
\end{align*}
where $R(v)$ is defined as
\begin{align*}
	R(v)
		:= i(2^\ast-1)W_0^{2^\ast-2}v_1 - W_0^{2^\ast-2}v_2 + iW_0^{2^\ast-1} + i\frac{\gamma}{|x|^2}(v + W_0) - i|v + W_0|^{2^\ast-2}(v + W_0).
\end{align*}
We define $W_1 := - \left.\frac{d}{d\mu}\right|_{\mu=1}(W_0)_{[\mu]} = \frac{d-2}{2}W_0 + x\cdot\nabla W_0$.
By the direct calculation, we see that $W_0$, $iW_0$, $W_1$ form three orthogonal directions in the real Hilbert space $\dot{H}^1(\R^d)$.
We get $Q(W_0) < 0$ and $Q(iW_0) = 0$ immediately.
The equation \eqref{SP} deduces $(\Delta + (2^\ast-1)W_0^{2^\ast-2})W_1 = 0$ and hence, we have $Q(W_1) = -\frac{1}{2}\<(\Delta + (2^\ast-1)W_0^{2^\ast-2})W_1,W_1\>_{L^2} = 0$.
Setting $H := \text{Span}\{W_0, iW_0, W_1\} \subset \dot{H}^1(\R^d)$ and $H^\perp := \{v \in \dot{H}^1(\R^d) ; \<v,w\>_{\dot{H}^1} = 0\text{ for any }w \in H\}$.
Then, $Q(v)$ is positive definite for $v \in H^\perp$, that is, the following lemma holds.

\begin{lemma}[Coercivity of $Q$, \cite{DuyMer09, KaiZenZha22, Rey90}]\label{Coercivity}
For any radial function $v \in H^\perp$, we have
\begin{align*}
	Q(v)
		\sim \|v\|_{\dot{H}^1}^2.
\end{align*}
\end{lemma}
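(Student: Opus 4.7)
The upper bound $Q(v) \lesssim \|v\|_{\dot H^1}^2$ is immediate from the Sobolev embedding $\dot H^1(\R^d) \hookrightarrow L^{2^\ast}(\R^d)$, H\"older's inequality, and the fact that $W_0^{4/(d-2)} \in L^{d/2}(\R^d)$. The substance of the lemma is the lower bound, which is the classical coercivity of the linearization of the energy-critical NLS around the ground state.

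My plan is to split $v = v_1 + i v_2$ into radial real-valued parts and observe that
\begin{align*}
Q(v) = \frac{1}{2}\<L_+ v_1, v_1\>_{L^2} + \frac{1}{2}\<L_- v_2, v_2\>_{L^2},
\end{align*}
where $L_+ := -\Delta - (2^\ast-1) W_0^{2^\ast-2}$ and $L_- := -\Delta - W_0^{2^\ast-2}$ are the two Schr\"odinger operators obtained by linearizing the nonlinearity around $W_0$. The classical spectral analysis of Rey and Duyckaerts--Merle (used in \cite{DuyMer09,KaiZenZha22,Rey90}) tells us that on the radial sector $L_-$ is non-negative with simple kernel $\text{Span}(W_0)$, while $L_+$ has exactly one simple negative eigenvalue, simple kernel $\text{Span}(W_1)$, and positive spectrum otherwise.

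Next I will translate the $\dot H^1$-orthogonalities defining $H^\perp$ into weighted $L^2$ orthogonalities. Using $-\Delta W_0 = W_0^{2^\ast - 1}$ and $-\Delta W_1 = (2^\ast - 1) W_0^{2^\ast - 2} W_1$ (which follows from $L_+ W_1 = 0$), integration by parts converts $\<v, W_0\>_{\dot H^1} = \<v, i W_0\>_{\dot H^1} = \<v, W_1\>_{\dot H^1} = 0$ into, respectively,
\begin{align*}
\int_{\R^d} v_1 W_0^{2^\ast - 1}\, dx = 0, \quad \int_{\R^d} v_2 W_0^{2^\ast - 1}\, dx = 0, \quad \int_{\R^d} v_1 W_0^{2^\ast - 2} W_1\, dx = 0.
\end{align*}
The second of these is the $L_-$-orthogonality to the kernel direction $W_0$, so the spectral gap of $L_-$ gives $\<L_- v_2, v_2\> \gtrsim \|v_2\|_{\dot H^1}^2$. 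The third kills the kernel direction $W_1$ of $L_+$, while the first is designed to kill the negative direction of $L_+$.

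The main obstacle is the coercivity of $L_+$ on the constrained subspace for $v_1$, because the negative eigenfunction $\chi_1$ of $L_+$ is not $W_0$ itself; one needs that the overlap $\<\chi_1, W_0\>_{\dot H^1}$ is nonzero, which follows from the pointwise positivity of both $\chi_1$ and $W_0$ together with $L_+ W_0 = -(2^\ast - 2) W_0^{2^\ast - 1}$. A standard concentration-compactness/contradiction argument (a minimizing sequence for $\<L_+ \cdot, \cdot\>$ under the two orthogonalities whose weak $\dot H^1$-limit vanishes is ruled out by the scale-invariant structure of $L_+$; a nonzero weak limit yields an eigenfunction contradicting the spectral picture) produces the quantitative bound $\<L_+ v_1, v_1\> \gtrsim \|v_1\|_{\dot H^1}^2$. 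Combining the two estimates gives $Q(v) \gtrsim \|v_1\|_{\dot H^1}^2 + \|v_2\|_{\dot H^1}^2 \sim \|v\|_{\dot H^1}^2$, as claimed.
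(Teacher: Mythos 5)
The paper does not prove this lemma; it simply cites \cite{DuyMer09, KaiZenZha22, Rey90}. Your proposal correctly reconstructs the standard argument from those references: the algebra identifying $Q(v) = \tfrac12\<L_+v_1,v_1\> + \tfrac12\<L_-v_2,v_2\>$ is right (using $2^\ast-1 = \tfrac{d+2}{d-2}$ and $2^\ast-2 = \tfrac{4}{d-2}$), the translation of the $\dot H^1$-orthogonality conditions into the three weighted $L^2$-orthogonalities via $-\Delta W_0 = W_0^{2^\ast-1}$ and $-\Delta W_1 = (2^\ast-1)W_0^{2^\ast-2}W_1$ is correct, and the spectral picture for $L_\pm$ restricted to radial functions is the one established by Rey and used by Duyckaerts--Merle. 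The observation that $\<\chi_1, W_0\>_{\dot H^1} = \int \chi_1 W_0^{2^\ast-1}\,dx > 0$ by positivity, so that the orthogonality $\int v_1 W_0^{2^\ast-1}\,dx = 0$ is transversal to the negative direction, is exactly the right point. One caveat worth flagging if this were to be written in full: for $d = 3,4$ the potentials $W_0^{2^\ast-2}$ decay too slowly for $W_0 \in L^2$, so the spectral gap statements cannot be read off from $L^2$-spectral theory and really do require the compactness-in-$\dot H^1$ argument you allude to at the end (the relative form-compactness of $W_0^{2^\ast-2}$ against $-\Delta$ in $\dot H^1$, plus the scale-invariance dichotomy). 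That step carries the true content of the coercivity and would need to be spelled out to make the sketch a complete proof, but as a roadmap your proposal is accurate and consistent with the cited sources.
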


\begin{lemma}[\cite{DuyMer09}]\label{Modulation lemma}
There exists $\delta_0 > 0$ such that for any $f \in \dot{H}_\text{rad}^1(\R^d)$ with $E_\gamma[f] = E_0[W_0]$ and $|\delta(f)| < \delta_0$, there exists $(\theta,\mu) \in \R/ 2\pi\Z \times (0,\infty)$ satisfying $f_{[\theta,\mu]} \perp iW_0, W_1$.
The parameter $(\theta,\mu)$ is unique in $\R / 2\pi \Z \times \R$ and a mapping $f \mapsto (\theta,\mu)$ is $C^1$.
\end{lemma}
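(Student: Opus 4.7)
The plan is to apply the implicit function theorem near $W_0$, after first using the variational characterization (Lemma \ref{Variational characterization}) to reduce to the case where $f$ is close to $W_0$ in $\dot{H}^1$ up to phase and scaling.

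First I would define the map
\begin{align*}
  \Phi : \dot{H}^1_{\mathrm{rad}}(\R^d) \times (\R/2\pi\Z) \times (0,\infty) \longrightarrow \R^2, \qquad
  \Phi(f,\theta,\mu) := \bigl(\langle f_{[\theta,\mu]}, iW_0 \rangle_{\dot{H}^1},\ \langle f_{[\theta,\mu]}, W_1 \rangle_{\dot{H}^1}\bigr),
\end{align*}
and observe that $\Phi(W_0,0,1)=0$ since $W_0$, $iW_0$, $W_1$ are mutually orthogonal in the real Hilbert space $\dot{H}^1(\R^d)$ (a fact already recorded in the excerpt). The goal is to solve $\Phi(f,\theta,\mu)=0$ in $(\theta,\mu)$ as a $C^1$ function of $f$.

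Next I would compute the partial derivatives in $(\theta,\mu)$ at the base point $(W_0,0,1)$. Using
\begin{align*}
  \partial_\theta f_{[\theta,\mu]}\bigr|_{(0,1)} = i f, \qquad
  \partial_\mu f_{[\theta,\mu]}\bigr|_{(0,1)} = -\tfrac{d-2}{2}f - x\!\cdot\!\nabla f,
\end{align*}
and specializing to $f=W_0$ gives $\partial_\theta (W_0)_{[\theta,\mu]}|_{(0,1)} = iW_0$ and $\partial_\mu (W_0)_{[\theta,\mu]}|_{(0,1)} = -W_1$. Hence the Jacobian of $(\theta,\mu)\mapsto \Phi(W_0,\theta,\mu)$ at $(0,1)$ is the diagonal matrix
\begin{align*}
  \begin{pmatrix} \|iW_0\|_{\dot{H}^1}^2 & -\langle W_1, iW_0\rangle_{\dot{H}^1} \\ \langle iW_0, W_1\rangle_{\dot{H}^1} & -\|W_1\|_{\dot{H}^1}^2 \end{pmatrix} = \begin{pmatrix} \|W_0\|_{\dot{H}^1}^2 & 0 \\ 0 & -\|W_1\|_{\dot{H}^1}^2 \end{pmatrix},
\end{align*}
which is invertible because $W_1 \not\equiv 0$. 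Since $\Phi$ is smooth in $(\theta,\mu)$ and continuous (in fact $C^1$) in $f$ (scaling and phase act smoothly on $\dot H^1$, and the inner products are bounded bilinear forms), the implicit function theorem produces a neighborhood $U \subset \dot{H}^1_{\mathrm{rad}}(\R^d)$ of $W_0$, a neighborhood $V \subset (\R/2\pi\Z)\times(0,\infty)$ of $(0,1)$, and a unique $C^1$ map $g \mapsto (\theta(g),\mu(g))$ on $U$ with $\Phi(g,\theta(g),\mu(g))=0$ and $(\theta(g),\mu(g)) \in V$.

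Finally I would glue this with Lemma \ref{Variational characterization}. Given $\varepsilon>0$ chosen smaller than the radius of the neighborhood $U$ provided by the IFT, one selects $\delta_0=\delta_0(\varepsilon)$ so that $|\delta(f)|<\delta_0$ and $E_\gamma[f]=E_0[W_0]$ force the existence of $(\theta_0,\mu_0)$ with $\|f_{[\theta_0,\mu_0]}-W_0\|_{\dot{H}^1}<\varepsilon$, i.e. $f_{[\theta_0,\mu_0]} \in U$. Applying the IFT to $g=f_{[\theta_0,\mu_0]}$ yields $(\~\theta,\~\mu)$ with $g_{[\~\theta,\~\mu]} \perp iW_0, W_1$, and composing the scalings gives the desired $(\theta,\mu)=(\theta_0+\~\theta,\mu_0\~\mu)$ for $f$ itself; the $C^1$ dependence on $f$ follows from the $C^1$ output of the IFT and the smoothness of $(\theta_0,\mu_0)\mapsto$ composition. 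Uniqueness in $\R/2\pi\Z \times \R$ (local uniqueness in $\mu$ near the chosen scale) is precisely the uniqueness clause of the IFT.

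The main obstacle I anticipate is not any single calculation but ensuring the selection of $(\theta_0,\mu_0)$ from Lemma \ref{Variational characterization} is compatible with the neighborhood size in the IFT, i.e. choosing $\delta_0$ small enough (depending only on fixed constants $\|W_0\|_{\dot H^1}$ and $\|W_1\|_{\dot H^1}$) so that the implicit-function neighborhood of $W_0$ contains all $f_{[\theta_0,\mu_0]}$ produced by the variational step. This is a standard quantitative bookkeeping argument but is the only point where the two lemmas must be carefully interlocked.
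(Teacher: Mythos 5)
Your proposal is correct and reproduces the standard argument from Duyckaerts--Merle \cite{DuyMer09}, which is exactly what the paper cites for this lemma (the paper itself offers no independent proof). The implicit function theorem applied to the orthogonality map, combined with the variational pre-reduction from Lemma \ref{Variational characterization} to place $f_{[\theta_0,\mu_0]}$ in the IFT neighborhood, is the intended route; your Jacobian computation using the recorded orthogonality of $W_0, iW_0, W_1$ is right, and the only point to spell out more carefully is that joint $C^1$ regularity of $\Phi$ in $(f,\theta,\mu)$ should be justified by transferring the scaling and phase, via unitarity on $\dot H^1$, onto the fixed smooth functions $W_0$ and $W_1$ rather than onto the rough $f$.
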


The next proposition follows from Lemma \ref{Modulation lemma}.

\begin{proposition}[Modulation]\label{Modulation}
For each sufficiently small $\delta_0 > 0$, there exists $(\theta,\mu) : I_0 \longrightarrow \R/2\pi \Z \times \R$ such that
\begin{align}
	u_{[\theta,\mu]}(t,x)
		= (1+\alpha(t))W_0 + v(t), \label{145}
\end{align}
where
\begin{align*}
	1 + \alpha(t)
		= \frac{1}{\|W_0\|_{\dot{H}^1}^2}\<u_{[\theta,\mu]},W_0\>_{\dot{H}^1}
	\ \text{ and }\ 
	v(t)
		\in H^\perp.
\end{align*}
\end{proposition}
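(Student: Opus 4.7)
The plan is to apply Lemma \ref{Modulation lemma} pointwise in $t \in I_0$ to the function $u(t)$, and then to separate off the $W_0$-component of the rescaled solution so as to obtain the decomposition \eqref{145}.

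First, for every $t$ in the maximal existence interval, energy conservation yields $E_\gamma[u(t)] = E_\gamma[u_0] = E_0[W_0]$, and by the very definition of $I_0$ we have $|\delta(u(t))| < \delta_0$; the solution also lies in $\dot{H}_\text{rad}^1(\R^d)$ for all such $t$. Choosing $\delta_0$ smaller than the threshold in Lemma \ref{Modulation lemma}, that lemma produces, for each $t \in I_0$, a unique pair $(\theta(t),\mu(t)) \in \R/2\pi\Z \times \R$ with $u_{[\theta(t),\mu(t)]}(t) \perp iW_0, W_1$ in the real inner product on $\dot{H}^1(\R^d)$.

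Next, I define $\alpha(t)$ by the formula in the statement, namely
\begin{align*}
  1 + \alpha(t) = \|W_0\|_{\dot{H}^1}^{-2}\,\<u_{[\theta(t),\mu(t)]}(t),W_0\>_{\dot{H}^1},
\end{align*}
and set $v(t) := u_{[\theta(t),\mu(t)]}(t) - (1+\alpha(t))W_0$. By construction $v(t) \perp W_0$ in $\dot{H}^1$, and since $W_0$, $iW_0$, $W_1$ are mutually orthogonal in the real Hilbert space $\dot{H}^1(\R^d)$ (as observed just before Lemma \ref{Coercivity}), the orthogonalities $u_{[\theta(t),\mu(t)]}(t) \perp iW_0, W_1$ transfer to $v(t)$. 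Hence $v(t) \in H^\perp$, and \eqref{145} holds on $I_0$.

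Finally, the map $t \mapsto (\theta(t),\mu(t))$ is obtained by composing the $C^1$-regular implicit map $f \mapsto (\theta,\mu)$ from Lemma \ref{Modulation lemma} with the continuous flow $t \mapsto u(t)$ in $\dot{H}^1(\R^d)$ (Theorem \ref{Local theory}); the same continuity, applied to $\delta(t)$, also shows that $I_0$ is open. The genuinely analytic content is entirely contained in Lemma \ref{Modulation lemma} (via the implicit function theorem on the coercive quadratic form of Lemma \ref{Coercivity}), which is already cited. Consequently, the main ``obstacle'' in the present proposition is purely bookkeeping: orthogonally splitting the $W_0$-direction off from the rest and verifying that the resulting $v(t)$ lies in $H^\perp$. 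No further analytic input is required.
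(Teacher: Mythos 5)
Your proof is correct and follows exactly the route the paper intends: apply Lemma \ref{Modulation lemma} pointwise on $I_0$ (valid since energy conservation keeps $E_\gamma[u(t)] = E_0[W_0]$ and the definition of $I_0$ keeps $|\delta(t)| < \delta_0$), then project off the $W_0$-direction with the stated $\alpha(t)$ and check $v(t)\in H^\perp$ by linearity of the real inner product together with the mutual orthogonality of $W_0, iW_0, W_1$. The paper simply asserts that the proposition ``follows from Lemma \ref{Modulation lemma},'' and your write-up supplies precisely the short bookkeeping argument that justifies it, including the regularity of $t \mapsto (\theta(t),\mu(t))$ via composition with the $C^1$ implicit map.
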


The parameters given in Proposition \ref{Modulation} have the following relations (Lemmas \ref{Parameters for modulation} and \ref{Parameters for modulation2}).

\begin{lemma}\label{Parameters for modulation}
Let $\gamma > 0$ and $\delta_0 > 0$ be small enough.
Then, we have
\begin{align*}
	\[\int_{\R^d}\frac{\gamma}{|x|^2}|u(t,x)|^2dx\]^\frac{1}{2} + \|v(t)\|_{\dot{H}^1}
		\sim |\alpha(t)|
		\sim \|w(t)\|_{\dot{H}^1}
		\sim |\delta(t)|
\end{align*}
for any $t \in I_0$, where $w(t) := \alpha(t)W_0 + v(t)$ and $(\theta,\lambda)$ is given in Proposition \ref{Modulation}.
\end{lemma}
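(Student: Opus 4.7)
The plan is to combine the energy expansion \eqref{106}, the orthogonality conditions built into Proposition \ref{Modulation}, and the coercivity of $Q$ from Lemma \ref{Coercivity} to control all four quantities simultaneously against $|\alpha|$. Throughout, set $A := \|W_0\|_{\dot H^1}^2$ and $\tilde P(t) := \int_{\mathbb{R}^d}|u(t,x)|^2/|x|^2\,dx$; by scale invariance of the Hardy integral, $\tilde P(t) = \int |u_{[\theta(t),\mu(t)]}|^2/|x|^2\,dx$, which is $\int \gamma|u|^2/|x|^2$ up to the factor $\gamma$.

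First I would substitute the modulation decomposition $u_{[\theta,\mu]} = W_0 + w$ into \eqref{106}. Since the $\dot H_\gamma^1$ and $L^{2^\ast}$ norms are both invariant under scaling and phase, $E_\gamma[u_{[\theta,\mu]}] = E_\gamma[u] = E_0[W_0]$, so subtracting $E_0[W_0]$ from both sides of \eqref{106} yields
\begin{align*}
Q(w) + \frac{\gamma}{2}\tilde P(t) = \mathcal{O}(\|w\|_{\dot H^1}^3).
\end{align*}
Next, expand $Q(w) = Q(\alpha W_0 + v) = \alpha^2 Q(W_0) + Q(v)$: the bilinear cross term collapses because $v \perp W_0$ in the real $\dot H^1$-inner product forces $\<W_0, \Re v\>_{\dot H^1} = 0$, and a direct computation using $-\Delta W_0 = W_0^{2^\ast-1}$ gives $Q(W_0) = -\tfrac{2}{d-2}A$. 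Hence the identity becomes
\begin{align*}
Q(v) + \frac{\gamma}{2}\tilde P(t) = \frac{2\alpha^2}{d-2}A + \mathcal{O}(\|w\|_{\dot H^1}^3).
\end{align*}

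Since $v$ is radial and lies in $H^\perp$, Lemma \ref{Coercivity} gives $c\|v\|_{\dot H^1}^2 \le Q(v) \le \tfrac{1}{2}\|v\|_{\dot H^1}^2$. The upper coercivity bound on $Q(v)$ produces $\alpha^2 \lesssim \|v\|_{\dot H^1}^2 + \gamma\tilde P + \mathcal{O}(\|w\|^3)$, while the lower one produces $\|v\|_{\dot H^1}^2 + \gamma\tilde P \lesssim \alpha^2 + \mathcal{O}(\|w\|^3)$; once the cubic error is absorbed this yields $[\int\gamma|u|^2/|x|^2\,dx]^{1/2} + \|v\|_{\dot H^1} \sim |\alpha|$. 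The orthogonality $v \perp W_0$ also gives $\|w\|_{\dot H^1}^2 = \alpha^2 A + \|v\|_{\dot H^1}^2 \sim \alpha^2$, hence $\|w\|_{\dot H^1} \sim |\alpha|$. Finally, scale invariance of $\dot H_\gamma^1$ together with $v \perp W_0$ yields $\|u\|_{\dot H_\gamma^1}^2 = (1+\alpha)^2 A + \|v\|_{\dot H^1}^2 + \gamma\tilde P$, so
\begin{align*}
\delta(t) = -(2\alpha + \alpha^2)A - \|v\|_{\dot H^1}^2 - \gamma\tilde P(t),
\end{align*}
and the linear term $-2\alpha A$ dominates the $\mathcal{O}(\alpha^2)$ remainder (since $\|v\|^2, \gamma\tilde P \lesssim \alpha^2$), giving $|\delta| \sim |\alpha|$ and closing the four-way equivalence.

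The one delicate point is absorbing the cubic remainder $\mathcal{O}(\|w\|_{\dot H^1}^3)$ into the quadratic balance; this requires $\|w\|_{\dot H^1}$ to be a priori small on $I_0$. I would secure it by feeding Lemma \ref{Variational characterization} (which supplies a scaling under which $u$ is $\dot H^1$-close to $W_0$) into the implicit-function uniqueness of Lemma \ref{Modulation lemma}, after shrinking $\delta_0$ if necessary, so that the modulated $u_{[\theta,\mu]}$ itself is as $\dot H^1$-close to $W_0$ as required; the cubic term is then dominated by the quadratic quantities $\alpha^2$, $\|v\|_{\dot H^1}^2$ and $\gamma\tilde P$, completing the argument.
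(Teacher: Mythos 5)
Your proposal is correct and follows essentially the same route as the paper's proof: expand the energy via \eqref{106}, exploit the orthogonality $v\perp W_0, iW_0$ to collapse cross terms and split $Q(w)=\alpha^2 Q(W_0)+Q(v)$, invoke the two-sided coercivity of $Q$ on $H^\perp$ from Lemma \ref{Coercivity}, absorb the cubic error using the a priori smallness of $\|w\|_{\dot H^1}$ on $I_0$, and finally read $|\delta|\sim|\alpha|$ from the explicit identity $\delta=-(2\alpha+\alpha^2)\|W_0\|_{\dot H^1}^2-\|v\|_{\dot H^1}^2-\int\gamma|u|^2/|x|^2$. The only cosmetic difference is that you compute $Q(W_0)=-\tfrac{2}{d-2}\|W_0\|_{\dot H^1}^2$ explicitly, whereas the paper only records $Q(W_0)<0$, but this does not change the argument.
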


\begin{proof}
By the taking $w$, we get
\begin{align*}
	\alpha(t)
		= \frac{1}{\|W_0\|_{\dot{H}^1}^2}\<w,W_0\>_{\dot{H}^1}
		\in \R
\end{align*}
and hence,
\begin{align}
	|\alpha(t)|
		\lesssim \|w\|_{\dot{H}^1}
		\ll 1 \label{130}
\end{align}
holds.
By the construction, we have
\begin{align}
	\|w\|_{\dot{H}^1}^2
		= |\alpha|^2\|W_0\|_{\dot{H}^1}^2 + \|v\|_{\dot{H}^1}^2 \label{131}
\end{align}
and hence,
\begin{align}
	Q(w)
		& = |\alpha|^2Q(W_0) + Q(v) - \frac{\alpha(d+2)}{d-2}\<W_0,v\>_{\dot{H}^1} \notag \\
		& = |\alpha|^2Q(W_0) + Q(v). \label{128}
\end{align}
Applying \eqref{106} and $E_\gamma[W_0 + w] = E_\gamma[u_{[\theta,\mu]}] = E_0[W_0]$, we have
\begin{align}
	0
		= Q(w) + \frac{1}{2}\int_{\R^d}\frac{\gamma}{|x|^2}|u|^2dx + \mathcal{O}(\|w\|_{\dot{H}^1}^3). \label{129}
\end{align}
It follows from Lemma \ref{Coercivity}, \eqref{130}, \eqref{131}, \eqref{128}, and \eqref{129} that
\begin{align*}
	\|v\|_{\dot{H}^1}^2 + \int_{\R^d}\frac{\gamma}{|x|^2}|u|^2dx
		& \lesssim Q(v) + \int_{\R^d}\frac{\gamma}{|x|^2}|u|^2dx
		\lesssim |\alpha|^2 + \|w\|_{\dot{H}^1}^3 \\
		& \lesssim |\alpha|^2 + |\alpha|^3 + \|v\|_{\dot{H}^1}^3
		\lesssim |\alpha|^2 + \|v\|_{\dot{H}^1}^3.
\end{align*}
Noting $\|v\|_{\dot{H}^1} \ll 1$ holds from \eqref{130} and \eqref{131}, we obtain
\begin{align}
	\|v\|_{\dot{H}^1}^2 + \int_{\R^d}\frac{\gamma}{|x|^2}|u|^2dx
		\lesssim |\alpha|^2. \label{132}
\end{align}
Recalling $Q(W_0) < 0$, we have
\begin{align*}
	|\alpha|^2
		& \sim - Q(W_0)|\alpha|^2
		= Q(v) - Q(w)
		\lesssim \|v\|_{\dot{H}^1}^2 + \int_{\R^d}\frac{\gamma}{|x|^2}|u|^2dx + \mathcal{O}(\|w\|_{\dot{H}^1}^3) \\
		& \lesssim \|v\|_{\dot{H}^1}^2 + \int_{\R^d}\frac{\gamma}{|x|^2}|u|^2dx + \|v\|_{\dot{H}^1}^3 + |\alpha|^3
		\lesssim \|v\|_{\dot{H}^1}^2 + \int_{\R^d}\frac{\gamma}{|x|^2}|u|^2dx + |\alpha|^3.
\end{align*}
Using \eqref{130}, we get
\begin{align}
	|\alpha|^2
		\lesssim \|v\|_{\dot{H}^1}^2 + \int_{\R^d}\frac{\gamma}{|x|^2}|u|^2dx. \label{133}
\end{align}
Collecting \eqref{132}, \eqref{130}, \eqref{131}, and \eqref{133}, we obtain
\begin{align*}
	\|v\|_{\dot{H}^1}^2 + \int_{\R^d}\frac{\gamma}{|x|^2}|u|^2dx
		\lesssim |\alpha|^2
		\lesssim \|w\|_{\dot{H}^1}^2
		\lesssim |\alpha|^2 + \|v\|_{\dot{H}^1}^2
		\lesssim \|v\|_{\dot{H}^1}^2 + \int_{\R^d}\frac{\gamma}{|x|^2}|u|^2dx.
\end{align*}
Finally, we check $|\delta(t)| \sim |\alpha|$.
By the definition $\delta(t)$ and $v \in H^\perp$, we have
\begin{align*}
	\delta(t)
		& = \|W_0\|_{\dot{H}^1}^2 - \|u\|_{\dot{H}_\gamma^1}^2
		= \|W_0\|_{\dot{H}^1}^2 - \|u_{[\theta,\mu]}\|_{\dot{H}^1}^2 - \int_{\R^d}\frac{\gamma}{|x|^2}|u|^2dx \\
		& = \|W_0\|_{\dot{H}^1}^2 - \|(1 + \alpha)W_0 + v\|_{\dot{H}^1}^2 - \int_{\R^d}\frac{\gamma}{|x|^2}|u|^2dx \\
		& = - (2\alpha + \alpha^2)\|W_0\|_{\dot{H}^1}^2 - \int_{\R^d}\frac{\gamma}{|x|^2}|u|^2dx - \|v\|_{\dot{H}^1}^2.
\end{align*}
Therefore, we obtain
\begin{align*}
	|\delta(t)|
		\lesssim |\alpha| + |\alpha|^2
		\lesssim |\alpha|.
\end{align*}
Thus, we have
\begin{align*}
	|\alpha|
		\sim 2\|W_0\|_{\dot{H}^1}^2|\alpha|
		= \left|\delta(t) +  |\alpha|^2\|W_0\|_{\dot{H}^1}^2 + \int_{\R^d}\frac{\gamma}{|x|^2}|u|^2dx + \|v\|_{\dot{H}^1}^2\right|
		\lesssim |\delta(t)| + |\alpha|^2
		\lesssim |\delta(t)|
\end{align*}
and hence, $|\alpha| \lesssim |\delta|$ holds.
\end{proof}

\begin{lemma}\label{Parameters for modulation2}
Let $\delta_0 > 0$ be small enough.
Then, we have
\begin{align*}
	|\alpha'(t)|
		+ |\theta'(t)|
		+ \left|\frac{\mu'(t)}{\mu(t)}\right|
		\lesssim \mu(t)^2|\delta(t)|
\end{align*}
for any $t \in I_0$, where $(\theta,\mu)$ and $\alpha$ are given in Proposition \ref{Modulation}.
\end{lemma}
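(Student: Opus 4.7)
The plan is to differentiate the orthogonality relations defining $(\theta(t),\mu(t))$ together with the defining identity $(1+\alpha(t))\|W_0\|_{\dot{H}^1}^2=\langle u_{[\theta(t),\mu(t)]},W_0\rangle_{\dot{H}^1}$ in time, combine them with the PDE satisfied by the rescaled function $g(t,\cdot):=u_{[\theta(t),\mu(t)]}(t,\cdot)$, and solve the resulting linear system. A direct chain-rule computation using the scaling structure of \eqref{NLS} gives
\begin{align*}
\partial_t g=i\theta'(t)\,g-\frac{\mu'(t)}{\mu(t)}\Lambda g+i\mu(t)^2\Delta_\gamma g+i\mu(t)^2|g|^{\frac{4}{d-2}}g,
\end{align*}
where $\Lambda:=\tfrac{d-2}{2}+x\cdot\nabla$, so that $\Lambda W_0=W_1$. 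Since $v(t)\in\mathrm{Span}\{W_0,iW_0,W_1\}^{\perp}$ in $\dot{H}^1$ and these test directions are time-independent, differentiation yields
\begin{align*}
\alpha'(t)\,\|W_0\|_{\dot{H}^1}^2=\langle\partial_t g,W_0\rangle_{\dot{H}^1},\quad\langle\partial_t g,iW_0\rangle_{\dot{H}^1}=0,\quad\langle\partial_t g,W_1\rangle_{\dot{H}^1}=0.
\end{align*}

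Writing $g=(1+\alpha)W_0+v$ and linearizing the conservative part around $W_0$ using $\Delta W_0+W_0^{(d+2)/(d-2)}=0$ together with the identity $i\mathcal{L}v=\Delta v+W_0^{4/(d-2)}[p\,\mathrm{Re}(v)+i\,\mathrm{Im}(v)]$ for $p=\tfrac{d+2}{d-2}$, one obtains
\begin{align*}
i\mu^2\bigl(\Delta g+|g|^{\frac{4}{d-2}}g\bigr)=i\mu^2(p-1)\alpha\,W_0^{p}-\mu^2\mathcal{L}v+R,
\end{align*}
where $R$ collects nonlinear remainders of order $\mu^2(|\alpha|^{2}+|\alpha|\|v\|_{\dot{H}^1}+\|v\|_{\dot{H}^1}^{2})$; the potential contribution $-i\mu^2\gamma|x|^{-2}g$ is kept separately. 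Substituting into the three projection identities produces a linear system $M(\alpha',\theta',\mu'/\mu)^{T}=F$. By the mutual $\dot{H}^1$-orthogonality of $\{W_0,iW_0,W_1\}$ together with the kernel relations $\mathcal{L}(iW_0)=\mathcal{L}(W_1)=0$ (equivalent to $Q(iW_0)=Q(W_1)=0$), the leading-order part of $M$ is diagonal with strictly positive entries controlled by $\|W_0\|_{\dot{H}^1}^{2}$ and $\|W_1\|_{\dot{H}^1}^{2}$, while the corrections are $O(|\alpha|+\|v\|_{\dot{H}^1})=O(|\delta|)$; thus $M$ is invertible once $\delta_0$ is small.

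It then remains to bound the source vector $F$ by $\mu^2|\delta|$. The pieces $\mu^2(p-1)\alpha W_0^p$ and $\mu^2\mathcal{L}v$ produce inner products of size $\lesssim\mu^2|\alpha|$ and $\lesssim\mu^2\|v\|_{\dot{H}^1}$ respectively (the latter via one integration by parts against $W_0$ or $W_1$), both $\lesssim\mu^2|\delta|$ by Lemma~\ref{Parameters for modulation}, while $R$ contributes $O(\mu^2|\delta|^{2})$. The delicate contribution is the projection of $-i\mu^2\gamma|x|^{-2}g$ onto the kernel directions: after integration by parts it reduces to integrals of the form $\int g\,W_0^{p}/|x|^{2}$, which, decomposed via $g=(1+\alpha)W_0+v$ and combined with Hardy's inequality and the quantitative estimate $(\int\gamma|x|^{-2}|u|^{2})^{1/2}\sim|\delta|$ from Lemma~\ref{Parameters for modulation}, are again $O(\mu^2|\delta|)$. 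Inverting $M$ and summing the contributions gives the claim. The main obstacle is precisely this last step: in contrast to the $\gamma=0$ Duyckaerts--Merle setting, the potential term does not vanish at $g=W_0$ nor does its projection onto $iW_0$ and $W_1$ vanish automatically, so the required $|\delta|$-smallness must be imported from the quantitative modulation information of the preceding lemma rather than from orthogonality alone.
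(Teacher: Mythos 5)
Your proposal reconstructs the Duyckaerts--Merle modulation argument that the paper cites without elaboration, and it follows the same route: derive the rescaled equation for $g=u_{[\theta,\mu]}$, project onto $W_0$, $iW_0$, $W_1$, invert the nearly diagonal linear system, and bound the source vector by $\mu^2|\delta|$. Your handling of the extra potential contribution --- Cauchy--Schwarz against the measure $\gamma|x|^{-2}\,dx$ together with the bound $\bigl(\int\gamma|x|^{-2}|u|^{2}\bigr)^{1/2}\sim|\delta|$ from Lemma~\ref{Parameters for modulation} --- is exactly the combination the paper's one-line proof (``the argument in \cite{DuyMer09} combined with Lemma~\ref{Parameters for modulation}'') is pointing to, so this is essentially the paper's argument.
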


\begin{proof}
This lemma follows from the argument in \cite{DuyMer09} combined with Lemma \ref{Parameters for modulation}.
\end{proof}

\section{Scattering}\label{Sec: Scattering}

In this section, we prove the scattering result in Theorem \ref{Main theorem}.

\subsection{Gradient separation}

We see that $BW_+$ and $BW_-$ are invariant for the flow of \eqref{NLS}.

\begin{lemma}\label{Invariant sets}
Let $\gamma > 0$ and $u_0 \in \dot{H}_\text{rad}^1(\R^d)$.
\begin{itemize}
\item[(1)]
If $u_0 \in BW_+$, then the solution $u$ to \eqref{NLS} with \eqref{IC} satisfies $u(t) \in BW_+$ for any $t \in (T_{\min},T_{\max})$.
\item[(2)]
If $u_0 \in BW_-$, then the solution $u$ to \eqref{NLS} with \eqref{IC} satisfies $u(t) \in BW_-$ for any $t \in (T_{\min},T_{\max})$.
\end{itemize}
\end{lemma}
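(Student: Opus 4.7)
The plan is to combine energy conservation, preservation of radial symmetry, and a continuity argument, with the nontrivial input being the fact (already noted in the remark after Theorem \ref{Main theorem}) that the threshold $\|u(t)\|_{\dot H_\gamma^1}=\|W_0\|_{\dot H^1}$ cannot be achieved under the energy constraint $E_\gamma[u(t)]=E_0[W_0]$. Since $\|W_0\|_{\dot H^1}$ is strictly separated from the two open conditions defining $BW_\pm$, continuity of $t\mapsto \|u(t)\|_{\dot H_\gamma^1}$ will then force invariance of the sign of $\delta(t)=\|W_0\|_{\dot H^1}^2-\|u(t)\|_{\dot H_\gamma^1}^2$.

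First I would observe that $u(t)\in C((T_{\min},T_{\max});\dot H_\gamma^1(\R^d))$ by Theorem \ref{Local theory}, and by uniqueness of the solution together with the radial invariance of \eqref{NLS}, the radial symmetry of $u_0$ propagates, so $u(t)\in\dot H^1_{\text{rad}}(\R^d)$ on its entire life-span. Energy conservation (valid under $u_0\in\dot H^1$) gives $E_\gamma[u(t)]=E_\gamma[u_0]=E_0[W_0]$ for all $t\in(T_{\min},T_{\max})$. Hence the first defining condition of $BW_\pm$ is automatically preserved, and only the sharp inequality on $\|u(t)\|_{\dot H_\gamma^1}$ needs to be controlled.

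The crux is to exclude the threshold. Suppose for contradiction that there exists $t_\ast\in(T_{\min},T_{\max})$ with $\|u(t_\ast)\|_{\dot H_\gamma^1}=\|W_0\|_{\dot H^1}$. Combining this with $E_\gamma[u(t_\ast)]=E_0[W_0]=\frac{1}{2}\|W_0\|_{\dot H^1}^2-\frac{1}{2^\ast}\|W_0\|_{L^{2^\ast}}^{2^\ast}$ gives $\|u(t_\ast)\|_{L^{2^\ast}}=\|W_0\|_{L^{2^\ast}}$, and therefore
\begin{align*}
\frac{\|u(t_\ast)\|_{L^{2^\ast}}}{\|u(t_\ast)\|_{\dot H_\gamma^1}}
=\frac{\|W_0\|_{L^{2^\ast}}}{\|W_0\|_{\dot H^1}}
=C_S(0)=C_S(\gamma),
\end{align*}
where the last equality uses $\gamma>0$ from Proposition \ref{Sharp Sobolev embedding}. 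Thus $u(t_\ast)$ would be a nontrivial optimizer of the sharp Sobolev embedding for $-\Delta_\gamma$, contradicting the final assertion of Proposition \ref{Sharp Sobolev embedding} that no such optimizer exists when $\gamma>0$. Hence $\delta(t)\neq 0$ throughout the life-span.

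Finally, by continuity of $t\mapsto\|u(t)\|_{\dot H_\gamma^1}$ and connectedness of $(T_{\min},T_{\max})$, the continuous nowhere-vanishing function $\delta(t)$ has constant sign on $(T_{\min},T_{\max})$. If $u_0\in BW_+$ then $\delta(0)>0$, so $\|u(t)\|_{\dot H_\gamma^1}<\|W_0\|_{\dot H^1}$ for all $t$, proving (1); the argument for (2) is identical with the opposite sign. I expect no serious obstacle here — everything reduces to energy conservation plus the variational rigidity of $W_0$ as the unique (up to symmetries) optimizer of the Sobolev embedding and its failure to optimize the $\dot H_\gamma^1$ version for $\gamma>0$.
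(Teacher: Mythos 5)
Your proof is correct and uses essentially the same argument as the paper: energy conservation reduces the matter to showing $\delta(t)$ never vanishes, and vanishing would force $u(t_\ast)$ to be a nontrivial optimizer of the sharp Sobolev inequality \eqref{001} for $\gamma>0$, contradicting Proposition \ref{Sharp Sobolev embedding}. The paper states only this contradiction step, leaving the continuity/connectedness argument and the $L^{2^\ast}$-norm computation implicit; you spell those out correctly.
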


\begin{proof}
For contradiction, we assume that there exists $t_0 \in (T_{\min},T_{\max})$ such that $\|u(t_0)\|_{\dot{H}_\gamma^1} = \|W_0\|_{\dot{H}^1}$.
Combined with $E_\gamma[u_0] = E_0[W_0]$, we get $\|u(t_0)\|_{L^{2^\ast}} = \|W_0\|_{L^{2^\ast}}$, which implies that $u(t_0)$ is a optimizer of \eqref{001}.
This is a contradiction.
The same argument yields the claim (2).
\end{proof}

\begin{lemma}[Coercivity of energy, \cite{Kai17}]\label{Coercivity of energy}
Let $f \in \dot{H}_\gamma^1(\R^d)$.
If $f$ satisfies $\|f\|_{\dot{H}_\gamma^1} \leq \|W_0\|_{\dot{H}^1}$, then
\begin{align*}
	\frac{1}{d}\|f\|_{\dot{H}_\gamma^1}^2
		\leq E_\gamma[f]
		\leq \frac{1}{2}\|f\|_{\dot{H}_\gamma^1}^2.
\end{align*}
\end{lemma}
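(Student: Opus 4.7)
The upper bound $E_\gamma[f] \leq \tfrac12\|f\|_{\dot H_\gamma^1}^2$ is immediate from the definition of $E_\gamma$, since the potential-energy term $\tfrac{1}{2^\ast}\|f\|_{L^{2^\ast}}^{2^\ast}$ is non-negative. So the work is entirely in establishing the lower bound.

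For the lower bound I would rewrite
\begin{align*}
	E_\gamma[f] - \frac{1}{d}\|f\|_{\dot H_\gamma^1}^2
		= \(\frac{1}{2} - \frac{1}{d}\)\|f\|_{\dot H_\gamma^1}^2 - \frac{1}{2^\ast}\|f\|_{L^{2^\ast}}^{2^\ast}
		= \frac{1}{2^\ast}\(\|f\|_{\dot H_\gamma^1}^2 - \|f\|_{L^{2^\ast}}^{2^\ast}\),
\end{align*}
using $\tfrac12 - \tfrac1d = \tfrac{d-2}{2d} = \tfrac{1}{2^\ast}$. Thus it suffices to show $\|f\|_{L^{2^\ast}}^{2^\ast} \leq \|f\|_{\dot H_\gamma^1}^2$ under the hypothesis $\|f\|_{\dot H_\gamma^1} \leq \|W_0\|_{\dot H^1}$.

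The plan is to combine the sharp Sobolev embedding with a Pohozaev-type identity for $W_0$. Since $\gamma > 0$, Proposition \ref{Sharp Sobolev embedding} gives $C_S(\gamma) = C_S(0)$, so $\|f\|_{L^{2^\ast}} \leq C_S(0)\|f\|_{\dot H_\gamma^1}$. Testing \eqref{SP} at $\gamma = 0$ against $W_0$ yields $\|W_0\|_{\dot H^1}^2 = \|W_0\|_{L^{2^\ast}}^{2^\ast}$, and since equality in \eqref{001} is attained by $W_0$ for $\gamma = 0$, we get $C_S(0)^{2^\ast} = \|W_0\|_{L^{2^\ast}}^{2^\ast}/\|W_0\|_{\dot H^1}^{2^\ast} = \|W_0\|_{\dot H^1}^{2 - 2^\ast}$. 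Therefore
\begin{align*}
	\|f\|_{L^{2^\ast}}^{2^\ast}
		\leq C_S(0)^{2^\ast}\|f\|_{\dot H_\gamma^1}^{2^\ast}
		= \|W_0\|_{\dot H^1}^{2-2^\ast}\|f\|_{\dot H_\gamma^1}^2\,\|f\|_{\dot H_\gamma^1}^{2^\ast - 2}
		\leq \|W_0\|_{\dot H^1}^{2-2^\ast}\|W_0\|_{\dot H^1}^{2^\ast - 2}\|f\|_{\dot H_\gamma^1}^2
		= \|f\|_{\dot H_\gamma^1}^2,
\end{align*}
where the last inequality uses the hypothesis $\|f\|_{\dot H_\gamma^1} \leq \|W_0\|_{\dot H^1}$ together with $2^\ast - 2 > 0$. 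Plugging this back into the display for $E_\gamma[f] - \tfrac{1}{d}\|f\|_{\dot H_\gamma^1}^2$ gives the claimed lower bound.

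There is really no obstacle here: the only delicate point is correctly identifying $C_S(0)$ in terms of $\|W_0\|_{\dot H^1}$ via the Pohozaev-type identity, after which the estimate reduces to plugging in the hypothesis. The argument crucially uses $\gamma > 0$, which is what makes $C_S(\gamma) = C_S(0)$ so that the sharp constant is still controlled by the unpotential ground state $W_0$.
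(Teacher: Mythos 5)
Your proof is correct and is the standard Kenig--Merle-style coercivity argument: the paper cites this lemma from Kai \cite{Kai17} without reproducing a proof, and the route you take (rewrite $E_\gamma[f]-\tfrac1d\|f\|_{\dot H_\gamma^1}^2 = \tfrac{1}{2^\ast}(\|f\|_{\dot H_\gamma^1}^2 - \|f\|_{L^{2^\ast}}^{2^\ast})$, then control the $L^{2^\ast}$ norm via $C_S(\gamma)=C_S(0)$ for $\gamma>0$ and the Pohozaev identity $\|W_0\|_{\dot H^1}^2=\|W_0\|_{L^{2^\ast}}^{2^\ast}$, which together give $C_S(0)^{2^\ast}=\|W_0\|_{\dot H^1}^{2-2^\ast}$) is exactly the expected one. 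The algebra $\tfrac12-\tfrac1d=\tfrac{1}{2^\ast}$ and the final estimate using $\|f\|_{\dot H_\gamma^1}\leq\|W_0\|_{\dot H^1}$ with $2^\ast-2>0$ are all correct, so nothing to object to.
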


\subsection{Existence of a soliton-like solution}

In this subsection, we prove that if the scattering part in Theorem \ref{Main theorem} does not hold, there exists a soliton-like solution $u$.

\begin{proposition}\label{Precompact of critical solution}
Let $\gamma > 0$ and $u_0 \in BW_+$.
Then, the solution $u$ to \eqref{NLS} exists globally in time.
Moreover, if $\|u\|_{S([0,\infty))} = \infty$, then there exists $\lambda(t) : [0,\infty) \longrightarrow (0,\infty)$ such that $\{u(t)_{[\lambda(t)]} : t \in [0,\infty)\}$ is precompact in $\dot{H}^1(\R^d)$.
A similar result holds on $(-\infty,0]$.
\end{proposition}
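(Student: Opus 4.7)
The plan is to follow the Kenig--Merle concentration--compactness scheme at the mass--energy threshold $E_\gamma[u_0] = E_0[W_0]$, $\|u_0\|_{\dot{H}_\gamma^1} < \|W_0\|_{\dot{H}^1}$. I discuss only forward time; the backward case is symmetric. The first ingredient is a uniform energy bound: by Lemma \ref{Invariant sets} the strict inequality $\|u(t)\|_{\dot{H}_\gamma^1} < \|W_0\|_{\dot{H}^1}$ is preserved on the lifespan, and Lemma \ref{Coercivity of energy} combined with energy conservation gives $\|u(t)\|_{\dot{H}_\gamma^1}^2 \leq d\,E_0[W_0]$ uniformly in $t$.

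Next, assume either $T_{\max} < \infty$, or $T_{\max} = \infty$ and $\|u\|_{S([0,\infty))} = \infty$; in either case $\|u\|_{S([0,T_{\max}))} = \infty$ by the blow-up criterion of Theorem \ref{Local theory}. Choose $t_n \in [0,T_{\max})$ with $\|u\|_{S([t_n,T_{\max}))} = \infty$, and apply the radial linear profile decomposition (Lemma \ref{Linear profile decomposition}) to $\{u(t_n)\}$, producing profiles $\phi^j$, scales $\lambda_n^j$, time shifts $t_n^j$, and a remainder $R_n^J$. The Pythagorean identities \eqref{134}--\eqref{136}, combined with $E_\gamma[u(t_n)] = E_0[W_0]$, the uniform subcritical size of $\|u(t_n)\|_{\dot{H}_\gamma^1}$, and Lemma \ref{Coercivity of energy}, place every profile in the closed subcritical regime $0 \leq E_\gamma[\phi^j] \leq E_0[W_0]$ and $\|\phi_n^j\|_{\dot{H}_\gamma^1} \leq \|W_0\|_{\dot{H}^1}$.

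Suppose, for contradiction, that two or more profiles are nontrivial. Then each inherits \emph{strict} subcritical bounds, so the nonlinear profile $\tilde{\phi}^j$ attached to it (built via local theory when $t_n^j \equiv 0$, or via Proposition \ref{Existence of a wave operator} when $t_n^j \to \pm\infty$) scatters by Theorem \ref{Below the Talenti}(1). The scale/time-shift orthogonality \eqref{110}, the remainder smallness \eqref{115}, and the stability theorem (Theorem \ref{Stability}) then combine to produce an approximate solution of \eqref{NLS} on $[t_n,T_{\max})$ with finite $S$-norm for large $n$, contradicting the choice of $t_n$. Hence exactly one profile $\phi^1$ survives, with $E_\gamma[\phi^1] = E_0[W_0]$ and $\|R_n\|_{\dot{H}_\gamma^1} \to 0$. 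A further step rules out $|t_n^1| \to \infty$: by Strichartz estimates and Theorem \ref{Dispersive estimate}, $\|e^{it\Delta_\gamma}\phi_n^1\|_{S([0,\infty))}$ would vanish in the limit, again forcing $\|u\|_{S([t_n,\infty))}$ to be finite for large $n$. Consequently $t_n^1 \equiv 0$ and $u(t_n) = (\phi^1)_{[\lambda_n^1]} + o_{\dot{H}_\gamma^1}(1)$.

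Setting $\lambda(t_n) := \lambda_n^1$ and running the same argument along every sequence $(t_n)$ with $\|u\|_{S([t_n,T_{\max}))} = \infty$, the family $\{u(t)_{[\lambda(t)]}\}$ is precompact in $\dot{H}^1$. Global existence then follows: if $T_{\max} < \infty$, the precompact profile $\phi^1$ concentrating at shrinking scale would violate the local smoothing bound of Lemma \ref{Cor of local smoothing}, so $T_{\max} = +\infty$. The main difficulty lies in the bookkeeping of the \emph{scale parameters}: unlike the intercritical setting, the profiles live at (possibly very) different scales, and one must verify that the below-threshold scattering result (Theorem \ref{Below the Talenti}) and the stability bootstrap both tolerate the rescaling — which is where the scale invariance of the potential $\gamma/|x|^2$ and the sign assumption $\gamma>0$ enter crucially.
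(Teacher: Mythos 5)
Your profile-decomposition argument establishing \emph{sequential} precompactness is essentially the same as the paper's (Lemma \ref{Precompact of critical solution along a sequence}): apply the radial linear profile decomposition, use the Pythagorean identities together with Lemma \ref{Invariant sets} and the coercivity Lemma \ref{Coercivity of energy} to show every profile is strictly subcritical if there are at least two nontrivial ones, invoke Theorem \ref{Below the Talenti} and the stability theorem to get a finite $S$-norm contradiction, rule out $|t_n^1|\to\infty$ by Strichartz/dispersive decay, and conclude $J^\ast=1$ with vanishing remainder. That part is sound. However, there are two genuine gaps in the remainder of your argument.

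First, you jump from ``precompactness along every sequence'' to ``there exists $\lambda(t):[0,T_{\max})\to(0,\infty)$ such that $\{u(t)_{[\lambda(t)]}\}$ is precompact'' by simply setting $\lambda(t_n):=\lambda_n^1$. This is not automatic: for a fixed $t$ sitting in two different subsequences the profile-decomposition scales need not agree, and precompactness of the whole orbit requires a \emph{coherent} choice of $\lambda(t)$ for all $t$, not only along sequences. The paper addresses this in a separate step (Lemma \ref{Precompact of critical solution without global existence}): it defines $\lambda(t)$ intrinsically as the level-set scale $\lambda(t):=\sup\{\lambda>0:\|u(t)\|_{\dot H_\gamma^1(|x|\le 1/\lambda)}^2=E_0[W_0]\}$, and then shows by a compactness/contradiction argument that this $\lambda(\tau_n)$ is comparable to the profile scale $\lambda_n$, so the sequential precompactness transfers to $\{u(t)_{[\lambda(t)]}:t\in[0,T_{\max})\}$. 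You need an analogous construction.

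Second, your global-existence step ``the precompact profile concentrating at shrinking scale would violate the local smoothing bound of Lemma \ref{Cor of local smoothing}'' is not a valid argument: Lemma \ref{Cor of local smoothing} is a spacetime estimate for the \emph{linear} propagator $e^{it\Delta_\gamma}$ and is used in the stability bootstrap (to control $u_n^{\le J}\nabla e^{it\Delta_\gamma}R_n^J$); it says nothing about finite-time blow-up of the nonlinear flow. The paper's actual argument (Lemma \ref{Global existence}) is different: assuming $T_{\max}<\infty$, it first shows $\lambda(T)\to\infty$ as $T\to T_{\max}$ (concentration), then uses Hardy's inequality and precompactness to deduce that the localized mass $I_{\mathscr Y_R}(T)\to0$, and finally combines this with the Lipschitz bound $|I_{\mathscr Y_R}(t)-I_{\mathscr Y_R}(T)|\lesssim |t-T|$ from the localized virial identity to conclude $u\equiv0$, contradicting $E_\gamma[u_0]=E_0[W_0]>0$. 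You should replace the local-smoothing claim with this localized-virial/mass argument.
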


We split Proposition \ref{Precompact of critical solution} into Lemmas \ref{Precompact of critical solution along a sequence}, \ref{Precompact of critical solution without global existence}, and \ref{Global existence}.

\begin{lemma}\label{Precompact of critical solution along a sequence}
Let $\gamma > 0$ and $u_0 \in BW_+$.
Suppose that the solution $u$ to \eqref{NLS} with \eqref{IC} satisfies $\|u\|_{S([0,T_{\max}))} = \infty$.
Then, for each $\{\tau_n\} \subset [0,T_{\max})$, there exists $\{\lambda_n\} \subset (0,\infty)$ such that $\{u(\tau_n)_{[\lambda_n]}\} \subset \dot{H}^1(\R^d)$ is precompact.
\end{lemma}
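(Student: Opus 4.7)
The plan is the standard concentration--compactness argument à la Kenig--Merle. By Lemma \ref{Invariant sets} every $u(\tau_n)$ lies in $BW_+$, so by energy conservation and Lemma \ref{Coercivity of energy} the sequence $\{u(\tau_n)\}$ is bounded in $\dot{H}_\gamma^1(\R^d)$. Apply Lemma \ref{Linear profile decomposition} to this bounded radial sequence (passing to a subsequence) to obtain profiles $\{\phi^j\}$, parameters $\{(t_n^j, \lambda_n^j)\}$, and remainders $\{R_n^J\}$. The goal is to show $J^\ast = 1$, $t_n^1 \equiv 0$, and $\|R_n^1\|_{\dot{H}_\gamma^1} \to 0$. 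Setting $\lambda_n := (\lambda_n^1)^{-1}$, this will immediately yield $u(\tau_n)_{[\lambda_n]} \to \phi^1$ in $\dot{H}^1$, using scale-invariance of $\|\cdot\|_{\dot{H}^1}$ together with its equivalence to $\|\cdot\|_{\dot{H}_\gamma^1}$ for $\gamma>0$.

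The core step is the nonlinear profile superposition. From \eqref{134}, \eqref{136}, and $u(\tau_n)\in BW_+$, in the limit $\sum_j \lim_n \|\phi_n^j\|_{\dot{H}_\gamma^1}^2 + \lim_n \|R_n^J\|_{\dot{H}_\gamma^1}^2 \leq \|W_0\|_{\dot{H}^1}^2$ and $\sum_j \lim_n E_\gamma[\phi_n^j] + \lim_n E_\gamma[R_n^J] = E_0[W_0]$, with all summands in the energy equality being non-negative by Lemma \ref{Coercivity of energy}. Argue by contradiction: if $J^\ast\geq 2$, or $J^\ast = 1$ with $\|R_n^1\|_{\dot{H}_\gamma^1}\not\to 0$, then each $\phi^j$ satisfies $\|\phi^j\|_{\dot{H}_\gamma^1} < \|W_0\|_{\dot{H}^1}$ and $\lim_n E_\gamma[\phi_n^j] < E_0[W_0]$ strictly. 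For each $j$, define a nonlinear profile $v^j$: if $t_n^j \equiv 0$, let $v^j$ solve \eqref{NLS} with $v^j(0) = \phi^j$; if $t_n^j \to \pm\infty$, construct $v^j$ via Proposition \ref{Existence of a wave operator} so that $v^j$ is asymptotic to $e^{it\Delta_\gamma}\phi^j$. By Theorem \ref{Below the Talenti} each $v^j$ is global and satisfies $\|v^j\|_{S(\R)} < \infty$. Form the approximate solution $u_{\mathrm{app}}^J(t) := \sum_{j=1}^J \bigl(v^j\bigl((\lambda_n^j)^{-2}(t-\tau_n) + t_n^j\bigr)\bigr)_{[\lambda_n^j]} + e^{i(t-\tau_n)\Delta_\gamma} R_n^J$, bound $\|u_{\mathrm{app}}^J\|_{S(\R)}$ uniformly in $J,n$ using the orthogonality \eqref{110} to make cross-interaction terms vanish, estimate the error in $N^0$ via \eqref{115} and orthogonality, and apply Theorem \ref{Stability} to deduce that $\|u\|_{S([\tau_n,T_{\max}))}$ is uniformly bounded. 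This contradicts $\|u\|_{S([0,T_{\max}))} = \infty$.

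With $J^\ast = 1$ and $\|R_n^1\|_{\dot{H}_\gamma^1}\to 0$ in hand, it remains to exclude $t_n^1 \to \pm\infty$. If $t_n^1\to +\infty$, then Theorem \ref{Strichartz estimates} combined with dominated convergence gives $\|e^{i(t-\tau_n)\Delta_\gamma} u(\tau_n)\|_{\dot{X}_\gamma^1([\tau_n,\infty))}\to 0$, so Theorem \ref{Small data global existence} yields $\|u\|_{S([\tau_n,T_{\max}))}<\infty$, again contradicting the blow-up of the Strichartz norm; the case $t_n^1\to-\infty$ is analogous using the flow on $[0,\tau_n]$. Hence $t_n^1\equiv 0$, giving $u(\tau_n) = (\phi^1)_{[\lambda_n^1]} + R_n^1$ with $R_n^1\to 0$ in $\dot{H}_\gamma^1$, and precompactness of $\{u(\tau_n)_{[\lambda_n]}\}$ in $\dot{H}^1$ follows as described.

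The main obstacle is the nonlinear-profile superposition step: one must define each $v^j$ according to the limit behaviour of $t_n^j$, verify that the orthogonality \eqref{110} is strong enough to kill the cross-terms in the Strichartz spaces attached to the norm $\dot{X}_\gamma^1$, and ensure that the constants in Theorem \ref{Stability} can be absorbed uniformly in $J$ before letting $J\to\infty$. The restriction $3\leq d\leq 6$ in Theorem \ref{Below the Talenti} is precisely what forces this same restriction in the scattering half of Theorem \ref{Main theorem}.
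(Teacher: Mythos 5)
Your proposal is correct and follows essentially the same strategy as the paper: apply the radial linear profile decomposition, use the energy and gradient Pythagorean expansions together with coercivity (Lemma \ref{Coercivity of energy}) to place each profile strictly below the threshold when there is more than one profile or a non-vanishing remainder, invoke Theorem \ref{Below the Talenti} to build scattering nonlinear profiles, assemble the approximate solution, control it via orthogonality \eqref{110}, \eqref{115}, and Lemma \ref{Cor of local smoothing}, and close with the stability theorem to contradict $\|u\|_{S([0,T_{\max}))} = \infty$; then rule out $t_n^1 \to \pm\infty$ via the small-data theory. The only superficial difference is organizational — the paper separates the steps as $J^\ast = 0$, then $J^\ast \geq 2$, then $t_n^1 \equiv 0$ and energy concentration of the single profile, whereas you fold the ``$J^\ast = 1$ with non-vanishing remainder'' case into the same contradiction that rules out $J^\ast \geq 2$ — but the underlying argument is identical.
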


\begin{proof}
Take any sequence $\{\tau_n\} \subset [0,T_{\max})$.
Applying Lemma \ref{Linear profile decomposition} to $\{u_n(0) := u(\tau_n)\}$, after passing to a subsequence, there exist $J^\ast \in \{0,1,\ldots,\infty\}$, profiles $\{\phi^j\}_{j=0}^{J^\ast} \subset \dot{H}^1(\R^d)$, $\{(\lambda_n^j,t_n^j)\}_{j=1}^{J^\ast} \subset (0,\infty) \times \R$, and $\{R_n^j\}_{j=1}^{J^\ast} \subset \dot{H}^1(\R^d)$ such that
\begin{align*}
	u_n(0)
		= \sum_{j=1}^J\phi_n^j + R_n^J
\end{align*}
for each $0 \leq J \leq J^\ast$ and $J \in \N$, where $\phi_n^j := (e^{it_n^j\Delta_\gamma}\phi^j)_{[\lambda_n^j]}$.

\textbf{(Step 1).}
We exclude $J^\ast = 0$.

If $J^\ast = 0$, then $u(\tau_n) = R_n^0$.
Since $\|e^{i(t-\tau_n)\Delta_\gamma}u(\tau_n)\|_{S(\R)} = \|e^{it\Delta_\gamma}R_n^0\|_{S(\R)} \longrightarrow 0$ as $n \rightarrow \infty$, $\|u\|_{S(\R)} < \infty$ for sufficiently large $n$ from Lemma \ref{Small data global existence}.

\textbf{(Step 2).}
We exclude $J^\ast \geq 2$.

It follows from \eqref{134} and Lemma \ref{Invariant sets} that
\begin{gather}
	\|\phi_n^j\|_{\dot{H}_\gamma^1}^2
		= \|\phi^j\|_{\dot{H}_\gamma^1}^2
		< \sum_{j = 1}^J\|\phi^j\|_{\dot{H}_\gamma^1}^2
		\leq \limsup_{n \rightarrow \infty}\|u(\tau_n)\|_{\dot{H}_\gamma^1}^2
		\leq \|W_0\|_{\dot{H}^1}^2, \label{137} \\
	\limsup_{n \rightarrow \infty}\|R_n^J\|_{\dot{H}_\gamma^1}^2
		< \limsup_{n \rightarrow \infty}\|u(\tau_n)\|_{\dot{H}_\gamma^1}^2
		\leq \|W_0\|_{\dot{H}^1}^2 \label{138}
\end{gather}
for each $1 \leq j \leq J$ and hence, Lemma \ref{Coercivity of energy} gives us
\begin{align*}
	0
		< \frac{1}{d}\|\phi_n^j\|_{\dot{H}_\gamma^1}^2
		\leq E_\gamma[\phi_n^j]
	\ \text{ and }\ 
	0
		\leq \frac{1}{d}\limsup_{n\rightarrow \infty}\|R_n^J\|_{\dot{H}_\gamma^1}^2
		\leq \limsup_{n\rightarrow \infty}E_\gamma[R_n^J],
\end{align*}
which combined with \eqref{136} implies that
\begin{align*}
	\sup_{j}\limsup_{n\rightarrow \infty}E_\gamma[\phi_n^j]
		< E_\gamma[u(\tau_n)]
		= E_0[W_0].
\end{align*}
Take a small positive constant $\delta$ satisfying
\begin{align}
	\limsup_{n\rightarrow \infty}E_\gamma[\phi_n^j]
		\leq E_0[W_0] - \delta \label{108}
\end{align}
for any $1 \leq j \leq J$.
We set a solution $u^j : I^j \times \R^d \longrightarrow \C$ to \eqref{NLS}, which has a initial data $\phi^j$ when $t_n^j \equiv 0$ and satisfies $\|u^j(t) - e^{it\Delta_\gamma}\phi^j\|_{\dot{H}^1} \longrightarrow 0$ as $t \rightarrow \pm \infty$ when $t_n^j \rightarrow \pm \infty$, where $I^j$ denotes the maximal life-span of $u^j$.
Existence of $u^j$ is assured by Theorem \ref{Local theory} and Lemma \ref{Existence of a wave operator}.
Moreover, we set $u_n^j(t,x) := u^j((\lambda_n^j)^{-2}t + t_n^j,x)_{[\lambda_n^j]}$ defined on $I_n^j \times \R^d$.
We note $I_n^j := \{t \in \R : (\lambda_n^j)^{-2}t + t_n^j \in I^j\}$.
By the construction, we have
\begin{align}
	\lim_{n \rightarrow \infty}\|u_n^j(0) - \phi_n^j\|_{\dot{H}^1}
		= 0 \label{107}
\end{align}
in both cases of $t_n^j \equiv 0$ and $t_n^j \longrightarrow \pm\infty$ as $n \rightarrow \infty$.
It follows from \eqref{137}, \eqref{108}, and \eqref{107} that
\begin{align*}
	E_\gamma[u_n^j(0)]
		< E_0[W_0]
	\ \text{ and }\ 
	\|u_n^j(0)\|_{\dot{H}_\gamma^1}
		< \|W_0\|_{\dot{H}^1}
\end{align*}
for sufficiently large $n \in \N$.
Theorem \ref{Below the Talenti} deduces that $u_n^j$ scatters for sufficiently large $n \in \N$.
Applying Theorem \ref{Strichartz estimates}, we have $\|u_n^j\|_{\dot{X}^1(\R)} < \infty$.
Here, we take a function $\psi_\varepsilon^j \in C_c^\infty(\R \times \R^d)$ satisfying
\begin{align*}
	\|u^j - \psi_\varepsilon^j\|_{\dot{X}^1(\R)}
		< \varepsilon
\end{align*}
by the density of $C_c^\infty(\R \times \R^d) \subset \dot{X}^1(\R)$.
From the change of variable, we get
\begin{align}
	\|u_n^j - T_n^j\psi_\varepsilon^j\|_{\dot{X}^1(\R)}
		< \varepsilon, \label{109}
\end{align}
where $T_n^j\psi_\varepsilon^j(t,x) := \psi_\varepsilon^j((\lambda_n^j)^{-2}t + t_n^j,x)_{[\lambda_n^j]}$.
From now on, we discuss two properties of $u_n^j$ :

\textbf{(Claim 1). Boundedness of $u_n^j$ :}
There exists $J_0 \in N$ such that
\begin{align}\label{139}
	\|u_n^j\|_{\dot{X}^1(\R)}
		\lesssim \|\phi^j\|_{\dot{H}_\gamma^1}
		\ \text{ for any }\ J \geq J_0.
\end{align}
It follows from \eqref{137} that $\lim_{j \rightarrow \infty}\|\phi^j\|_{\dot{H}_\gamma^1} = 0$.
Together with \eqref{107}, there exists $J_0 \in \N$ such that $\|u_n^j\|_{\dot{X}^1(\R)} \lesssim \|u_n^j(0)\|_{\dot{H}_\gamma^1} \lesssim \|\phi^j\|_{\dot{H}_\gamma^1}$ for any $j \geq J_0$ from Theorem \ref{Small data global existence}.

\textbf{(Claim 2). Orthogonality of $u_n^j$ and $u_n^k$ $(j \neq k)$} :
\begin{align}
	\|\nabla u_n^j \nabla u_n^k\|_{L_t^\frac{d+2}{d-2}L_x^\frac{d(d+2)}{d^2+4}} + \|u_n^j u_n^k\|_{L_{t,x}^\frac{d+2}{d-2}} + \|\nabla u_n^j u_n^k\|_{L_t^\frac{d+2}{d-2}L_x^\frac{d(d+2)}{d^2-d+2}}
		\longrightarrow 0 \label{112}
\end{align}
as $n \rightarrow \infty$.

\eqref{109} teaches us
\begin{align*}
	& \|u_n^j u_n^k\|_{L_{t,x}^\frac{d+2}{d-2}} \\
		& \leq \|u_n^j - T_n^j\psi_\varepsilon^j\|_{S(\R)}\|u_n^k\|_{S(\R)} + \|T_n^j\psi_\varepsilon^j\|_{S(\R)}\|u_n^k - T_n^k\psi_\varepsilon^k\|_{S(\R)} + \|T_n^j\psi_\varepsilon^j T_n^k\psi_\varepsilon^k\|_{L_{t,x}^\frac{d+2}{d-2}} \\
		& = \|u^j - \psi_\varepsilon^j\|_{S(\R)}\|u^k\|_{S(\R)} + \|\psi_\varepsilon^j\|_{S(\R)}\|u^k - T^k\psi_\varepsilon^k\|_{S(\R)} + \|T_n^j\psi_\varepsilon^j T_n^k\psi_\varepsilon^k\|_{L_{t,x}^\frac{d+2}{d-2}} \\
		& \lesssim \varepsilon + \|T_n^j\psi_\varepsilon^j T_n^k\psi_\varepsilon^k\|_{L_{t,x}^\frac{d+2}{d-2}}.
\end{align*}
So, we estimate the last term.
If $\frac{\lambda_n^j}{\lambda_n^k} + \frac{\lambda_n^k}{\lambda_n^j} \longrightarrow \infty$ as $n \rightarrow \infty$, we have
\begin{align*}
	\|T_n^j\psi_\varepsilon^j T_n^k\psi_\varepsilon^k\|_{L_{t,x}^\frac{d+2}{d-2}}
		& \leq \min\bigl\{\|(T_n^k)^{-1}T_n^j\psi_\varepsilon^j\|_{L_{t,x}^\frac{d+2}{d-2}}\|\psi_\varepsilon^k\|_{L_{t,x}^\infty}, \|\psi_\varepsilon^j\|_{L_{t,x}^\infty}\|(T_n^j)^{-1}T_n^k\psi_\varepsilon^k\|_{L_{t,x}^\frac{d+2}{d-2}}\bigr\} \\
		& \lesssim \min\left\{\(\frac{\lambda_n^k}{\lambda_n^j}\)^\frac{d-2}{2},\(\frac{\lambda_n^j}{\lambda_n^k}\)^\frac{d-2}{2}\right\} \\
		& \longrightarrow 0
\end{align*}
as $n \rightarrow \infty$.
Next, we assume that $\frac{\lambda_n^k}{\lambda_n^j} \longrightarrow \lambda_0 \in (0,\infty)$ and $\frac{|t_n^j(\lambda_n^j)^2 - t_n^k(\lambda_n^k)^2|}{\lambda_n^j\lambda_n^k} \longrightarrow \infty$ as $n \rightarrow \infty$.
Then, we have
\begin{align*}
	\|T_n^j\psi_\varepsilon^j T_n^k\psi_\varepsilon^k\|_{L_{t,x}^\frac{d+2}{d-2}}
		& = \(\frac{\lambda_n^k}{\lambda_n^j}\)^\frac{d-2}{2}\left\|\psi_\varepsilon^j\(\(\frac{\lambda_n^k}{\lambda_n^j}\)^2t + \frac{\lambda_n^k}{\lambda_n^j} \cdot \frac{t_n^j(\lambda_n^j)^2 - t_n^k(\lambda_n^k)^2}{\lambda_n^j\lambda_n^k}, \frac{\lambda_n^k}{\lambda_n^j}x\)\psi_\varepsilon^k\right\|_{L_{t,x}^\frac{d+2}{d-2}} \\
		& \longrightarrow 0 
\end{align*}
as $n \rightarrow \infty$.
Other terms go to zero as $n \rightarrow \infty$ from the similar argument.

We define a function $u_n^{\leq J}$ as
\begin{align*}
	u_n^{\leq J}(t,x)
		:= \sum_{j=1}^J u_n^j(t,x) + e^{it\Delta_\gamma}R_n^J(x).
\end{align*}
We show three conditions to check that $u_n^{\leq J}$ is an approximate solution to \eqref{NLS} in the sense of Lemma \ref{Stability}.

\textbf{(Claim 3).}
\begin{align*}
	\lim_{n \rightarrow \infty}\|u_n^{\leq J}(0) - u_n(0)\|_{\dot{H}_\gamma^1}
		= 0\text{ for any }J.
\end{align*}

It follows from \eqref{107} that
\begin{align*}
	\|u_n^{\leq J}(0) - u_n(0)\|_{\dot{H}_\gamma^1}
		\leq \sum_{j=1}^J \|u_n^j(0) - \phi_n^j\|_{\dot{H}_\gamma^1}
		\longrightarrow 0
\end{align*}
as $n \rightarrow \infty$.

\textbf{(Claim 4).}
\begin{align}
	\sup_{J}\limsup_{n \rightarrow \infty}\|u_n^{\leq J}\|_{\dot{X}^1(\R)}
		\lesssim 1. \label{140}
\end{align}
From \eqref{138}, it suffices to prove
\begin{align*}
	\sup_{J}\limsup_{n \rightarrow \infty}\Bigl\|\sum_{j=1}^J u_n^j\Bigr\|_{\dot{X}^1(\R)}
		\lesssim 1.
\end{align*}
It follows from \eqref{139} and \eqref{112} that
\begin{align}
	\Bigl\|\sum_{j=1}^J u_n^j\Bigr\|_{\dot{X}^1(\R)}^2
		& = \Bigl\|\Bigl(\sum_{j=1}^J \nabla u_n^j\Bigr)^2\Bigr\|_{L_t^\frac{d+2}{d-2}L_x^\frac{d(d+2)}{d^2+4}} \notag \\
		& \lesssim \sum_{j=1}^{J_0}\|u_n^j\|_{\dot{X}^1(\R)}^2 + \sum_{j=J_0+1}^{J}\|u_n^j\|_{\dot{X}^1(\R)}^2 + C(J)\sum_{1\leq j \neq k \leq J}\|\nabla u_n^j \nabla u_n^k\|_{L_t^\frac{d+2}{d-2}L_x^\frac{d(d+2)}{d^2+4}} \notag \\
		& \lesssim \sum_{j=1}^{J_0}\|u_n^j\|_{\dot{X}^1(\R)}^2 + \sum_{j=J_0+1}^J \|\phi^j\|_{\dot{H}_\gamma^1}^2 + o_n(1)
		\lesssim 1. \label{141}
\end{align}

\textbf{(Claim 5).}
\begin{align*}
	\lim_{J \rightarrow \infty}\limsup_{n \rightarrow \infty}\|(i\partial_t + \Delta_\gamma)u_n^{\leq J} + |u_n^{\leq J}|^\frac{4}{d-2}u_n^{\leq J}\|_{\dot{N}^1(\R)}
		= 0.
\end{align*}
We define $F(z) = - |z|^\frac{4}{d-2}z$.
Then, we have
\begin{align*}
	(i\partial_t + \Delta_\gamma)u_n^{\leq J} - F(u_n^{\leq J})
		& = \sum_{j=1}^J F(u_n^j) - F(u_n^{\leq J}) \\
		& = \sum_{j=1}^J F(u_n^j) - F\Bigl(\sum_{j=1}^Ju_n^j\Bigr) + F\Bigl(\sum_{j=1}^Ju_n^j\Bigr) - F(u_n^{\leq J}).
\end{align*}
Thus, it suffices to show that
\begin{align}
	\lim_{J \rightarrow \infty}\limsup_{n \rightarrow \infty}\Bigl\|\sum_{j=1}^J F(u_n^j) - F\Bigl(\sum_{j=1}^Ju_n^j\Bigr)\Bigr\|_{\dot{N}^1(\R)}
		= 0 \label{113}
\end{align}
and
\begin{align}
	\lim_{J \rightarrow \infty}\limsup_{n \rightarrow \infty}\Bigl\|F\Bigl(\sum_{j=1}^Ju_n^j\Bigr) - F(u_n^{\leq J})\Bigr\|_{\dot{N}^1(\R)}
		= 0. \label{114}
\end{align}
First, we see \eqref{113}.
We note that
\begin{align*}
	\Bigl|\nabla \Bigl[\sum_{j=1}^J F(u_n^j) - F\Bigl(\sum_{j=1}^Ju_n^j\Bigr)\Bigr]\Bigr|
		\lesssim_J \sum_{1 \leq j \neq k \leq J}|u_n^k|^\frac{4}{d-2}|\nabla u_n^j|.
\end{align*}
Applying Theorem \ref{Strichartz estimates} and \eqref{112}, we have
\begin{align*}
	\Bigl\|\sum_{j=1}^J F(u_n^j) - F\Bigl(\sum_{j=1}^Ju_n^j\Bigr)\Bigr\|_{\dot{N}^1(\R)}
		& \lesssim \sum_{1 \leq j \neq k \leq J}\||u_n^k|^\frac{4}{d-2}|\nabla u_n^j|\|_{L_t^2L_x^\frac{2d}{d+2}} \\
		& \lesssim \sum_{1 \leq j \neq k \leq J}\|u_n^k\|_{S(\R)}^\frac{6-d}{d-2}\|u_n^k\nabla u_n^j\|_{L_t^\frac{d+2}{d-2}L_x^\frac{d(d+2)}{d^2-d+2}}
		\longrightarrow 0
\end{align*}
as $n \rightarrow \infty$.
Next, we see \eqref{114}.
We note that
\begin{align*}
	& |\nabla(F(u_n^{\leq J} - e^{it\Delta_\gamma}R_n^J) - F(u_n^{\leq J}))| \\
		& \lesssim_J (|u_n^{\leq J}|^\frac{4}{d-2} + |e^{it\Delta_\gamma}R_n^J|^\frac{4}{d-2})|\nabla e^{it\Delta_\gamma}R_n^J| + (|e^{it\Delta_\gamma}R_n^J|^\frac{4}{d-2} + |e^{it\Delta_\gamma}R_n^J||u_n^{\leq J}|^\frac{6-d}{d-2})|\nabla u_n^{\leq J}|.
\end{align*}
and hence, we have
\begin{align*}
	& \|F(u_n^{\leq J} - e^{it\Delta_\gamma}R_n^J) - F(u_n^{\leq J})\|_{\dot{N}^1(\R)} \\
		& \hspace{1.0cm} \lesssim \|u_n^{\leq J}\|_{\dot{X}^1(\R)}(\|u_n^{\leq J}\|_{S(\R)}^\frac{6-d}{d-2}\|e^{it\Delta_\gamma}R_n^J\|_{S(\R)} + \|e^{it\Delta_\gamma}R_n^J\|_{S(\R)}^\frac{4}{d-2}) \\
		& \hspace{2.0cm} + \|e^{it\Delta_\gamma}R_n^J\|_{\dot{X}^1(\R)}\|e^{it\Delta_\gamma}R_n^J\|_{S(\R)}^\frac{4}{d-2} + \|u_n^{\leq J}\|_{S(\R)}^\frac{6-d}{d-2}\|u_n^{\leq J}\nabla e^{it\Delta_\gamma}R_n^J\|_{L_t^\frac{d+2}{d-2}L_x^\frac{d(d+2)}{d^2-d+2}}.
\end{align*}
From Theorem \ref{Strichartz estimates}, \eqref{115}, \eqref{112}, and \eqref{140}, it suffices to prove
\begin{align*}
	& \lim_{J\rightarrow \infty}\limsup_{n \rightarrow \infty}\|u_n^{\leq J}\nabla e^{it\Delta_\gamma}R_n^J\|_{L_t^\frac{d+2}{d-2}L_x^\frac{d(d+2)}{d^2-d+2}} \\
		& = \lim_{J\rightarrow \infty}\limsup_{n \rightarrow \infty}\Bigl\|\Bigl(\sum_{j=1}^J u_n^j + e^{it\Delta_\gamma}R_n^J\Bigr)\nabla e^{it\Delta_\gamma}R_n^J\Bigr\|_{L_t^\frac{d+2}{d-2}L_x^\frac{d(d+2)}{d^2-d+2}} \\
		& \leq \lim_{J\rightarrow \infty}\limsup_{n \rightarrow \infty}\Bigl\{\Bigl\|\Bigl(\sum_{j=1}^J u_n^j\Bigr)\nabla e^{it\Delta_\gamma}R_n^J\Bigr\|_{L_t^\frac{d+2}{d-2}L_x^\frac{d(d+2)}{d^2-d+2}} + \|e^{it\Delta_\gamma}R_n^J\nabla e^{it\Delta_\gamma}R_n^J\|_{L_t^\frac{d+2}{d-2}L_x^\frac{d(d+2)}{d^2-d+2}}\Bigr\}
		= 0.
\end{align*}
The second term is estimated as
\begin{align*}
	\|e^{it\Delta_\gamma}R_n^J\nabla e^{it\Delta_\gamma}R_n^J\|_{L_t^\frac{d+2}{d-2}L_x^\frac{d(d+2)}{d^2-d+2}}
		& \lesssim \|e^{it\Delta_\gamma}R_n^J\|_{\dot{X}^1(\R)}\|\nabla e^{it\Delta_\gamma}R_n^J\|_{S(\R)} \\
		& \lesssim \|R_n^J\|_{\dot{H}^1}\|\nabla e^{it\Delta_\gamma}R_n^J\|_{S(\R)}
		\longrightarrow 0
\end{align*}
as $n \rightarrow \infty$.
Since the argument of \eqref{141} teaches us that given $\eta > 0$, there exists $J_1 = J_1(\eta)$ such that
\begin{align*}
	\sup_{J \geq J_1+1}\limsup_{n \rightarrow \infty}\Bigl\|\sum_{j=J_1+1}^J u_n^j\Bigr\|_{\dot{X}^1(\R)}
		< \eta,
\end{align*}
we show that
\begin{align*}
	\limsup_{n\rightarrow \infty}\|u_n^j\nabla e^{it\Delta_\gamma}R_n^J\|_{L_t^\frac{d+2}{d-2}L_x^\frac{d(d+2)}{d^2-d+2}}
		= 0
\end{align*}
for each $1 \leq j \leq J_1$.
Suppose that $\psi_\varepsilon^j$ has a support on $[-T,T] \times \{|x| \leq R\}$, where the approximate function $\psi_\varepsilon^j \in C_c^\infty(\R \times \R^d)$ is given as \eqref{109}.
Then, we have
\begin{align*}
	& \|u_n^j\nabla e^{it\Delta_\gamma}R_n^J\|_{L_t^\frac{d+2}{d-2}L_x^\frac{d(d+2)}{d^2-d+2}} \\
		& \hspace{1.0cm} \lesssim \|u^j - \psi_\varepsilon^j\|_{S(\R)}\|e^{it\Delta_\gamma}R_n^J\|_{\dot{X}^1(\R)} + \|T_n^j\psi_\varepsilon^j \nabla e^{it\Delta_\gamma}R_n^J\|_{L_t^\frac{d+2}{d-2}L_x^\frac{d(d+2)}{d^2-d+2}} \\
		& \hspace{1.0cm} \lesssim \|u^j - \psi_\varepsilon^j\|_{S(\R)}\|e^{it\Delta_\gamma}R_n^J\|_{\dot{X}^1(\R)} \\
		& \hspace{2.0cm} + (\lambda_n^j)^{-\frac{d}{2}+1}\|\psi_\varepsilon^j\|_{L_{t,x}^\infty}\|\nabla e^{it\Delta_\gamma}R_n^J\|_{L_t^\frac{d+2}{d-2}(B_{(\lambda_n^j)^2T}(-\lambda_n^jt_n^j);L_x^\frac{d(d+2)}{d^2-d+2}(B_{\lambda_n^jR}(0)))} \\
		& \hspace{1.0cm} \lesssim \varepsilon + T^\frac{(d-2)^2}{2(d+2)^2}R^\frac{d^3+4d-16}{2(d+2)^2}\|e^{it\Delta_\gamma}R_n^J\|_{S(\R)}^\frac{d-2}{d+2}\|R_n^J\|_{\dot{H}^1}^\frac{4}{d+2} \\
		& \hspace{2.0cm} + T^\frac{(d-2)^2}{4(d+2)^2}R^\frac{d^3+d^2-12}{2(d+2)^2}\|e^{it\Delta_\gamma}R_n^J\|_{S(\R)}^\frac{d-2}{2(d+2)}\|R_n^J\|_{\dot{H}^1}^\frac{d+6}{2(d+2)}
\end{align*}
from Lemma \ref{Cor of local smoothing}, which implies the desired result.

\textbf{(Step 3).}
We prove convergence.

We have already seen $J^\ast = 1$ in (Step 1) and (Step 2).

We prove $t_n^1 \equiv 0$.
If $t_n^1 \longrightarrow \infty$ as $n \rightarrow \infty$, then
\begin{align*}
	\|e^{it\Delta_\gamma}u_n(0)\|_{S(t \geq 0)}
		\leq \|e^{it\Delta_\gamma}\phi^1\|_{S(t \geq t_n^1)} + \|e^{it\Delta_\gamma}R_n^1\|_{S(t \geq 0)}
		\longrightarrow 0
\end{align*}
as $n \rightarrow \infty$ and hence, we get a contradiction from Lemma \ref{Small data global existence}.
The case of $t_n^1 \longrightarrow - \infty$ as $n \rightarrow \infty$ is excluded by the similar argument.
Thus, we see $t_n^1 \equiv 0$.

We prove $\lim_{n \rightarrow \infty}E_\gamma[\phi_n^1] = E_\gamma[\phi^1] = E_\gamma[u(\tau_n)] (= E_0[W_0])$.
If not, then $(\|u\|_{S(0,\infty)} =) \|u_n\|_{S(\tau_n,\infty)} < \infty$ for sufficiently large $n \in \N$.

Combining this limit and Lemma \ref{Coercivity of energy}, we have
\begin{align*}
	0
		= \lim_{n\rightarrow \infty}\{E_\gamma[\phi_n^1] - E_0[W_0]\}
		= \lim_{n \rightarrow \infty}E_\gamma[R_n^1]
		\gtrsim \lim_{n \rightarrow \infty}\|R_n^1\|_{\dot{H}_\gamma^1}
		\geq 0.
\end{align*}
That is, $\lim_{n\rightarrow\infty}\|R_n^1\|_{\dot{H}_\gamma^1} = 0$ holds and hence, we get
\begin{align*}
	\lim_{n \rightarrow \infty}\|\{u(\tau_n)\}_{[\lambda_n^{-1}]} - \phi^1\|_{\dot{H}\gamma^1}
		 = \lim_{n \rightarrow \infty}\|R_n^1\|_{\dot{H}_\gamma^1}
		 = 0.
\end{align*}
\end{proof}

\begin{lemma}\label{Precompact of critical solution without global existence}
Let $\gamma > 0$ and $u_0 \in BW_+$.
Suppose that the solution $u$ to \eqref{NLS} with \eqref{IC} satisfies $\|u\|_{S([0,T_{\max}))} = \infty$.
Then, there exists $\lambda(t) : [0,T_{\max}) \longrightarrow (0,\infty)$ such that $\{u(t)_{[\lambda(t)]} : t \in [0,T_{\max})$ is precompact in $\dot{H}^1(\R^d)$.
An analogous result holds for negative time direction.
\end{lemma}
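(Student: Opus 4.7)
The plan is to upgrade the sequential precompactness of Lemma~\ref{Precompact of critical solution along a sequence} to global-in-time precompactness by pinning the scale via a canonical normalization of the spatial $\dot{H}^1$ concentration of $u(t)$. First I would extract a uniform positive lower bound on $\|u(t)\|_{\dot{H}^1}$. For $\gamma>0$, Lemma~\ref{Hardy inequality} gives $\|\cdot\|_{\dot{H}^1}\sim\|\cdot\|_{\dot{H}_\gamma^1}$ on $\dot{H}_\gamma^1(\R^d)$, so from $E_\gamma[u(t)]=E_0[W_0]$ and the energy identity $\tfrac{1}{2}\|u(t)\|_{\dot{H}_\gamma^1}^2=E_0[W_0]+\tfrac{1}{2^\ast}\|u(t)\|_{L^{2^\ast}}^{2^\ast}\geq E_0[W_0]$ one obtains $\|u(t)\|_{\dot{H}^1}\geq c_0>0$ for every $t\in[0,T_{\max})$. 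Fixing $A\in(0,c_0^2)$, the continuous non-decreasing map $r\mapsto\int_{|x|\leq r}|\nabla u(t,x)|^2dx$ runs from $0$ to $\|u(t)\|_{\dot{H}^1}^2>A$, so I would set
\begin{equation*}
\lambda(t):=1\big/\inf\Bigl\{r>0:\int_{|x|\leq r}|\nabla u(t,x)|^2dx\geq A\Bigr\}\in(0,\infty),
\end{equation*}
which, via a direct change of variables, yields the normalization $\int_{|x|\leq 1}|\nabla u(t)_{[\lambda(t)]}(x)|^2dx=A$ for every $t$.

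To verify precompactness of $K:=\{u(t)_{[\lambda(t)]}:t\in[0,T_{\max})\}$ in $\dot{H}^1(\R^d)$, I would take an arbitrary sequence $\{t_n\}\subset[0,T_{\max})$. Lemma~\ref{Precompact of critical solution along a sequence} supplies, after passing to a subsequence, $\{\mu_n\}\subset(0,\infty)$ with $u(t_n)_{[\mu_n]}\to\phi$ in $\dot{H}_\gamma^1$, hence also in $\dot{H}^1$ by norm equivalence. Since $\|u(t_n)_{[\mu_n]}\|_{\dot{H}^1}=\|u(t_n)\|_{\dot{H}^1}\geq c_0$, continuity of the $\dot{H}^1$ norm forces $\|\phi\|_{\dot{H}^1}\geq c_0$, so $\phi\not\equiv 0$. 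Writing $\kappa_n:=\mu_n/\lambda(t_n)$, the composition relation $(f_{[\mu]})_{[\nu]}=f_{[\mu\nu]}$ gives $u(t_n)_{[\lambda(t_n)]}=[u(t_n)_{[\mu_n]}]_{[\kappa_n^{-1}]}$, and a change of variables produces
\begin{equation*}
A=\int_{|x|\leq 1}\bigl|\nabla u(t_n)_{[\lambda(t_n)]}(x)\bigr|^2dx=\int_{|y|\leq\kappa_n}\bigl|\nabla u(t_n)_{[\mu_n]}(y)\bigr|^2dy.
\end{equation*}
The heart of the argument is the dichotomy on $\kappa_n$: if $\kappa_n\to\infty$ along a subsequence, the right side tends to $\|\phi\|_{\dot{H}^1}^2\geq c_0^2>A$ by monotone/dominated convergence, a contradiction; if $\kappa_n\to 0$, the right side is bounded above by $\int_{|y|\leq\kappa_n}|\nabla\phi|^2dy+\|u(t_n)_{[\mu_n]}-\phi\|_{\dot{H}^1}^2\to 0<A$, again a contradiction. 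Hence $\kappa_n$ stays in a compact subset of $(0,\infty)$; a further subsequence extraction gives $\kappa_n\to\kappa_\infty\in(0,\infty)$, and joint continuity of $(\kappa,f)\mapsto f_{[\kappa^{-1}]}$ on $(0,\infty)\times\dot{H}^1$ yields $u(t_n)_{[\lambda(t_n)]}\to\phi_{[\kappa_\infty^{-1}]}$ in $\dot{H}^1$, so $K$ is precompact. The symmetric construction on $(T_{\min},0]$ delivers the negative-time counterpart.

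The main obstacle I anticipate is ensuring nontriviality of the limit profile $\phi$, since this is precisely what converts the sequence-dependent scale $\mu_n$ produced by Lemma~\ref{Precompact of critical solution along a sequence} into a well-defined scale function $\lambda(t)$ through the rigidity of the normalization. This nontriviality is furnished by the energy threshold $E_\gamma[u_0]=E_0[W_0]$ together with the flow-invariance of $BW_+$ (Lemma~\ref{Invariant sets}), which jointly produce the uniform positive lower bound on $\|u(t)\|_{\dot{H}^1}$ used to pick the normalization constant $A$.
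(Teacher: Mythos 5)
Your proof is correct and follows essentially the same approach as the paper: both pin the scale by choosing $\lambda(t)$ so that a scale-covariant, monotone local concentration functional is normalized to a fixed constant (the paper uses $\|u(t)\|_{\dot H_\gamma^1(|x|\le 1/\lambda)}^2 = E_0[W_0]$, you use $\int_{|x|\le r}|\nabla u(t)|^2 = A$), then rule out $\mu_n/\lambda(t_n)\to 0$ or $\infty$ by comparing the normalization constant with the nontrivial limit profile from Lemma~\ref{Precompact of critical solution along a sequence}. The differences (inf of a radius versus sup of a scale, plain $\dot H^1$ restriction versus $\dot H_\gamma^1$ restriction, the particular threshold) are cosmetic and do not change the argument.
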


\begin{proof}
Applying Lemma \ref{Precompact of critical solution along a sequence}, for each $\{\tau_n\} \subset [0,T_{\max})$ there exists $\{\lambda_n\} \subset (0,\infty)$ satisfying that $\{u(\tau_n)_{[\lambda_n]}\}$ is precompact in $\dot{H}^1(\R^d)$.
Passing to a subsequence, we assume that
\begin{align}
	u(\tau_n)_{[\lambda_n]}
		\longrightarrow u_{\infty,0}\ \text{ in }\ \dot{H}^1(\R^d). \label{117}
\end{align}
We note that $E_\gamma[u_{\infty,0}] = E_0[W_0]$, $\|u_{\infty,0}\|_{\dot{H}_\gamma^1} < \|W_0\|_{\dot{H}^1}$, and $\|u_\infty\|_{S([0,T_{\max}))} = \infty$, where $u_\infty : [0,T_{\max}) \times \R^d \longrightarrow \C$ is the solution to \eqref{NLS} with $u_\infty(0) = u_{\infty,0}$.
For each $t \in [0,T_{\max})$, we define
\begin{align*}
	\lambda(t)
		:= \sup\{\lambda \in (0,\infty) : \|u(t)\|_{\dot{H}_\gamma^1(|x| \leq \frac{1}{\lambda})}^2 = E_0[W_0]\}.
\end{align*}
We note that the following two points :
\begin{itemize}
\item
$\{\lambda \in (0,\infty) : \|u(t)\|_{\dot{H}_\gamma^1(|x| \leq \frac{1}{\lambda})}^2 = E_0[W_0]\}$ is not empty since $\|u(t)\|_{\dot{H}_\gamma^1(|x| \leq \frac{1}{\lambda})}^2 \longrightarrow 0$ as $\lambda \rightarrow \infty$ and $\|u(t)\|_{\dot{H}_\gamma^1(|x| \leq \frac{1}{\lambda})}^2 \longrightarrow \|u(t)\|_{\dot{H}_\gamma^1}^2 \geq 2E_\gamma[u] = 2E_0[W_0]$ as $\lambda \rightarrow 0$.
\item
$\|u(t)\|_{\dot{H}_\gamma^1(|x| \leq \frac{1}{\lambda(t)})}^2 = E_0[W_0]$ holds.
\end{itemize}
From the changing of variables, we have
\begin{align*}
	E_0[W_0]
		= \|u(\tau_n)\|_{\dot{H}_\gamma^1(|x| \leq \frac{1}{\lambda(\tau_n)})}^2
		= \|u(\tau_n)_{[\lambda_n]}\|_{\dot{H}_\gamma^1(|x| \leq \frac{\lambda_n}{\lambda(\tau_n)})}^2.
\end{align*}

Here, we claim that there exists $C > 1$ such that
\begin{align}
	C^{-1}\lambda(\tau_n)
		< \lambda_n
		< C\lambda(\tau_n) \label{116}
\end{align}
for any $n \in \N$.
To prove this claim by contradiction, we consider the cases of $\frac{\lambda_n}{\lambda(\tau_n)} \longrightarrow \infty$ as $n \rightarrow \infty$ and $\frac{\lambda_n}{\lambda(\tau_n)} \longrightarrow 0$ as $n \rightarrow \infty$.

If $\frac{\lambda_n}{\lambda(\tau_n)} \longrightarrow \infty$ as $n \rightarrow \infty$, then
\begin{align*}
	E_0[W_0]
		= \lim_{n\rightarrow \infty}\|u(\tau_n)_{[\lambda_n]}\|_{\dot{H}_\gamma^1(|x| \leq \frac{\lambda_n}{\lambda(\tau_n)})}^2
		= \|u_{\infty,0}\|_{\dot{H}_\gamma^1}^2
		\geq 2E_0[W_0].
\end{align*}

If $\frac{\lambda_n}{\lambda(\tau_n)} \longrightarrow 0$ as $n \rightarrow \infty$, then we get
\begin{align*}
	E_0[W_0]
		= \lim_{n\rightarrow \infty}\|u(\tau_n)_{[\lambda_n]}\|_{\dot{H}_\gamma^1(|x| \leq \frac{\lambda_n}{\lambda(\tau_n)})}^2
		= 0.
\end{align*}
However, these contradict $E_0[W_0] > 0$ and hence, \eqref{116} holds.

\eqref{117} and \eqref{116} imply that $u(\tau_n)_{[\lambda(\tau_n)]}$ converges strongly in $\dot{H}_\gamma^1(\R^d)$.
Since the sequence $\{\tau_n\}$ is taken arbitrarily, $\{u(t)_{[\lambda(t)]} : t \in [0,T_{\max})\}$ is precompact in $\dot{H}_\gamma^1(\R^d)$.
\end{proof}

\begin{lemma}\label{Global existence}
Let $\gamma > 0$ and $u_0 \in BW_+$.
Then, the solution $u$ to \eqref{NLS} with \eqref{IC} exists globally in time.
\end{lemma}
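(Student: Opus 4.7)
The plan is to argue by contradiction: suppose $T_{\max} < \infty$. The blow-up criterion in Theorem \ref{Local theory} forces $\|u\|_{S([0,T_{\max}))} = \infty$, so Lemma \ref{Precompact of critical solution without global existence} yields $\lambda : [0, T_{\max}) \to (0, \infty)$ with $\tilde u(t) := u(t)_{[\lambda(t)]}$ having precompact orbit in $\dot{H}^1(\R^d)$.

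First, I show $\lambda(t) \to \infty$ as $t \to T_{\max}$. Along any sequence $t_n \to T_{\max}$, pass to a subsequence with $\lambda(t_n) \to \lambda_\infty \in [0, \infty]$ and $\tilde u(t_n) \to v$ in $\dot H^1$. If $\lambda_\infty \in (0, \infty)$, then $u(t_n) = \tilde u(t_n)_{[1/\lambda(t_n)]}$ converges in $\dot H^1$, so Theorem \ref{Local theory} extends $u$ past $T_{\max}$, contradicting maximality. If $\lambda_\infty = 0$, the scale-invariance of $\dot X^1_\gamma$ gives $\|e^{i(t-t_n)\Delta_\gamma} u(t_n)\|_{\dot X^1_\gamma([t_n, t_n + 1])} \approx \|e^{is\Delta_\gamma} v\|_{\dot X^1_\gamma([0, \lambda(t_n)^2])} \to 0$, and Theorem \ref{Small data global existence} again extends $u$ past $T_{\max}$.

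Next, for $R > 0$ define $M_R(t) := \int_{\R^d} \mathscr{Y}_R(x) |u(t,x)|^2\, dx$ using the cutoff from \eqref{127}. Proposition \ref{Virial identity}, the fact that $|\nabla \mathscr{Y}_R| \lesssim R^{-1}$ is supported on $R \leq |x| \leq 2R$, the Hölder bound $\|u\|_{L^2(R \leq |x| \leq 2R)} \lesssim R\|u\|_{L^{2^*}}$, and Sobolev give
\begin{align*}
	|M_R'(t)| = \Bigl|2\Im\!\int \bar u\, \nabla u \cdot \nabla \mathscr{Y}_R\, dx\Bigr| \lesssim \|u(t)\|_{\dot H^1}^2 \lesssim E_0[W_0],
\end{align*}
so $M_R$ is Lipschitz on $[0, T_{\max})$ with constant independent of $R$. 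To show $M_R(t) \to 0$ as $t \to T_{\max}$ for each fixed $R$, rescale via $u(t, x) = \lambda(t)^{(d-2)/2} \tilde u(t, \lambda(t) x)$ to get $M_R(t) = \lambda(t)^{-2} \int \mathscr{Y}_R(y/\lambda(t)) |\tilde u(t, y)|^2\, dy$. The $\dot H^1$-precompactness of $\{\tilde u(t)\}$ plus Sobolev give $L^{2^*}$-tightness: for any $\eta > 0$, some $R_0 = R_0(\eta)$ ensures $\|\tilde u(t)\|_{L^{2^*}(|y| > R_0)} < \eta$ for all $t$. Splitting the integral at $|y| = R_0$ and applying Hölder with $|B_{2R\lambda(t)}|^{2/d} \lesssim (R\lambda(t))^2$ yields
\begin{align*}
	M_R(t) \lesssim R_0^2\, \lambda(t)^{-2} + R^2\, \eta^2.
\end{align*}
Sending $t \to T_{\max}$ and then $\eta \to 0$ gives the claim.

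Finally, combining the Lipschitz bound with $M_R(t) \to 0$ gives, for every $t_0 \in [0, T_{\max})$ and every $R > 0$,
\begin{align*}
	M_R(t_0) \leq \lim_{t \to T_{\max}} M_R(t) + C(T_{\max} - t_0) = C(T_{\max} - t_0),
\end{align*}
uniformly in $R$. Monotone convergence as $R \to \infty$ yields $\|u(t_0)\|_{L^2}^2 \leq C(T_{\max} - t_0)$; at $t_0 = 0$ this forces $u_0 \in L^2$, so mass conservation applies and $\|u(t_0)\|_{L^2} \equiv \|u_0\|_{L^2}$. Letting $t_0 \to T_{\max}$ then forces $\|u_0\|_{L^2} = 0$, contradicting $E_\gamma[u_0] = E_0[W_0] > 0$. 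A symmetric argument yields $T_{\min} = -\infty$. The main technical hurdle is the decay $M_R(t) \to 0$: it requires a careful two-scale ($R$ versus $R_0$) decomposition relative to the concentration scale $1/\lambda(t)$ while maintaining the $R$-independent Lipschitz constant on $M_R$.
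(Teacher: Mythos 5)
Your proof is correct and follows essentially the same route as the paper's: derive $\|u\|_{S([0,T_{\max}))}=\infty$ from the blow-up criterion, invoke Lemma \ref{Precompact of critical solution without global existence} to get the compact orbit, show $\lambda(t)\to\infty$ as $t\to T_{\max}$, establish the $R$-uniform Lipschitz bound on $I_{\mathscr{Y}_R}$ and its vanishing as $t\to T_{\max}$, and then conclude via finiteness and conservation of mass. The one place where you deviate is the $\lambda_\infty=0$ subcase of Claim~1: the paper flows backward to time $0$ and uses continuous dependence together with the weak convergence $u(0)_{[\lambda(t_n)]}\rightharpoonup 0$ to force $v_0\equiv 0$, whereas you instead exploit the scale-invariance of $\dot X^1_\gamma$ to show $\|e^{i(t-t_n)\Delta_\gamma}u(t_n)\|_{\dot X^1_\gamma([t_n,t_n+1])}\to 0$ and extend past $T_{\max}$ by small-data theory -- a somewhat more self-contained argument that avoids invoking continuity dependence on initial data.
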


\begin{proof}
We assume $T_{\max} < \infty$ for contradiction.
Then, $\|u\|_{S([0,T_{\max}))} = \infty$ holds from Theorem \ref{Local theory}.
We consider
\begin{align*}
	I_{\mathscr{Y}_R}(t)
		= \int_{\R^d}\mathscr{Y}_R(x) |u(t,x)|^2dx,
\end{align*}
where $\mathscr{Y}_R$ is given as \eqref{127}.
Applying Lemmas \ref{Hardy inequality}, \ref{Virial identity}, and \ref{Coercivity of energy}, we have
\begin{align*}
	|I_{\mathscr{Y}_R}'(t)|
		= \frac{2}{R}\left|\text{Im}\int_{R \leq |x| \leq 2R}|x| \cdot \frac{1}{|x|}\overline{u(t,x)}\nabla u(t,x) \cdot (\nabla \mathscr{Y})\(\frac{x}{R}\)dx\right|
		\lesssim \|u\|_{\dot{H}^1}^2
		\lesssim E_\gamma[u_0]
		= E_0[W_0]
\end{align*}
and hence,
\begin{align}
	|I_{\mathscr{Y}_R}(t) - I_{\mathscr{Y}_R}(T)|
		\lesssim E_0[W_0]|t - T| \label{121}
\end{align}
for any $t$, $T \in (0,T_{\max})$ by the mean value theorem.

We split the rest argument into three parts.

\textbf{(Claim 1).}
\begin{align}
	\lim_{T \rightarrow T_{\max}}\lambda(T)
		= \infty. \label{119}
\end{align}
If not, then there exists a sequence $\{t_n\} \subset I$ with $t_n \longrightarrow T_{\max}$ $(n \rightarrow \infty)$ such that $\lambda(t_n) \longrightarrow \lambda_0 \in [0,\infty)$.

Let $\lambda_0 = 0$.
Lemma \ref{Precompact of critical solution without global existence} implies that $v_n(0) \longrightarrow v_0$ in $\dot{H}^1(\R^d)$ as $n \rightarrow \infty$ for some $v_0 \in \dot{H}^1(\R^d)$ along a subsequence, where the solution $v_n(\tau)$ to \eqref{NLS} is defined as $v_n(\tau) := u(t_n + \frac{\tau}{\lambda(t_n)^2})_{[\lambda(t_n)]}$.
Together with conservation of energy and Lemma \ref{Invariant sets}, we have
\begin{align*}
	E_\gamma[v(t)]
		= E_0[W_0]
	\ \text{ and }\ 
	\|v_0\|_{\dot{H}_\gamma^1}
		\leq \|W_0\|_{\dot{H}^1}.
\end{align*}
From the continuity dependence on initial data, we have
\begin{align*}
	u(0)_{[\lambda(t_n)]}
		= v_n(-\lambda(t_n)^2t_n)
		\longrightarrow v_0
\end{align*}
in $\dot{H}^1(\R^d)$ as $n \rightarrow \infty$.
On the other hand, $u(0)_{[\lambda(t_n)]} \xrightharpoonup[]{\hspace{0.4cm}} 0$ weakly in $\dot{H}^1(\R^d)$ as $n \rightarrow \infty$ from $\lambda_0 = 0$.
Thus, we get $v_0 \equiv 0$, which contradicts $E_\gamma[v_0] = E_0[W_0]$.

Let $\lambda_0 \in (0,\infty)$.
Applying Lemma \ref{Precompact of critical solution without global existence}, $u(t_n)_{[\lambda(t_n)]} \longrightarrow u_\infty$ in $\dot{H}^1(\R^d)$ along a subsequence.
Thus, we have $u(t_n) \longrightarrow f_{[\lambda_0^{-1}]}$ in $\dot{H}^1(\R^d)$.
Theorem \ref{Local theory} teaches us that the solution $u$ exists beyond the maximal existence time $T_{\max}$.

\textbf{(Claim 2).}
\begin{align}
	\lim_{T \rightarrow T_{\max}}I_{\mathscr{Y}_R}(T)
		= 0. \label{120}
\end{align}
Take $r_0 \in (0,1)$.
It follows from Lemma \ref{Hardy inequality} and H\"older inequality that
\begin{align}
	I_{\mathscr{Y}_R}(T)
		& = \int_{|x| \leq r_0}\mathscr{Y}_R(x) |u(T,x)|^2dx + \int_{|x| > r_0}\mathscr{Y}_R(x) |u(T,x)|^2dx \notag \\
		& \lesssim r_0^2\|u(T)\|_{\dot{H}^1}^2 + \|\mathscr{Y}_R\|_{L^\frac{d}{2}}\|u(T)\|_{L^{2^\ast}(|x| \geq r_0)}^2 \notag \\
		& \lesssim r_0^2\|u(T)\|_{\dot{H}^1}^2 + R^2\|\nabla [u(T)_{[\lambda(T)]}]\|_{L^2(|x| \geq \lambda(T)r_0)}^2. \label{118}
\end{align}
Combining precompactness of $\{u(t)_{[\lambda(t)]}\}$ and \eqref{119}, we get that
\begin{align*}
	\text{(The second term of \eqref{118})}
		\longrightarrow 0\ \text{ as }\ T \rightarrow T_{\max}
\end{align*}
for each fixed $r_0 > 0$.
Therefore, we obtain \eqref{120} by taking $T \rightarrow T_{\max}$ and $r_0 \rightarrow 0$.

\textbf{(Claim 3).} Conclusion

Taking $T \rightarrow T_{\max}$ and $R \rightarrow \infty$ in \eqref{121}, we have
\begin{align*}
	\int_{\R^d}|u(t,x)|^2dx
		\lesssim E_0[W_0]|t - T_{\max}|
\end{align*}
from \eqref{120}.
Letting $t \rightarrow T_{\max}$, conservation of mass deduces $u(t,x) = 0$ for any $t \in [0,T_{\max})$.
However, this contradicts $E_\gamma[u_0] = E_0[W_0] > 0$.
\end{proof}

\subsection{Extinction of the soliton-like solution}

In this subsection, we get a contradiction by using the soliton-like solution constructed in the former subsection and complete the proof of scattering part in Theorem \ref{Main theorem}.
In this subsection, we consider mainly the soliton-like solution $u$, that is, $u$ satisfies the following :
\begin{itemize}
\item
$u$ exists globally in positive time,
\item
$u(t)$ belongs to $BW_+$ for each $t \in [0,\infty)$,
\item
$\|u\|_{S([0,\infty))} = \infty$ holds,
\item
There exists $\{\lambda(t)\} \subset (0,\infty)$ such that $\{u(t)_{[\lambda(t)]} : t \in [0,\infty)\}$ is precompact in $\dot{H}_\gamma^1(\R^d)$.
\end{itemize}
As a preliminary to get a contradiction, we state the next lemmas (Lemmas \ref{Concentration on the origin} and \ref{Order of lambda}).
These lemmas hold without $\|u\|_{S([0,\infty))} = \infty$.

\begin{lemma}\label{Concentration on the origin}
Let $\gamma > 0$ and $u_0 \in BW_+$.
Suppose that the solution $u$ to \eqref{NLS} with \eqref{IC} satisfies that $\{u(t)_{[\lambda(t)]} : t \in [0,\infty)\}$ is precompact in $\dot{H}^1(\R^d)$.
Then, for any $\varepsilon > 0$, there exists $R_0 = R_0(\varepsilon) > 0$ such that
\begin{align*}
	\|u(t)_{[\lambda(t)]}\|_{\dot{H}_\gamma^1(|x|\geq R)}^2 + \|u(t)_{[\lambda(t)]}\|_{L^{2^\ast}(|x|\geq R)}^{2^\ast}
		\leq \varepsilon
\end{align*}
for any $t \in \R$ and any $R \geq R_0$.
\end{lemma}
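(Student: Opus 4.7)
The plan is to argue by contradiction, using the precompactness hypothesis to reduce a uniform-in-time tail estimate for the scaled orbit to the tail estimate for a single element of $\dot{H}_\gamma^1 \cap L^{2^\ast}$. Suppose the conclusion fails. Then there exist $\varepsilon_0 > 0$ and sequences $\{t_n\} \subset [0,\infty)$ and $R_n \to \infty$ (the latter obtained by diagonalization, applying the failure of the conclusion with $R_0 = n$) such that
\[
\|u(t_n)_{[\lambda(t_n)]}\|_{\dot{H}_\gamma^1(|x|\geq R_n)}^2 + \|u(t_n)_{[\lambda(t_n)]}\|_{L^{2^\ast}(|x|\geq R_n)}^{2^\ast} > \varepsilon_0
\]
for every $n$. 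By the precompactness assumption, after passing to a subsequence there exists $f \in \dot{H}_\text{rad}^1(\R^d)$ such that $u(t_n)_{[\lambda(t_n)]} \to f$ in $\dot{H}^1(\R^d)$.

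Next I would upgrade the mode of convergence. The Sobolev embedding $\dot{H}^1(\R^d) \hookrightarrow L^{2^\ast}(\R^d)$ gives $u(t_n)_{[\lambda(t_n)]} \to f$ in $L^{2^\ast}(\R^d)$, and Lemma \ref{Hardy inequality} applied to the difference yields $|x|^{-1}(u(t_n)_{[\lambda(t_n)]} - f) \to 0$ in $L^2(\R^d)$. Combining the two, $u(t_n)_{[\lambda(t_n)]} \to f$ in $\dot{H}_\gamma^1(\R^d)$ as well. Now fix any $R > 0$. Since $\gamma > 0$, both of the localized quantities under consideration are monotone in the domain, so for all $n$ large enough that $R_n \geq R$,
\[
\varepsilon_0 < \|u(t_n)_{[\lambda(t_n)]}\|_{\dot{H}_\gamma^1(|x|\geq R)}^2 + \|u(t_n)_{[\lambda(t_n)]}\|_{L^{2^\ast}(|x|\geq R)}^{2^\ast},
\]
and taking $n \to \infty$ using the strong convergence just established gives
\[
\varepsilon_0 \leq \|f\|_{\dot{H}_\gamma^1(|x|\geq R)}^2 + \|f\|_{L^{2^\ast}(|x|\geq R)}^{2^\ast}.
\]

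Finally, since $f \in \dot{H}_\gamma^1(\R^d) \cap L^{2^\ast}(\R^d)$, absolute continuity of the integrals $\int_{|x|\geq R} |\nabla f|^2 + \gamma |f|^2/|x|^2\,dx$ and $\int_{|x|\geq R} |f|^{2^\ast}\,dx$ forces the right-hand side to tend to $0$ as $R \to \infty$, contradicting $\varepsilon_0 > 0$. There is no substantive obstacle in this argument; it is a standard application of compactness, and the only point requiring care is the simultaneous handling of the $\dot{H}_\gamma^1$ tail (which needs the Hardy inequality to upgrade $\dot H^1$-convergence to $\dot H^1_\gamma$-convergence) and the $L^{2^\ast}$ tail (which needs the Sobolev embedding).
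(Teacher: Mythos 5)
Your argument is correct and amounts to filling in the standard details behind the paper's one-line proof, which simply asserts that the lemma follows from precompactness (citing Arzel\`a--Ascoli). The contradiction extraction, the upgrade from $\dot H^1$-convergence to $\dot H^1_\gamma$-convergence via the Hardy inequality (equivalently, by Lemma~\ref{Equivalence of Sobolev norm} since $\gamma>0$), the monotonicity of the tails, and the final appeal to absolute continuity of the integrals of $|\nabla f|^2+\gamma|f|^2/|x|^2$ and $|f|^{2^\ast}$ are all exactly the ingredients one needs, so this is the same approach worked out in full.
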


\begin{proof}
This lemma follows from the Arzel\'e-Ascoli theorem.
\end{proof}

\begin{lemma}[Order of $\lambda(t)$]\label{Order of lambda}
Assume that the same conditions with Lemma \ref{Concentration on the origin}.
Then, $\lambda(t)$ satisfies
\begin{align*}
	\lim_{t \rightarrow \infty}\lambda(t)\sqrt{t}
		= \infty.
\end{align*}
\end{lemma}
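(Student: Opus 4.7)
The plan is to argue by contradiction, using the precompact orbit together with the scaling invariance of \eqref{NLS} and Theorem~\ref{Stability}. Suppose there exist $M>0$ and $t_n\to\infty$ with $\lambda(t_n)^2 t_n\le M^2$; in particular $\lambda(t_n)\to 0$, and after a subsequence $\tau_n:=\lambda(t_n)^2 t_n\to L\in[0,M^2]$. Introduce the rescaled sequence
\[
v_n(\tau,y):=\lambda(t_n)^{-\frac{d-2}{2}}\,u\!\left(t_n+\tfrac{\tau}{\lambda(t_n)^2},\ \tfrac{y}{\lambda(t_n)}\right),
\]
which, because $\gamma|x|^{-2}$ has the same scaling as $\Delta$, is a radial solution of \eqref{NLS} with $v_n(0)=u(t_n)_{[\lambda(t_n)]}$ and $v_n(-\tau_n)=u(0)_{[\lambda(t_n)]}$.

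By the precompactness of $\{u(t)_{[\lambda(t)]}\}$ and the scaling invariance of both $E_\gamma$ and $\|\cdot\|_{\dot{H}_\gamma^1}$, a further subsequence yields $v_n(0)\to\phi$ strongly in $\dot{H}_\gamma^1$ with $E_\gamma[\phi]=E_0[W_0]$ and $\|\phi\|_{\dot{H}_\gamma^1}\le\|W_0\|_{\dot{H}^1}$. Proposition~\ref{Sharp Sobolev embedding} rules out equality (no non-trivial optimiser for $\gamma>0$), so $\phi\in BW_+$, and Lemma~\ref{Global existence} produces the global radial solution $v$ of \eqref{NLS} with $v(0)=\phi$. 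Applying Theorem~\ref{Stability} on the compact interval $[-L-1,0]$, with $v$ as reference and the smallness input $\|v_n(0)-v(0)\|_{\dot{H}_\gamma^1}\to 0$, gives $v_n\to v$ in $C([-L-1,0];\dot{H}_\gamma^1)$; uniform continuity together with $-\tau_n\to -L$ then gives $v_n(-\tau_n)\to v(-L)$ strongly in $\dot{H}_\gamma^1$.

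The main technical step is to show that the same sequence $v_n(-\tau_n)=u(0)_{[\lambda(t_n)]}$ converges weakly to $0$ in $\dot{H}^1$. It is bounded with $\|u(0)_{[\lambda(t_n)]}\|_{\dot{H}^1}=\|u_0\|_{\dot{H}^1}$, and a change of variables gives, for any fixed $R>0$ and $p\in[2,2^*)$,
\[
\|u(0)_{[\lambda(t_n)]}\|_{L^p(B_R)}^p=\lambda(t_n)^{\,d-p(d-2)/2}\!\int_{|y|\le R/\lambda(t_n)}|u_0(y)|^p\,dy,
\]
where the exponent on $\lambda(t_n)$ is positive. Splitting the inner integral at $|y|=R_1$ and applying H\"older to the outer part (using $u_0\in L^{2^*}$), then letting $n\to\infty$ and subsequently $R_1\to\infty$, yields $\|u(0)_{[\lambda(t_n)]}\|_{L^p(B_R)}\to 0$. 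Rellich--Kondrachov compactness then forces the weak $\dot{H}^1$ limit of the bounded sequence to be $0$.

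Combining the strong and weak limits gives $v(-L)\equiv 0$, contradicting $E_\gamma[v(-L)]=E_\gamma[\phi]=E_0[W_0]>0$. The delicate point is the weak-vanishing step: the $L^p_{\mathrm{loc}}$ norms of the rescaled data are a priori only bounded (e.g.\ Hardy gives $\|u(0)_{[\lambda(t_n)]}\|_{L^2(B_R)}\lesssim R\|u_0\|_{\dot{H}^1}$), so one must genuinely exploit the positive scaling exponent together with the $L^{2^*}$ decay of $u_0$ at infinity to extract smallness rather than mere boundedness.
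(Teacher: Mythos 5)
Your proof is correct and takes essentially the same approach as the paper: negate the claim to get $t_n\to\infty$ with $\lambda(t_n)\sqrt{t_n}$ bounded (hence $\lambda(t_n)\to0$), pass to the precompact limit of $u(t_n)_{[\lambda(t_n)]}$, evolve it globally via Lemma~\ref{Global existence}, use continuity of the flow to identify $u(0)_{[\lambda(t_n)]}=v_n(-\tau_n)\to v(-\tau_0)$ strongly, and confront this with the weak vanishing of $u(0)_{[\lambda(t_n)]}$ coming from $\lambda(t_n)\to0$. Your only departure is that you spell out the weak-vanishing step in detail (the positive scaling exponent for $p<2^\ast$ combined with the tail decay of $\|u_0\|_{L^{2^\ast}}$, then a compactness argument), whereas the paper simply asserts that $u(0)_{[\lambda(t_n)]}\rightharpoonup 0$.
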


\begin{proof}
If not, then there exists $\{t_n\} \subset [0,\infty)$ with $t_n \rightarrow \infty$ as $n \rightarrow \infty$ such that $\lambda(t_n)\sqrt{t_n} \longrightarrow \sqrt{\tau_0} < \infty$ as $n \rightarrow \infty$.
We note that $\lambda(t_n) \longrightarrow 0$ as $n \rightarrow \infty$.
Lemma \ref{Precompact of critical solution without global existence} implies that $v_n(0) \longrightarrow v_0$ in $\dot{H}^1(\R^d)$ as $n \rightarrow \infty$ for some $v_0 \in \dot{H}^1(\R^d)$ along a subsequence, where the solution $v_n(\tau)$ to \eqref{NLS} is defined as $v_n(\tau) := u(t_n + \frac{\tau}{\lambda(t_n)^2})_{[\lambda(t_n)]}$.
Together with conservation of energy and Lemma \ref{Invariant sets}, we have
\begin{align*}
	E_\gamma[v(t)]
		= E_0[W_0]
	\ \text{ and }\ 
	\|v_0\|_{\dot{H}_\gamma^1}
		\leq \|W_0\|_{\dot{H}^1}.
\end{align*}
Lemma \ref{Global existence} teaches us that the solution $v$ to \eqref{NLS} with $v(0) = v_0$ exists globally in time.
From the continuity dependence on initial data, we have
\begin{align*}
	u(0)_{[\lambda(t_n)]}
		= v_n(-\lambda(t_n)^2t_n)
		\longrightarrow v(-\tau_0)
\end{align*}
in $\dot{H}^1(\R^d)$ as $n \rightarrow \infty$.
On the other hand, $u(0)_{[\lambda(t_n)]} \xrightharpoonup[]{\hspace{0.4cm}} 0$ weakly in $\dot{H}^1(\R^d)$ as $n \rightarrow \infty$ from $\lambda(t_n) \longrightarrow 0$.
Thus, we get $v(-\tau_0) \equiv 0$, which contradicts $E_\gamma[v_0] = E_0[W_0]$.
\end{proof}

We prove that the soliton-like solution $u$ approaches $W_0$ along a subsequence in the sense of $\delta(t)$.

\begin{lemma}
Let $u$ be the soliton-like solution.
Then, there exists $\{t_n\} \subset [0,\infty)$ with $t_n \longrightarrow \infty$ as $n \rightarrow \infty$ such that $\delta(u(t_n)) \longrightarrow 0$ as $n \rightarrow \infty$.
\end{lemma}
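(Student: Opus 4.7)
Argue by contradiction: suppose there exist $\delta_1>0$ and $T_0\geq 0$ with $\delta(t)\geq \delta_1$ for every $t\geq T_0$. Combining $E_\gamma[u(t)]=E_0[W_0]=\tfrac{1}{d}\|W_0\|_{\dot H^1}^2$ with the definition of $E_\gamma$ produces the algebraic identity
\begin{align*}
K(u(t))
	:= \|u(t)\|_{\dot H_\gamma^1}^2-\|u(t)\|_{L^{2^\ast}}^{2^\ast}
	= \frac{2}{d-2}\delta(t)
	\geq \frac{2\delta_1}{d-2}
\end{align*}
for $t\geq T_0$. The plan is to feed this uniform positive lower bound into the localized virial identity with weight $\mathscr{X}_R$ and then read off a contradiction against an upper bound on $I'_{\mathscr{X}_R}$.

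Apply Proposition \ref{Virial identity} with $w=\mathscr{X}_R$. Since $\mathscr{X}_R(x)=|x|^2$ on $|x|\leq R$, a direct computation shows that the integrand in the formula for $I''_{\mathscr{X}_R}(t)$ restricted to $\{|x|\leq R\}$ is exactly $8(|\nabla u|^2+\gamma|u|^2/|x|^2)-8|u|^{2^\ast}$, whereas on the transition annulus $\{R\leq|x|\leq 3R\}$ the coefficients appearing in $F_2$, $F_3$, and in $\mathscr{X}_R'(|x|)\gamma/|x|^3$ are uniformly controlled thanks to the scaling $\mathscr{X}_R(x)=R^2\mathscr{X}(x/R)$. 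Hence
\begin{align*}
I''_{\mathscr{X}_R}(t)
	\geq 8K(u(t))-C\int_{|x|\geq R}\left(|\nabla u(t)|^2+\frac{|u(t)|^2}{|x|^2}+|u(t)|^{2^\ast}\right)dx
\end{align*}
with a constant $C$ independent of $R$ and $t$. The three norms appearing in the tail are invariant under the scaling $f\mapsto f_{[\lambda]}$, so Lemma \ref{Concentration on the origin}, applied to $u(t)_{[\lambda(t)]}$, provides for any $\varepsilon>0$ a radius $R_0=R_0(\varepsilon)$ such that this tail is $\leq \varepsilon$ whenever $R\geq R_0/\lambda(t)$. Lemma \ref{Order of lambda} is then used to convert the $t$-dependent cutoff scale into a single radius valid on a long time window: given $N>1$, pick $T_1(N)\geq T_0$ with $\lambda(t)\geq N/\sqrt{t}$ for $t\geq T_1$, and set $R:=R_0\sqrt{T}/N$ for a fixed (large) $T\geq T_1$; then $R\geq R_0/\lambda(t)$ throughout $[T_1,T]$, and choosing $\varepsilon$ small relative to $\delta_1$ yields $I''_{\mathscr{X}_R}(t)\geq c\delta_1$ on $[T_1,T]$.

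The complementary Hardy-based bound $|I'_{\mathscr{X}_R}(t)|\lesssim R^2\|u(t)\|_{\dot H^1}^2\lesssim R^2 E_0[W_0]$, which follows from $|\nabla \mathscr{X}_R|\lesssim R$ together with Lemma \ref{Hardy inequality}, then closes the contradiction: integrating the lower bound on $I''_{\mathscr{X}_R}$ over $[T_1,T]$ gives
\begin{align*}
c\delta_1(T-T_1)
	\leq |I'_{\mathscr{X}_R}(T)-I'_{\mathscr{X}_R}(T_1)|
	\leq 2C R^2 E_0[W_0]
	= \frac{2CR_0^2 E_0[W_0]}{N^2}\,T,
\end{align*}
and dividing by $T$ and letting $T\to\infty$ yields $c\delta_1\leq CR_0^2 E_0[W_0]/N^2$. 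Choosing $N$ sufficiently large in terms of $R_0(\varepsilon)$, $\delta_1$, and $E_0[W_0]$ contradicts this inequality. The main obstacle is the uniform-in-time control of the virial error: precompactness supplies smallness of the tail only at the $t$-dependent scale $R_0/\lambda(t)$, and Lemma \ref{Order of lambda} is exactly what is needed to replace this by a single cutoff $R=O(\sqrt{T})$; the matching orders $|I'_{\mathscr{X}_R}|\lesssim R^2=O(T)$ versus $\int_{T_1}^{T}I''_{\mathscr{X}_R}\gtrsim \delta_1 T$ make the contradiction borderline, so the argument is delicate and only produces a contradiction after passing to the limit $N\to\infty$.
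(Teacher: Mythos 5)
Your proof is correct and follows essentially the same route as the paper: localized virial with $\mathscr{X}_R$, the identity $I''_{\mathscr{X}_R}=8(\|u\|_{\dot H_\gamma^1}^2-\|u\|_{L^{2^\ast}}^{2^\ast})+\text{(tail)}=\tfrac{16}{d-2}\delta(t)+\text{(tail)}$, the Hardy-based bound $|I'_{\mathscr{X}_R}|\lesssim R^2$, Lemma \ref{Concentration on the origin} to control the tail at the $\lambda(t)$ scale, and Lemma \ref{Order of lambda} to synchronize $R\sim\sqrt{T}$ with $R_0/\lambda(t)$ on $[T_1,T]$. The only difference is stylistic: you run a contradiction argument from $\delta(t)\geq\delta_1$ and introduce an auxiliary large parameter $N$, whereas the paper directly shows $\limsup_{T\to\infty}\frac{1}{T}\int_0^T\delta(t)\,dt=0$ by taking $R=\sqrt{\varepsilon T}$ with $\varepsilon$ as the small parameter; these are equivalent, and the paper's version handles the double limit slightly more transparently, so the ``borderline'' worry you flag is actually resolved cleanly there.
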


\begin{proof}
We apply Lemma \ref{Precompact of critical solution without global existence} to take $\{\lambda(t)\} \subset (0,\infty)$ satisfying that $\{u(t)_{[\lambda(t)]} : t \in [0,\infty)\}$ is precompact in $\dot{H}^1(\R^d)$.
We define a function
\begin{align*}
	I_{\mathscr{X}_R}(t)
		:= \int_{\R^d}\mathscr{X}_R(x)|u(t,x)|^2dx,
\end{align*}
where $\mathscr{X}_R$ is given as \eqref{126}.
Then, we have
\begin{align*}
	|I_{\mathscr{X}_R}'(t)|
		\lesssim R^2\int_{|x| \leq 3R}\frac{1}{|x|}|u(t,x)||\nabla u(t,x)|dx
		\lesssim R^2\|u(t)\|_{\dot{H}^1}^2
		\lesssim R^2
\end{align*}
for each $R > 0$ by Lemmas \ref{Hardy inequality} and \ref{Coercivity of energy}.
Take any $\varepsilon > 0$.
We set $\sqrt{t}\lambda(t) \geq \frac{R_0(\varepsilon)}{\sqrt{\varepsilon}}$ for each $t \geq t_0 = t_0(\varepsilon)$, where $R_0$ is given in Lemma \ref{Concentration on the origin}.
We note that $t_0$ can be taken by Lemma \ref{Order of lambda}.
If $R = \sqrt{\varepsilon T}$ for $T \geq t_0$, then $\lambda(t)R \geq R_0(\varepsilon)$ holds and hence,
\begin{align}
	I_{\mathscr{X}_R}''(t)
		& = 8\|u(t)\|_{\dot{H}_\gamma^1}^2 - 8\|u(t)\|_{L^{2^\ast}}^{2^\ast} \notag \\
		& \hspace{0.5cm} + 4\int_{|x|\geq R}\left\{\mathscr{X}''\(\frac{|x|}{R}\) - 2\right\}|\nabla u(t,x)|^2dx - \int_{|x|\geq R}(F_{1,R} - 8)|u(t,x)|^{2^\ast}dx \notag \\
		& \hspace{0.5cm} - \int_{R\leq |x|\leq 3R}F_{2,R}|u(t,x)|^2dx + 4\int_{|x| \geq R}\left\{\frac{R}{|x|}\mathscr{X}'\(\frac{|x|}{R}\) - 2\right\}\frac{\gamma}{|x|^2}|u(t,x)|^2dx \label{142} \\
		& \geq 8\|u(t)\|_{\dot{H}_\gamma^1}^2 - 8\|u(t)\|_{L^{2^\ast}}^{2^\ast} - c\left\{\|u(t)\|_{\dot{H}_\gamma^1(|x|\geq R)}^2 + \|u(t)\|_{L^{2^\ast}(|x|\geq R)}^{2^\ast}\right\} \notag \\
		& = 8\|u(t)\|_{\dot{H}_\gamma^1}^2 - 8\|u(t)\|_{L^{2^\ast}}^{2^\ast} - c\left\{\|u(t)_{[\lambda(t)]}\|_{\dot{H}_\gamma^1(|x|\geq \lambda(t)R)}^2 + \|u(t)_{[\lambda(t)]}\|_{L^{2^\ast
}(|x|\geq \lambda(t)R)}^{2^\ast}\right\} \notag \\
		& \geq \frac{16}{d-2}\delta(t) - \varepsilon, \label{146}
\end{align}
where
\begin{align*}
	F_{1,R}(\mathscr{X},|x|)
		& := \frac{4}{d}\left\{\mathscr{X}''\(\frac{|x|}{R}\) + \frac{(d-1)R}{|x|}\mathscr{X}'\(\frac{|x|}{R}\)\right\}, \\
	F_{2,R}(\mathscr{X},|x|)
		& := \frac{1}{R^2}\mathscr{X}^{(4)}\(\frac{|x|}{R}\) + \frac{2(d-1)}{R|x|}\mathscr{X}^{(3)}\(\frac{|x|}{R}\) \\
		& \hspace{1.0cm} + \frac{(d-1)(d-3)}{|x|^2}\mathscr{X}''\(\frac{|x|}{R}\) + \frac{(d-1)(3-d)R}{|x|^3}\mathscr{X}'\(\frac{|x|}{R}\).
\end{align*}
Integrating this inequality over $[t_0,T]$, we have
\begin{align*}
	\int_{t_0}^T \delta(t)dt
		\lesssim I'_{\mathscr{X}_R}(T) - I'_{\mathscr{X}_R}(0) + \varepsilon T
		\lesssim R^2 + \varepsilon T
		\lesssim \varepsilon T.
\end{align*}
Therefore, we obtain
\begin{align*}
	\limsup_{T \rightarrow \infty}\frac{1}{T}\int_0^T \delta(t)dt
		= 0,
\end{align*}
which deduces the desired result.
\end{proof}

We prove that the soliton-like solution $u$ has a distance strictly from $W_0$ in the sense of $\delta$.

\begin{lemma}
Let $u$ be the soliton-like solution.
Then, we have $\inf_{t \in [0,\infty)}\delta(t) > 0$.
\end{lemma}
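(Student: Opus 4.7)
The plan is to argue by contradiction. Suppose $\inf_{t \in [0,\infty)} \delta(t) = 0$; then there exists a sequence $\{t_n\} \subset [0,\infty)$ with $\delta(t_n) \to 0$, and for all $n$ sufficiently large one has $t_n \in I_0$. By Proposition \ref{Modulation}, for such $n$ there exist modulation parameters $(\theta(t_n),\mu(t_n))$ yielding a decomposition
\begin{align*}
u(t_n)_{[\theta(t_n),\mu(t_n)]} = (1+\alpha(t_n))W_0 + v(t_n), \qquad v(t_n) \in H^\perp.
\end{align*}
Lemma \ref{Parameters for modulation} gives $|\alpha(t_n)| + \|v(t_n)\|_{\dot{H}^1} \lesssim |\delta(t_n)| \to 0$, so writing $v_n := u(t_n)_{[\theta(t_n),\mu(t_n)]}$ one obtains $v_n \to W_0$ strongly in $\dot{H}^1(\R^d)$.

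The heart of the argument is the observation that the weighted integral $\int_{\R^d}|f|^2/|x|^2\,dx$ is invariant under the full symmetry $f \mapsto f_{[\theta,\mu]}$; this is immediate from a change of variables, since the weight $|x|^{-2}$ carries exactly the scaling exponent matched to the $\dot{H}^1$-symmetry. Consequently
\begin{align*}
\int_{\R^d}\frac{|u(t_n)|^2}{|x|^2}\,dx = \int_{\R^d}\frac{|v_n|^2}{|x|^2}\,dx.
\end{align*}
Applying Lemma \ref{Hardy inequality} to $v_n - W_0$ gives $\int |v_n - W_0|^2/|x|^2\,dx \lesssim \|v_n - W_0\|_{\dot{H}^1}^2 \to 0$, whence
\begin{align*}
\int_{\R^d}\frac{|u(t_n)|^2}{|x|^2}\,dx \longrightarrow \int_{\R^d}\frac{|W_0|^2}{|x|^2}\,dx > 0.
\end{align*}

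However, Lemma \ref{Parameters for modulation} also provides the opposite information: $\int_{\R^d}\gamma|u(t_n)|^2/|x|^2\,dx \lesssim |\delta(t_n)|^2 \to 0$. Since $\gamma > 0$, this contradicts the strictly positive limit above, and the proof is complete. There is no deep analytic obstacle, only the conceptual point of noticing that the potential term $\int|f|^2/|x|^2\,dx$ is $[\theta,\mu]$-invariant: this promotes the modulation convergence of $u(t_n)$ to $W_0$ into an exact identity at the level of the weighted norm, and it is precisely the sign $\gamma > 0$ that prevents such an identity from coexisting with the critical energy relation $E_\gamma[u(t_n)] = E_0[W_0]$ (equivalently, with the Lemma \ref{Parameters for modulation} bound above).
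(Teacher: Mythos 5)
Your proof is correct, but it takes a route that differs from the paper's in a meaningful way. The paper's proof begins with the precompactness of the soliton orbit $\{u(t)_{[\lambda(t)]}\}$ in $\dot{H}_\gamma^1$: it extracts a subsequential limit $\phi$, then combines the scale-invariance of $\int \gamma|x|^{-2}|\cdot|^2\,dx$ with the bound $\int \gamma|x|^{-2}|u(t_n)|^2\,dx \lesssim |\delta(t_n)|^2$ from Lemma \ref{Parameters for modulation} to force $\int \gamma|x|^{-2}|\phi|^2\,dx = 0$, hence $\phi \equiv 0$, contradicting the scale-invariant energy constraint $E_\gamma[\phi] = E_0[W_0] > 0$. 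You instead bypass precompactness entirely: you exploit that once $|\delta(t_n)|$ is small the modulation decomposition already identifies the scaled profile as $W_0$, so that $u(t_n)_{[\theta(t_n),\mu(t_n)]} \to W_0$ in $\dot{H}^1$; then Hardy's inequality upgrades this to convergence in the weighted $L^2$ space, and scale-invariance of $\int |x|^{-2}|\cdot|^2\,dx$ pins down $\int |x|^{-2}|u(t_n)|^2\,dx \to \int |x|^{-2}|W_0|^2\,dx > 0$, in direct conflict with the modulation bound that forces this quantity to vanish. Both arguments share the same two pivots (the scale-invariance of the potential integral and the modulation bound $\int \gamma|x|^{-2}|u|^2\,dx \lesssim |\delta|^2$), but yours does not use compactness of the orbit or the abstract limit profile $\phi$ at all, and thus is slightly more self-contained and in fact applies to any radial solution lying in $BW_+$, not only the soliton-like one. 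The paper's route has the small advantage of reusing already-established compactness machinery rather than re-invoking Hardy, but your observation that $\int |x|^{-2}|\cdot|^2\,dx$ is exactly invariant under the $\dot{H}^1$-scaling is a clean way to make the contradiction transparent.
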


\begin{proof}
We assume for contradiction that $\inf_{t \in [0,\infty)}\delta(t) = 0$ holds.
Take a sequence $\{t_n\} \subset [0,\infty)$ satisfying $\delta(t_n) \longrightarrow 0$ as $n \rightarrow \infty$.
Since $\{u(t)_{[\lambda(t)]} : t \geq 0\}$ is precompact in $\dot{H}_\gamma^1(\R^d)$, we get
\begin{align*}
	\int_{\R^d}\frac{\gamma}{|x|^2}|u(t_n)_{[\lambda(t_n)]}|^2dx
		\longrightarrow \int_{\R^d}\frac{\gamma}{|x|^2}|\phi(x)|^2dx
\end{align*}
as $n \rightarrow \infty$ along some subsequence of $\{t_n\}$.
On the other hand, it follows from Lemma \ref{Parameters for modulation} that
\begin{align*}
	\[\int_{\R^d}\frac{\gamma}{|x|^2}|[u(t_n,x)]_{[\lambda(t_n)]}|^2dx\]^\frac{1}{2}
		= \[\int_{\R^d}\frac{\gamma}{|x|^2}|u(t_n,x)|^2dx\]^\frac{1}{2}
		\lesssim |\delta(t_n)|
		\longrightarrow 0
\end{align*}
as $n \rightarrow \infty$.
Thus, we obtain $\phi \equiv 0$.
However, this contradicts $E_\gamma[\phi] = E_\gamma[u] = E_0[W_0]$.
\end{proof}

\section{Blow-up}\label{Sec: Blow-up}

In this subsection, we prove blow-up part in Theorem \ref{Main theorem}.

\begin{lemma}\label{Exponential decay}
Let $\gamma > 0$ and $u_0 \in L^2(\R^d) \cap BW_-$.
If the solution $u$ to \eqref{NLS} with \eqref{IC} exists globally in positive time, then there exists $C_1$, $C_2 > 0$ such that
\begin{align}
	\int_t^\infty |\delta(\tau)|d\tau
		\leq C_1 e^{-C_2t} \label{143}
\end{align}
for any $t \in (0,\infty)$.
An analogous result holds for negative time.
\end{lemma}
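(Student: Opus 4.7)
Proof plan. By Lemma \ref{Invariant sets}, $u(t)\in BW_-$ for all $t\geq 0$, so $\delta(t)<0$, and mass is conserved since $u_0\in L^2$. The plan is first to show $\delta(t)\to 0$ as $t\to\infty$, then to derive a differential inequality for $f(t):=\int_t^\infty|\delta(\tau)|d\tau$ that yields exponential decay.

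For the qualitative decay, I would apply the localized virial identity of Proposition \ref{Virial identity} with $w=\mathscr{X}_R$. A computation parallel to \eqref{142}--\eqref{146}, but with the inequality reversed (since here $\delta<0$), yields
\[
I''_{\mathscr{X}_R}(t) \leq \frac{16}{d-2}\delta(t) + E(R),
\]
where $E(R)\leq C(\|u_0\|_{L^2},E_0[W_0])R^{-\kappa}$ is controllable via the radial Sobolev inequality (Lemma \ref{Radial Sobolev inequality}), mass conservation and Hardy (Lemma \ref{Hardy inequality}). Since $|I'_{\mathscr{X}_R}(t)|\lesssim R\|u_0\|_{L^2}\|u(t)\|_{\dot H^1}$ is uniformly bounded in $t$, a persistent lower bound $|\delta(t)|\geq c>0$ would, after integration, force $I'_{\mathscr{X}_R}\to-\infty$; combined with continuity of $\delta$, this shows $|\delta(t)|\to 0$. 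For $t\geq t_\ast$ with $|\delta(t)|<\delta_0$, Proposition \ref{Modulation} then supplies the decomposition $u_{[\theta(t),\mu(t)]}(t)=(1+\alpha(t))W_0+v(t)$, with $|\alpha(t)|\sim|\delta(t)|$ and $|\mu'(t)/\mu(t)|\lesssim \mu(t)^2|\delta(t)|$ from Lemmas \ref{Parameters for modulation} and \ref{Parameters for modulation2}.

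For the quantitative step, I would establish two matched bounds with a scale-matched cutoff radius $R(t)\sim\mu(t)^{-1}$: the first-order bound
\[
|I'_{\mathscr{X}_{R(t)}}(t)| \lesssim |\delta(t)|
\]
via the modulation (using $v\perp iW_0,W_1$ to cancel the leading contributions), and the second-order bound
\[
I''_{\mathscr{X}_{R(t)}}(t) \leq -c|\delta(t)|,
\]
where the error $E(R(t))$ is absorbed into $|\delta(t)|$ itself thanks to the scale matching rather than merely made small. Integrating the second inequality from $t$ to $\infty$ and using the first at both endpoints gives $f(t)\lesssim|\delta(t)|$, so $f'(t)=-|\delta(t)|\lesssim-f(t)$ integrates to $f(t)\lesssim e^{-C_2 t}$, as required. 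The main obstacle is controlling the scale parameter $\mu(t)$ throughout: the critical scaling forces the presence of $\mu(t)$, and the scale-matched cutoff is consistent over the long time intervals one must integrate over only by virtue of the $\mu'/\mu$ bound of Lemma \ref{Parameters for modulation2} --- a complication absent in the intercritical threshold analyses of \cite{MiaMurZhe21, ArdHamIke22, ArdInu22} explicitly alluded to in the remark following Theorem \ref{Main theorem}.
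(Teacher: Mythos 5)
Your overall skeleton (matched first- and second-order localized-virial bounds of size $|\delta(t)|$, then Gr\"onwall on $f(t)=\int_t^\infty|\delta|$) is the right one, and the final step $f\lesssim|\delta|=-f'$ correctly yields the exponential decay. You also correctly identify the scale parameter $\mu(t)$ as the crux. However, there is a genuine gap in how you propose to handle it. You suggest a time-dependent ``scale-matched'' cutoff $R(t)\sim\mu(t)^{-1}$, but Proposition~\ref{Virial identity} computes $I_{\mathscr{X}_R}'$ and $I_{\mathscr{X}_R}''$ with $R$ \emph{fixed}. Once $R=R(t)$, the quantity you call $I''_{\mathscr{X}_{R(t)}}(t)$ is no longer the second time-derivative of $I_{\mathscr{X}_{R(t)}}(t)$: extra terms proportional to $R'(t)$ and $R''(t)$ appear, so the crucial integration step $\int_t^T I''\,ds = I'(T)-I'(t)$ is not available as written, and the Gr\"onwall conclusion does not follow from the bounds you state. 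You gesture at the $\mu'/\mu$ control of Lemma~\ref{Parameters for modulation2} as making the moving cutoff ``consistent,'' but you never compute the new terms or verify that they can be absorbed into $|\delta(t)|$.

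What you are missing is the observation that makes a \emph{fixed} large cutoff work: since $u_0\in L^2$, mass conservation gives
\begin{align*}
	M[u_0]
		\geq \|u(t)\|_{L^2(|x|\leq\mu(t)^{-1})}^2
		= \mu(t)^{-2}\|W_0 + w(t)\|_{L^2(|x|\leq 1)}^2
		\geq \mu(t)^{-2}\bigl(\|W_0\|_{L^2(|x|\leq1)} - c\delta_1\bigr)^2,
\end{align*}
so $\inf\{\mu(t): t\geq0,\ |\delta(t)|<\delta_1\}>0$. This lower bound on $\mu$ means that $R\mu(t)$ is uniformly large once $R$ is fixed large, so the error $A_R(u(t))=A_{R\mu(t)}(W_0+w)$ can be estimated against $|\delta(t)|$ without any moving cutoff at all. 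The regime $|\delta(t)|\geq\delta_1$ is handled separately by the radial Sobolev inequality and mass conservation, again with a fixed $R$. Two smaller points: the preliminary ``qualitative decay $\delta(t)\to0$'' you propose is not needed for this lemma (in the paper it is a \emph{consequence} of \eqref{143}, proved afterwards), and the leading cancellation in $I'_{\mathscr{X}_R}$ comes simply from $\operatorname{Im}(W_0\nabla W_0)=0$ (with $W_0$ real) together with $\|w\|_{\dot H^1}\sim|\delta|$, not from the orthogonality conditions $v\perp iW_0,W_1$ as you suggest.
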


\begin{proof}
We split the proof into two steps.

\textbf{(Step 1).}
There exist $C$, $R_0 > 0$ such that
\begin{align}
	I_{\mathscr{X}_R}'(t)
		\leq CR^2|\delta(t)|
	\ \text{ and }\ 
	I_{\mathscr{X}_R}''(t)
		\leq - \frac{8}{d-2}|\delta(t)| \label{147}
\end{align}
for any $t \geq 0$ and any $R \geq R_0$, where $I_{\mathscr{X}_R}$ is defined in Lemma \ref{Virial identity}.

Using expansion \eqref{145} of $u$, we have
\begin{align*}
	I_{\mathscr{X}_R}'(t)
		& = 2R^2\text{Im}\int_{\R^d}\frac{\overline{w + W_0}}{R\mu(t)}\nabla(w + W_0) \cdot (\nabla \mathscr{X})\(\frac{y}{R\mu(t)}\)dy \\
		& = 2R^2\text{Im}\int_{\R^d}\frac{W_0\nabla w + \overline{w}\nabla W_0 + \overline{w}\nabla w}{R\mu(t)} \cdot (\nabla \mathscr{X})\(\frac{y}{R\mu(t)}\)dy.
\end{align*}
Lemma \ref{Parameters for modulation} deduces that
\begin{align*}
	I_{\mathscr{X}_R}'(t)
		& = 2R^2\text{Im}\int_{|y| \leq 3R\mu(t)}\frac{|y|}{R\mu(t)} \left\{\frac{W_0}{|y|}\nabla w + \frac{\overline{w}}{|y|}\nabla W_0 + \frac{\overline{w}}{|y|}\nabla w\right\} \cdot (\nabla \mathscr{X})\(\frac{y}{R\mu(t)}\)dy \\
		& \lesssim R^2\{\|W_0\|_{\dot{H}^1}\|w\|_{\dot{H}^1} + \|w\|_{\dot{H}^1}^2\}\|\nabla \mathscr{X}\|_{L^\infty} \\
		& \lesssim R^2(\|w\|_{\dot{H}^1}^2 + \|w\|_{\dot{H}^1}) \\
		& \lesssim R^2\{|\delta(t)|^2 + |\delta(t)|\}
		\lesssim R^2|\delta(t)|.
\end{align*}
Using \eqref{142}, we have
\begin{align*}
	I_{\mathscr{X}_R}''(t)
		:= \frac{16}{d-2}\delta(t) + A_R(u(t)).
\end{align*}
We estimate $A_R(u(t))$.
It follows from Lemma \ref{Radial Sobolev inequality} and mass conservation that
\begin{align*}
	A_R(u(t))
		& \leq \frac{c}{R^2}\|u(t)\|_{L^2}^2 + c\,\|u(t)\|_{L^{2^\ast}(|x| \geq R)}^{2^\ast} \\
		& \leq \frac{c}{R^2}\|u(t)\|_{L^2}^2 + \frac{c}{R^\frac{2(d-1)}{d-2}}\|u(t)\|_{L^2}^\frac{2(d-1)}{d-2}\|u(t)\|_{\dot{H}^1}^\frac{2}{d-2} \\
		& \leq \frac{c}{R^2} + \frac{c}{R^\frac{2(d-1)}{d-2}}(-\delta(t) + \|W_0\|_{\dot{H}^1}^2)^\frac{1}{d-2}.
\end{align*}

When $|\delta(t)| \geq \delta_1 > 0$ (which is chosen later), there exists $R_1(\delta_1) > 0$ (independent of $t$) such that
\begin{align*}
	A_R(u(t))
		& \lesssim \(\frac{1}{R^2} + \frac{1}{R^\frac{2(d-1)}{d-2}}\)\delta_1 + \frac{1}{R^\frac{2(d-1)}{d-2}}|\delta(t)|^\frac{1}{d-2} \\
		& \lesssim \(\frac{1}{R^2} + \frac{1}{R^\frac{2(d-1)}{d-2}}\)|\delta(t)| + \frac{1}{R^\frac{2(d-1)}{d-2}}\delta_1^{-\frac{d-3}{d-2}}|\delta(t)|
		\leq \frac{8}{d-2}|\delta(t)|
\end{align*}
for any $R > R_1$.

When $|\delta(t)| < \delta_1 (< \delta_0)$, mass conservation and Lemma \ref{Parameters for modulation} gives us that
\begin{align*}
	M[u_0]
		& = M[u(t)]
		\geq \|u(t)\|_{L^2(|x| \leq \mu(t)^{-1})}^2
		= \mu(t)^{-2}\|u(t)_{[\theta(t),\mu(t)]}\|_{L^2(|x|\leq 1)}^2 \\
		& = \mu(t)^{-2}\|W_0 + w(t)\|_{L^2(|x|\leq 1)}^2
		\geq \mu(t)^{-2}\{\|W_0\|_{L^2(|x| \leq 1)} - \|w(t)\|_{L^2(|x|\leq 1)}\}^2 \\
		& \geq \mu(t)^{-2}\{\|W_0\|_{L^2(|x|\leq 1)} - c\,\|w(t)\|_{\dot{H}^1}\}^2
		\geq \mu(t)^{-2}\{\|W_0\|_{L^2(|x|\leq 1)} - c\delta_1\}^2
\end{align*}
and hence, we have
\begin{align*}
	\inf\{\mu(t) : t \geq 0, |\delta(t)| \leq \delta_1\}
		> 0.
\end{align*}
Since
\begin{align*}
	A_R(W_0)
		= 4\int_{|x| \geq R}\left\{\frac{R}{|x|}\mathscr{X}'\(\frac{|x|}{R}\) - 2\right\}\frac{\gamma}{|x|^2}|W_0(x)|^2dx
		\leq 0,
\end{align*}
we have
\begin{align}
	A_R(u(t))
		& = A_R((W_0 + w)_{[\mu(t)^{-1}]})
		= A_{R\mu(t)}(W_0 + w) \notag \\
		& \leq A_{R\mu(t)}(W_0 + w) - A_{R\mu(t)}(W_0) \notag \\
		\begin{split}\label{148}
		& \lesssim \int_{|x|\geq R\mu(t)}\[|\nabla w|^2 + |\nabla W_0 \cdot \nabla w| + W_0^{2^\ast-1}|w| + |w|^{2^\ast}\]dx \\
		& \hspace{1.0cm} + \frac{1}{(R\mu(t))^2}\int_{R\mu(t) \leq |x| \leq 2R\mu(t)}\[W_0|w| + |w|^2\]dx
		\end{split} \\
		& \lesssim \|w\|_{\dot{H}^1}^2 + \|w\|_{\dot{H}^1}^{2^\ast} + (R\mu(t))^\frac{2-d}{2}\|w\|_{\dot{H}^1} + (R\mu(t))^{-\frac{d+2}{2}}\|w\|_{\dot{H}^1} \notag \\
		& \lesssim (|\delta(t)| + |\delta(t)|^{2^\ast-1} + R^\frac{2-d}{2} + R^{-\frac{d+2}{2}})|\delta(t)|, \notag
\end{align}
where the third term in \eqref{148} is estimated as
\begin{align*}
	\int_{|x|\geq R\mu(t)}W_0^{2^\ast-1}|w|dx
		\lesssim \|xW_0^{2^\ast-1}\|_{L^2(|x|\geq R\mu(t))}\|w\|_{\dot{H}^1}
		\lesssim (R\mu(t))^{-\frac{d+2}{2}}\|w\|_{\dot{H}^1}
\end{align*}
and the second and fifth terms can be estimated similarly.
Then, if we take $\delta_1 > 0$ sufficiently small and $R_2 > 0$ sufficiently large, we obtain
\begin{align*}
	A_R(u(t))
		\leq \frac{8}{d-2}|\delta(t)|
\end{align*}
for any $R \geq R_2$ and $t \geq 0$ with $|\delta(t)| \leq \delta_1$.
Therefore, $I_{\mathscr{X}_R}''(t) \leq - \frac{8}{d-2}|\delta(t)|$ for any $t \geq 0$ and $R \geq R_0 := \max\{R_1,R_2\}$.

\textbf{(Step 2).}
Conclusion

We prove that $I_{\mathscr{X}_R}'(t) > 0$ for given $R \geq R_0$ and $t \geq 0$.
Contrary, we assume that there exists $t_0 \geq 0$ such that $I_{\mathscr{X}_R}'(t_0) \leq 0$.
Since we saw that $I_{\mathscr{X}_R}'$ is strictly decreasing in Step 1, we have $I_{\mathscr{X}_R}'(t) \leq I_{\mathscr{X}_R}'(t_0 + 1) < 0$ for $t \geq t_0 + 1$.
However, this implies that $I_{\mathscr{X}_R}(t) < 0$ for sufficiently large $t$.
Using the fundamental theorem of Calculus and the results in Step 1 \eqref{147}, 
\begin{align*}
	I_{\mathscr{X}_R}'(T) - I_{\mathscr{X}_R}'(t)
		= \int_t^T I_{\mathscr{X}_R}''(s)ds
		\leq - \frac{8}{d-2}\int_t^T |\delta(s)|ds
\end{align*}
and
\begin{align*}
	I_{\mathscr{X}_R}'(T) - I_{\mathscr{X}_R}'(t)
		> - I_{\mathscr{X}_R}'(t)
		\geq - CR^2|\delta(t)|
\end{align*}
for any $T > t > 0$.
Combining these formulas
\begin{align*}
	\int_t^T |\delta(\tau)| d\tau
		\leq CR^2|\delta(t)|
\end{align*}
for fixed $R > R_0$.
Letting $T \rightarrow \infty$, we have
\begin{align*}
	\int_t^\infty |\delta(\tau)| d\tau
		\leq C|\delta(t)|.
\end{align*}
Gr\"onwall's inequality gives us the desired result.
\end{proof}

We prove that time global solutions $u$ belonging to $BW_-$ approaches $W_0$ in the sense of $\delta(t)$.

\begin{lemma}
Suppose that the same assumptions as Lemma \ref{Exponential decay} hold.
Then, we have $\delta(t) \longrightarrow 0$ as $t \rightarrow \infty$.
\end{lemma}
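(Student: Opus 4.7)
The plan is to combine the exponential integral decay $\int_t^\infty|\delta(\tau)|d\tau \leq C_1 e^{-C_2 t}$ of Lemma~\ref{Exponential decay} with a uniform lower bound on the length of any interval on which $|\delta|$ stays above a fixed threshold; a sum over such intervals would then contradict the integrability of $|\delta|$.

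First, extract a subsequence along which $\delta$ vanishes: the mean value theorem applied to Lemma~\ref{Exponential decay} yields $s_n \in [n,n+1]$ with $|\delta(s_n)| \leq C_1 e^{-C_2 n}$, so $\delta(s_n) \to 0$. To upgrade this to the full limit, argue by contradiction and assume $\varepsilon_0 > 0$ and $t_n \to \infty$ satisfy $|\delta(t_n)| \geq 2\varepsilon_0$. Since $u \in C([0,\infty);\dot{H}_\gamma^1(\R^d))$ the function $|\delta|$ is continuous, so the intermediate value theorem, applied to $|\delta|$ between consecutive points of $\{s_n\}$ and $\{t_n\}$, produces disjoint intervals $[a_n,b_n]$ with $a_n\to\infty$, $|\delta(a_n)| = \varepsilon_0/2$, $|\delta(b_n)| = \varepsilon_0$, and $\varepsilon_0/2 \leq |\delta(\tau)| \leq \varepsilon_0$ throughout $[a_n,b_n]$.

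The crux is to show $b_n - a_n \geq \eta$ for some $\eta = \eta(\varepsilon_0) > 0$ independent of $n$. Choosing $\varepsilon_0 < \delta_0$, Proposition~\ref{Modulation} supplies modulation parameters $(\theta(t),\mu(t))$ with $u_{[\theta(t),\mu(t)]} = (1+\alpha(t))W_0 + v(t)$, while Lemmas~\ref{Parameters for modulation} and \ref{Parameters for modulation2} yield $|\alpha(t)| \sim |\delta(t)|$ and $|\alpha'(t)| \lesssim \mu(t)^2|\delta(t)|$. The mass conservation $u_0 \in L^2$ combined with the near-$W_0$ profile of $u(t)$ on $[a_n,b_n]$---essentially the argument used in Step~1 of the proof of Lemma~\ref{Exponential decay} to bound $\mu$---forces $\mu(t)$ into a compact subset of $(0,\infty)$, uniformly on $\bigcup_n[a_n,b_n]$. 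Consequently $|\alpha'(t)| \lesssim |\alpha(t)|$ there, so integrating $|(\log|\alpha|)'| \leq C$ gives $\bigl|\log|\alpha(b_n)| - \log|\alpha(a_n)|\bigr| \leq C(b_n - a_n)$. Because $|\alpha(b_n)|/|\alpha(a_n)| \sim 2$, one concludes $b_n - a_n \geq C^{-1}\log(3/2) =: \eta$.

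Summing then produces $\int_0^\infty|\delta(\tau)|d\tau \geq \sum_n(\varepsilon_0/2)\eta = \infty$, contradicting Lemma~\ref{Exponential decay}. The principal obstacle is the uniform two-sided bound on $\mu(t)$: when $d \geq 5$ one has $W_0 \in L^2$ so that mass conservation directly yields $\mu(t) \sim \|u_0\|_{L^2}/\|W_0\|_{L^2}$, whereas when $d = 3,4$ one has $W_0 \notin L^2$ and must instead exploit that $\|u_{[\theta,\mu]}\|_{L^2(|x|\leq 1)}$ is bounded below by a positive constant from the modulation while equal to $\mu\|u(\cdot)\|_{L^2(|x|\leq 1/\mu)} \leq \mu\|u_0\|_{L^2}$, producing a lower bound on $\mu$; a complementary upper bound comes from inserting the modulation into the local mass on $\{|x|\leq 1/\mu\}$ and using the logarithmic/polynomial growth of $\|W_0\|_{L^2(|x|\leq R)}$.
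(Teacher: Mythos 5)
The overall architecture of your argument—use the exponential integral decay to produce times where $\delta$ is small, assume for contradiction that $\delta \not\to 0$, isolate intervals on which $|\delta|$ runs through a fixed window, and then use the modulation ODE $|\alpha'| \lesssim \mu^2|\delta|$ together with $|\alpha|\sim|\delta|$ to derive a contradiction—is the same as the paper's. However, the argument hinges on a uniform two-sided bound on $\mu(t)$ over $\bigcup_n[a_n,b_n]$, and this is exactly where the proposal has a genuine gap: Step~1 of the proof of Lemma~\ref{Exponential decay} (which you invoke) only yields a \emph{lower} bound on $\mu(t)$ when $|\delta(t)|<\delta_1$, not the upper bound you need to make $|\alpha'|\lesssim|\alpha|$ and hence conclude $b_n-a_n\gtrsim 1$. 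Your two suggested routes to the upper bound both fail. For $d\geq 5$, writing $\mu^2\|u_0\|_{L^2}^2 = \|W_0+w\|_{L^2}^2$ does not pin down $\mu$, because $\|w\|_{L^2}$ is \emph{not} controlled by $\|w\|_{\dot{H}^1}$ in this scale-critical setting; $w$ can be a low-frequency bubble with small $\dot{H}^1$ norm and arbitrarily large $L^2$ norm, so $\mu$ can be arbitrarily large while $|\delta|$ stays small. For $d=3,4$, the inequality $\mu\|u\|_{L^2(|x|\leq 1/\mu)} = \|u_{[\theta,\mu]}\|_{L^2(|x|\leq 1)}\lesssim 1$ does not bound $\mu$ from above because $\|u\|_{L^2(|x|\leq 1/\mu)}$ is not bounded below (the local mass near the origin can be tiny), so no contradiction with $\mu\to\infty$ arises.

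The paper circumvents this by choosing the interval endpoints more carefully: it sets $t_n$ to be times where $\delta(t_n)\to 0$ (available from the exponential decay), so that $u(t_n)_{[\theta(t_n),\mu(t_n)]}\to W_0$ \emph{strongly} in $\dot{H}^1$, and it uses this genuine convergence (not just smallness of $|\delta|$) to show $\|u(t_n)\|_{L^{2^\ast}(|x|\geq\varepsilon)}\to 0$ whenever $\mu(t_n)\to\infty$, which forces the localized virial quantity $I_{\mathscr{X}_R}(t_n)\to 0$; this contradicts the monotonicity $I_{\mathscr{X}_R}'>0$ established in Step~2 of Lemma~\ref{Exponential decay}. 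It then \emph{propagates} the resulting bound $\sup_n\mu(t_n)<\infty$ over the whole exit-time interval $[t_n,s_n]$ via the fundamental theorem of calculus and the bound $\bigl|\tfrac{d}{dt}\mu(t)^{-2}\bigr|\lesssim|\delta(t)|$, integrated against the already-known exponential integrability of $|\delta|$. Because your intervals start at $|\delta(a_n)|=\varepsilon_0/2$ (a fixed positive level) rather than at a point where $\delta\to 0$, neither the convergence-to-$W_0$ argument nor the propagation step is directly available, and the $\mu$-control must be re-derived. Either adopt the paper's choice of endpoints or supply a genuinely new mechanism for bounding $\mu$ when $|\delta|$ is merely small but bounded away from zero.
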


\begin{proof}
From \eqref{143}, there exists a sequence $\{t_n\}$ such that $t_n \longrightarrow \infty$ and $\delta(t_n) \longrightarrow 0$ as $n \rightarrow \infty$ as $n \rightarrow \infty$.
To use contradiction, we take a sequence $\{s_n\}$ satisfying that
\begin{align*}
	t_n < s_n, \quad
	- \delta(s_n) = \varepsilon_0, \quad
	- \delta(t) < \varepsilon_0\ \text{ for any }t \in [t_n,s_n)
\end{align*}
for some $\varepsilon_0 \in (0,\delta_0)$.
We check that $\lim_{t \rightarrow \infty}\delta(t) = 0$ under the assumption
\begin{align}
	\sup_{n \in \N}\sup_{t \in [t_n,s_n]}\mu(t)
		=: \sup_{n \in \N}\mu(b_n)
		< \infty. \label{144}
\end{align}
It follows form the fundamental of Calculus, Lemma \ref{Parameters for modulation2}, \eqref{144}, and Lemma \ref{Exponential decay} that
\begin{align*}
	|\alpha(s_n) - \alpha(t_n)|
		\leq \int_{t_n}^{s_n}|\alpha'(t)|dt
		\lesssim \int_{t_n}^{s_n} \mu(t)^2|\delta(t)|dt
		\lesssim \int_{t_n}^\infty |\delta(t)|dt
		\longrightarrow 0.
\end{align*}
On the other hand, we have
\begin{align*}
	\lim_{n \rightarrow \infty}|\alpha(s_n) - \alpha(t_n)|
		= \lim_{n \rightarrow \infty}|\alpha(s_n)|
		= O(\varepsilon_0)
\end{align*}
from the construction of $\{t_n\}$ and $\{s_n\}$.
Thus, it suffices to prove \eqref{144} and we devote the rest of proof to it. 

We first show that $\sup_{n \in \N}\mu(t_n) < \infty$.
We assume for contradiction that $\mu(t_n) \longrightarrow \infty$ as $n \rightarrow \infty$.
It follows from Lemma \ref{Parameters for modulation} that
\begin{align*}
	\|w(t_n)\|_{\dot{H}^1}
		\sim |\alpha(t_n)|
		\sim |\delta(t_n)|
		\longrightarrow 0
\end{align*}
and hence, we have $u(t_n)_{[\theta(t_n),\mu(t_n)]} \longrightarrow W_0$ in $\dot{H}^1(\R^d)$.
This limit implies that
\begin{align*}
	\|u(t_n)\|_{L^{2^\ast}(|x| \geq \varepsilon)}^{2^\ast}
		= \|u(t_n)_{[\theta(t_n),\mu(t_n)]}\|_{L^{2^\ast}(|x| \geq \mu(t_n)\varepsilon)}^{2^\ast}
			\longrightarrow 0
\end{align*}
as $n \rightarrow \infty$ for each $\varepsilon > 0$.
Combining this limit and H\"older inequality, we get
\begin{align*}
	I_{\mathscr{X}_R}(t_n)
		& = \int_{|x| \geq \varepsilon}\mathscr{X}_R(x)|u(t_n,x)|^2dx + \int_{|x| \leq \varepsilon}\mathscr{X}_R(x)|u(t_n,x)|^2dx \\
		& \lesssim R^4\|u(t_n)\|_{L^{2^\ast}(|x|\geq \varepsilon)}^2 + o_\varepsilon(1)
		\longrightarrow 0
\end{align*}
as $n \rightarrow \infty$ and $\varepsilon \rightarrow 0$.
Recalling $I_{\mathscr{X}_R}'(t) > 0$, $I_{\mathscr{X}_R}(t) < 0$ holds for $t \geq 0$, which is a contradiction.
Applying the fundamental theorem of Calculus, Lemmas \ref{Parameters for modulation2} and \ref{Exponential decay}, we have
\begin{align*}
	\left|\frac{1}{\mu(t_n)^2} - \frac{1}{\mu(b_n)^2}\right|
		\lesssim \int_{t_n}^{s_n}\left|\frac{\mu'(t)}{\mu(t)^3}\right|dt
		\lesssim \int_{t_n}^\infty|\delta(t)|dt
		\longrightarrow 0
\end{align*}
as $n \rightarrow \infty$.
Together with $\sup_{n \in \N}\mu(t_n) < \infty$, we obtain \eqref{144}.
\end{proof}

We prove that time global solutions $u$ belonging to $BW_-$ has a distance from $W_0$ in the sense of $\delta(t)$.

\begin{lemma}
Then, we have $\inf_{t \in [0,\infty)}|\delta(t)| > 0$.
\end{lemma}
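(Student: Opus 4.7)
The plan is to argue by contradiction in the same spirit as the soliton-like analogue from Section \ref{Sec: Scattering}, replacing the role of precompactness of $\{u(t)_{[\lambda(t)]}\}$ with two structural features of the functional $f\mapsto\int_{\R^d}\gamma|x|^{-2}|f(x)|^2\,dx$: its scale invariance under $f\mapsto f_{[\theta,\mu]}$ and, via Hardy's inequality (Lemma \ref{Hardy inequality}), its continuity on $\dot{H}^1(\R^d)$.

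Suppose for contradiction that $\inf_{t\in[0,\infty)}|\delta(t)|=0$. Lemma \ref{Invariant sets}(2) forces $u(t)\in BW_-$ and hence $|\delta(t)|>0$ for every $t\geq 0$, so any sequence $\{t_n\}$ with $|\delta(t_n)|\to 0$ must satisfy $t_n\to\infty$: otherwise a subsequential limit $t_*$ would yield $\delta(t_*)=0$ by continuity, contradicting $u(t_*)\in BW_-$. For $n$ large one has $t_n\in I_0$, so Proposition \ref{Modulation} produces parameters $(\theta(t_n),\mu(t_n))$ with $u(t_n)_{[\theta(t_n),\mu(t_n)]}=(1+\alpha(t_n))W_0+v(t_n)$, and Lemma \ref{Parameters for modulation} yields
\[
|\alpha(t_n)|+\|v(t_n)\|_{\dot{H}^1}+\Big[\int_{\R^d}\frac{\gamma}{|x|^2}|u(t_n,x)|^2\,dx\Big]^{1/2}\sim|\delta(t_n)|\longrightarrow 0.
\]
In particular, $u(t_n)_{[\theta(t_n),\mu(t_n)]}\to W_0$ strongly in $\dot{H}^1(\R^d)$.

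The next step is to evaluate $\int_{\R^d}\gamma|x|^{-2}|u(t_n)_{[\theta(t_n),\mu(t_n)]}(x)|^2\,dx$ in two different ways. Hardy's inequality makes $f\mapsto|x|^{-1}f$ bounded and linear from $\dot{H}^1(\R^d)$ into $L^2(\R^d)$, so the strong $\dot{H}^1$ convergence to $W_0$ upgrades to strong convergence in the weighted space $L^2(\gamma|x|^{-2}\,dx)$, giving
\[
\lim_{n\to\infty}\int_{\R^d}\frac{\gamma}{|x|^2}|u(t_n)_{[\theta(t_n),\mu(t_n)]}(x)|^2\,dx=\int_{\R^d}\frac{\gamma}{|x|^2}|W_0(x)|^2\,dx>0,
\]
which is strictly positive since $\gamma>0$ and $W_0\not\equiv 0$. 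On the other hand, a direct change of variables shows that the functional $f\mapsto\int_{\R^d}|x|^{-2}|f(x)|^2\,dx$ is invariant under the action $f\mapsto f_{[\theta,\mu]}$, so this same quantity equals $\int_{\R^d}\gamma|x|^{-2}|u(t_n,x)|^2\,dx$, which tends to $0$ by the modulation estimate above. The two computed limits are incompatible, which is the desired contradiction.

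The only delicate point is the upgrade from $\dot{H}^1$ convergence to convergence of the weighted $L^2$ norm, but Hardy's inequality makes this linear and immediate. Everything else is modulation bookkeeping already packaged in Lemmas \ref{Invariant sets} and \ref{Parameters for modulation} and in Proposition \ref{Modulation}. Notably, the argument itself invokes neither the $L^2$ hypothesis nor the exponential decay of Lemma \ref{Exponential decay}, so this lemma combines cleanly with the preceding one (which established $\delta(t)\to 0$) to rule out global-in-time existence of solutions in $BW_-\cap L^2(\R^d)$.
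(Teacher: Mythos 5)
Your proof is correct and rests on the same key mechanism the paper uses: the scale invariance of $f\mapsto\int\gamma|x|^{-2}|f|^2\,dx$ combined with Hardy's inequality and the modulation bounds from Lemma \ref{Parameters for modulation} show that $\int\gamma|x|^{-2}|u(t)|^2\,dx$ is simultaneously $\lesssim|\delta(t)|^2$ and bounded below (when $u_{[\theta,\mu]}$ is near $W_0$) by a fixed positive quantity. The paper packages this as a direct estimate $\int_{|x|\le 1}|W_0|^2\,dx\lesssim|\delta(t)|^2$ on $I_0$ rather than as a contradiction along a sequence, and uses $\int_{|x|\le 1}|W_0|^2$ rather than $\int\gamma|x|^{-2}|W_0|^2$ as the positive witness, but these are only cosmetic differences.
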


\begin{proof}
It suffices to prove that $\inf_{t \in I_0}|\delta(t)| > 0$.
Then, we recall the expansion in Proposition \ref{Modulation}
\begin{align*}
	u(t)_{[\theta(t),\mu(t)]}
		= (1 + \alpha(t))W_0 + v(t),
	\quad t \in I_0.
\end{align*}
Then, Lemma \ref{Parameters for modulation} deduces that
\begin{align*}
	\int_{|x| \leq 1}|W_0(x)|^2dx
		& \lesssim \int_{|x| \leq 1}(|u_{[\theta(t),\mu(t)]}(t,x)|^2 + |\alpha(t)|^2|W_0(x)|^2 + |v(t,x)|^2)dx \\
		& \lesssim \int_{\R^d}\frac{\gamma}{|x|^2}(|u(t,x)|^2 + |v(t,x)|^2)dx + |\alpha(t)|^2 \\
		& \lesssim \int_{\R^d}\frac{\gamma}{|x|^2}|u(t,x)|^2dx + \|v(t)\|_{\dot{H}^1}^2 + |\alpha(t)|^2
		\lesssim |\delta(t)|^2.
\end{align*}
\end{proof}

\subsection*{Acknowledgements}
M.H. is supported by JSPS KAKENHI Grant Number JP22J00787.
M.I. is supported by JSPS KAKENHI Grant Number JP19K14581 and JST CREST Grant Number JPMJCR1913.

\end{document}